\def\d{{\partial}}
\def\C{{\mathbb C}}
\def\H{\mathbb{H}}
\def\A{{\mathbb A}}
\def\Z{\mathbb {Z}}
\def\N{\mathbb {N}}
\def\RR{{\mathbb R}}
\def\R{{\RR}}
\def\O{\mathcal{O}}
\def\e{{\varepsilon}}
\def\trait (#1) (#2) (#3){\vrule width #1pt height #2pt depth #3pt}
\def\fin{\hfill\trait (0.1) (5) (0) \trait (5) (0.1) (0) \kern-5pt 
\trait (5) (5) (-4.9) \trait (0.1) (5) (0)}
\def\Re{{\rm Re\,}}
\newcommand{\ch}{{\mbox{\rm ch}\,}}
\newcommand{\sh}{{\mbox{\rm sh}\,}}
\newcommand{\argsh}{\mathop{\mbox{argsh}\,}}
\def\th{{\rm th\,}}
\def\div{{\rm\, div\,}}
\def\<{\left\langle}
\def\>{\right\rangle}
\newtheorem{prop}{Proposition}[section]
\newtheorem{thm}[prop]{Theorem}
\newtheorem{lem}[prop]{Lemma}
\newtheorem{cor}[prop]{Corollary}
\newtheorem{rem}[prop]{Remark}
\begin{document}
\title[\sf Decomposition theorem and Riesz basis for Axisymmetric potentials]{Decomposition theorem and Riesz basis for Axisymmetric potentials in the right half-plane.}
\author[S. CHAABI, S. RIGAT]{SLAH CHAABI, STEPHANE RIGAT}

\address{S. Chaabi, Aix-Marseille Universit\'e, I2M, UMR 7373, CMI, 39 rue Fr\'ed\'eric Joliot-Curie 13453 Marseille, France}
\email{slah.chaabi@univ-amu.fr}
\address{S. Rigat, Aix-Marseille Universit\'e, I2M, UMR 7373, CMI, 39 rue Fr\'ed\'eric Joliot-Curie 13453 Marseille, France}
\email{stephane.rigat@univ-amu.fr}
\date{\today}
\keywords{Axially symmetric solutions, Fundamental solutions, Riesz Basis, Elliptic functions and integrals.}
\subjclass[2010]{35B07, 33E05, 35J15, 35E05, 42C15}
\thanks{{\it Acknowledgements.} Both of the authors wish to thank Laurent Baratchart and Alexander Borichev for very useful discussions and remarks on a preliminary version of this paper.}
\begin{abstract} {The Weinstein equation with complex coefficients is the equation governing generalized axisymmetric potentials (GASP) which can be written as $L_m[u]=\Delta u+\left(m/x\right)\partial_x u =0$, where $m\in\mathbb{C}$. We generalize results known for $m\in\mathbb{R}$ to $m\in\mathbb{C}$. We give explicit expressions of fundamental solutions for Weinstein operators and their estimates near singularities, then we prove a Green's formula for GASP in the right half-plane $\mathbb{H}^+$ for Re $m<1$. We establish a new decomposition theorem for the GASP in any annular domains for $m\in\mathbb{C}$, which is in fact a generalization of the B\^ocher's decomposition theorem. In particular, using bipolar coordinates, we  prove for annuli that a family of solutions for GASP equation in terms of associated Legendre functions of first and second kind is  complete. For $m\in\mathbb{C}$, we show that this family is even a Riesz basis in some non-concentric circular annulus. }
\end{abstract}
\maketitle

\section{Introduction} 

In this article, we study the Weinstein differential operator
$$
L_m=\Delta+\frac m x\frac\d{\d x}
$$
with $m\in\C$, well-defined on the right half-plane $\H^+=\{(x,y)\in\R^2,\ x>0\}=\{z\in\C, \ {\rm Re}\,z>0\}$. This class of operators is also called operators governing axisymmetric potentials, they have been studied quite intensively in cases $m\in\N$ or $m\in\R$ in \cite{weinstein1948,weinstein1948ii,weinstein1948iii,weinstein1949,weinstein1951,weinstein1953,weinstein1954i,weinstein1954ii,weinstein1954iii,weinstein1955,weinstein1955i,weinstein1956,weinstein1960,weinstein1962,weinstein1965,weinacht1964,weinacht1965,weinacht1968,weinacht1974,copson1953,copson1970,copson1975,huber1953,huber1954,huber1956,erdelyi1956,erdelyi1965,erdelyi1965i,brelot1972,brelot1973,brelot1976,brelot1978,gilbert1962,gilbert1963,gilbert1963i,gilbert1964,liu2000}.
In this paper, we focus exclusively on case $m\in\C$ and some results for integer values of $m$ is recalled.
The Weinstein equation is written
\begin{equation}\label{WeinsteinEquation}
L_mu=0.
\end{equation}
In the sequel, the sense in which the solutions are studied will be specified.
We will also look at solutions to the equation in the sense of distributions
$$
L_mu=\delta_{(x,y)},
$$
where $\delta_{(x,y)}$ denotes the Dirac mass at $(x,y)\in\H^+$.

Here, we deliberately restrict to the case of dimension 2, but many results can be extended directly to the case of higher dimension, {\it i.e.} for the operators
$$
\sum_{k=1}^n\d_{x_i^2}+{m\over x_n}{\d_{x_n}}
$$
on the half-space $\H_n^+=\{x\in\R^n,\ x_n>0\}$.
%

This is Weinstein who was the first to introduce this class of operators in 1948 in \cite{weinstein1948iii}, he studied the case where $m\in\N^*$. In particular, he obtained the following mean-value formula for axisymmetric  potentials which can be extended continuously to $\overline{\H}_+$
$$
u(0,0)\int_{-{\pi\over2}}^{\pi\over2}\sin^m\theta d\theta=\int_{-{\pi\over2}}^{\pi\over2}u(re^{i\theta})\sin^m\theta d\theta\eqno{(PM)}
$$ 
and he first gave an expression of a fundamental solution in terms of Bessel functions and then in terms of elliptic integrals where the singular point is taken on the $y$-axis. He also established the link between the axisymmetric potentials for $m\in\N^*$ and the harmonic functions on $\R^{m+2}$, that we will recall in the proposition
\ref{Weinstein2}.

In \cite{weinstein1951,weinstein1953,weinstein1954}, Weinstein and Diaz-Weinstein established the correspondence principle that we will recall between axisymmetric potentials corresponding to $m$ and those corresponding to $2-m$ (proposition \ref{PrincipeWeinstein}). They deduced an expression of a fundamental solution (where the singular point is taken on the $y$- axis) for $m\in\R$ and they made a link between the Weinstein equation  and Tricomi equations and their fundamental solutions.

In \cite{huber1954}, Huber obtained a Poisson-type formula which generalizes {(PM)}. He also dealt with the extensions of GASP in the whole complex plane. He was also interested in removal properties of singularities for GASP in \cite{huber1956}, and he gave an abstract representation formula of nonnegative GASP that Brelot generalized in \cite{brelot1972,brelot1978}.

Moreover, Vekua gave means to express fundamental solutions of elliptic equations with analytic coefficients by using the Riemann functions, introduced in the past (see eg \cite{garabedian}) in the real hyperbolic context, he generalized to elliptic equations through the complex operators $\d_z$ et $\d_{\overline z}$ in \cite{vekua}.
In heuristic words, in the same way that we can say a harmonic function is the real part of a holomorphic function, or the sum of a holomorphic and an anti-holomorphic function, Vekua expressed the fact that solutions of elliptic equations, and therefore especially GASP, are written as a sum of two functional, one applied to an arbitrary holomorphic function and functiun applied to an anti-holomorphic functiun also arbitrary. The functional write can explicitly in terms of Riemann function, which are obtained by using the hypergeometric functions (\cite{vekua}) or using fractional derivations (\cite{copson1953}).
In \cite{henrici1957}, Henrici gave a very interesting introduction to the work of Vekua.

More recently, by using the work of Vekua in \cite{savina2007}, Savina gave a series representation of fundamental solutions for the operator $\hat{L}u=\Delta u+a\d_x u+b\d_y u+cu$ and she studied the convergence of this series. She gave an application of the Helmholtz equation.

In \cite{gilbert1963i}, Gilbert studied the non-homogeneous Weinstein equation $m\geq0$, he gave an integral formula for this class of equations, in particular,  an explicit solution is given  when the second member depends only of one variable.

Some Dirichlet problems can be found in  \cite{liu2000} in \cite{fokas2011} in special geometry ("geometry with separable variable").

Even if some results presented in this paper are known for real values of $m$, we make a totally  self-contained presentation with elementary technics not usually used in the previous quoted papers. For instance, usual arguments involving estimates of hypergeometric integrals are replaced by arguments using Lebesgue dominated convergence theorem. The main result is a decomposition theorem for axisymmetric potentials which is new also for real values of $m$. We obtain a Liouville-type result for the solutions of Weinstein equation on $\H^+$, the interesting side of this result is the fact that there is a lost of strict ellipticity of the Weinstein operator on the boundary of $\H^+$. An application of the decomposition theorem is given by showing that an explicit family of axisymmetric potentials constructed with the introduction of bipolar coordinates is a Riesz basis in a some annulus. {\color{white}{\cite{chaabi}}}

\section{Notations and preliminaries}
\label{sec:notations_generales}

Throughout the following, $\H^+=\{(x,y)\in\R^2,\ x>0\}$ will denote the right half-plane.
All scalar functions will be complex valued. If $\Omega$ is an open set of $\R^n$ with $n\in\N^*$, $\mathcal{D}(\Omega)$ will designate the space of $C^\infty$ functions compactly supported on $\Omega$ and the support of an arbitrary function $f$ defined on $\Omega$ is ${\rm supp }\,f:=\overline{\left\{x\in\Omega,\ f(x)\neq0\right\}}$.

Let $K$ be a compact set of $\Omega$, $\mathcal{D}_K(\Omega)$ is the set of functions $\varphi\in\mathcal{D}(\Omega)$ such that supp $\varphi\subset K$.

The partial derivatives of a differentiable function $u$ on an open set $\Omega\subset\R^n$ will be denoted $\d u\over \d x_i$ or $\d_{x_i}u$, or sometimes $u_{x_i}$ with $i\in\llbracket1,n\rrbracket$ (for $a<b\in\N$, $\llbracket a,b\rrbracket$ denotes the set of all integers between $a$ and $b$).

If $\alpha=(\alpha_1,\ldots,\alpha_n)\in\N^n$ is a multi-index, we will denote
$$
\d^\alpha:=\d_{x_1}^{\alpha_1}\cdots\d_{x_n}^{\alpha_n}={\d^{|\alpha|}\over\d x_1^{\alpha_1}\cdots\d x_n^{\alpha_n}}
$$
with $|\alpha|=\alpha_1+\cdots+\alpha_n$.

It is assumed that the reader is familiar with the terminology of distributions and we refer to \cite{hormander}.

Let $L$ be a linear differential operator on $\Omega$,
$$
L=\sum_{|\alpha|\leq N}a_{\alpha}\d^\alpha
$$
where $N\in\N$, the previous summation is performed on the multi-indices $\alpha$ of length $|\alpha|\leq N$, $a_\alpha$ are $C^\infty(\Omega)$ functions.

By definition, if $T$ is a distribution, $LT$ will be the distribution : $LT=\sum_{|\alpha|\leq N}a_\alpha\d^\alpha T$.

$L^\star$ will designate the adjoint operator of $L$ in the sense of distributions, namely if $T$ is a distributions,
$$
L^\star T=\sum_{|\alpha|\leq N}(-1)^{|\alpha|}\d^\alpha(a_\alpha T).
$$
It is noticed that if $f,\,g\in\mathcal{D}(\Omega)$, we have
$$
\<Lf,g\>=\<f,L^\star g\>.
$$
Let $a\in\Omega$ and $L$ be a differential operator on $\Omega$. {\it A fundamental solution of $L$ on $\Omega$ at $a\in\Omega$} is a distribution $T_a$ such that
$$
LT_a=\delta_a,
$$
where the previous equality is taken in the sense of distributions on $\Omega$.

This equality can be rewritten
$$
\forall\varphi\in\mathcal{D}(\Omega),\qquad\varphi(a)=\<LT_a,\varphi\>=\<T_a,L^\star\varphi\>.
$$
In particular, if $a\in\Omega$ and if $T_a$ is a fundamental solution of $L^\star$ at $a$ on $\Omega$ and if $g\in\mathcal{D}(\Omega)$ is such that
$g=L(\varphi)$ with $\varphi\in\mathcal{D}(\Omega)$, then
$$
\forall a\in\Omega,\qquad\varphi(a)=\<T_a,g\>.
$$
Indeed, we have
$$
\forall a\in\Omega,\qquad\varphi(a)=\<\delta_a,\varphi\>=\<L^*Ta-,\varphi\>=\<T_a,L\varphi\>=\<T_a,g\>.
$$
These fundamental solutions is therefore a good tool for solving  $L\varphi=g$ on $\mathcal{D}(\Omega)$ if $g\in\mathcal{D}(\Omega)$.

\bigskip
If $m\in\N^*$, the Laplacian in $\R^m$ will be denoted $\Delta_m$, or $\Delta$ when $m=2$.
  For $m\in\C$, $L_m$ denotes the {\it Weinstein operator} :  $\forall (x,y)\in\H^+$,
$$
L_m u(x,y) =\Delta u(x,y)+\frac{m}{ x}\frac{\d u}{\d x}(x,y),\quad \text{ where }\quad u\in {C}^2(\H^+).
$$
The following notation will be sometimes used : if $f(x,y)=(f_1(x,y),f_2(x,y))$ is a ${C}^1$ vector function on an open set of $\R^2$ and $\C^2$ valued, then
$$
{\rm div}\,(f) :={\d f_1\over\d x}+{\d f_2\over\d y}.
$$

Similarly, if $f:\R^2\to\C$ is a ${C^1}$ scalar function on an open set of $\R^2$ and $\C$ valued, then
$$
\nabla f:=\left(\frac{\d f}{\d x},\frac{\d f}{\d y}\right).
$$

With these notations, the operator $L_m$ can be written as follows : if $u\in {C}^2(\H^+)$, then
$$
L_m u(x,y)=x^{-m}{\rm div}(x^m\nabla u)(x,y).
$$

It is clear from the Schwarz rule that if  $u$ is a function defined on a connected open set of $\H^+$ such that ${\rm div}(\sigma\nabla u)=0$ where $\sigma:\H^+\to\R^+_*$ is a ${C}^1$ function, then there is a function $v$ which satisfies the well-known generalized Cauchy-Riemann system of equations :
$$
\left\{
\begin{matrix}
\displaystyle{\d v\over\d x}=-\sigma{\d u\over\d y}\cr
\cr
\displaystyle{\d v\over\d y}=\sigma{\d u\over\d x}
\end{matrix}
\right.
$$
and $v$ satisfies the conjugate equation ${\rm div}(\frac1\sigma\nabla v)=0$ (see for exemple \cite{BLRR}). 
This observation justifies the fact that we call $L_{-m}$ with $m\in\C$ the conjugate operator of $L_m$. 

 $L_m^\star$ denotes adjoint operator of $L_m$ : for all $u\in {C}^2(\H^+)$ and for all $(x,y)\in\H^+$,
 $$
L_m^\star u(x,y)=\Delta u(x,y)-{\d \over\d x}\left({m \,u(x,y)\over x}\right)=
\Delta u(x,y)-{m\over x}{\d u\over \d x}(x,y)+{m\over x^2}u(x,y)
$$
This definition is given on $\H^+$ but it is easily transposed to the case of an open set $\Omega$ of $\H^+$.

In the case where the functions involved do not depend only of  $x$ and $y$, we will write $L_{m,x, y}$ instead of $L_{m}$, which means that the partial derivatives are related to the variables $x$ and $y$, and all other variables are considered to be fixed.

 \noindent
If $u\in\mathcal{D}(\H^+)$, we define  $S_m u\in\mathcal{D}(\H^+)$ by
$$
(S_m u)(x,y)=x^{-m}u(x,y).
$$
\noindent
If $u\in\mathcal{D}(\H^+)$, we define $Du\in\mathcal{D}(\H^+)$ by
$$
(Du)(x,y)={\d u\over\d x}(x,y).
$$

These operators satisfy the following proposition :

\noindent
\begin{prop}\label{conjugaison}
 $S_m$ conjugates $L_m^\star$ and $L_m$, $D$ conjugates $L_{-m}^\star$ and $L_m$, which means that
$$
S_mL_m^\star=L_mS_m,\quad  L_{-m}^\star D=DL_m.$$
\end{prop}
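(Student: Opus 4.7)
The plan is to verify each identity by a direct pointwise computation on a test function $u \in C^2(\H^+)$; both relations express that the operators $S_m$ and $D$ intertwine certain Weinstein-type operators, and since all coefficients are smooth on $\H^+$, no delicate distributional argument is needed beyond the one used to define the adjoint in the first place. Once the identities hold on $C^2(\H^+)$ (in particular on $\mathcal{D}(\H^+)$), they transfer to distributions by the usual duality $\langle LT,\varphi\rangle=\langle T,L^\star\varphi\rangle$.

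For the first identity $S_m L_m^\star = L_m S_m$, I would apply $L_m$ to $S_m u=x^{-m}u$ and expand via Leibniz. The only nontrivial piece is $\Delta(x^{-m}u)$: from
\[
\partial_x^2(x^{-m}u) = m(m+1)\,x^{-m-2}u - 2m\,x^{-m-1}\partial_x u + x^{-m}\partial_x^2 u,\qquad \partial_y^2(x^{-m}u)=x^{-m}\partial_y^2 u,
\]
one gets $\Delta(x^{-m}u) = x^{-m}\Delta u - 2m\,x^{-m-1}\partial_x u + m(m+1)\,x^{-m-2}u$. Adding the lower-order contribution $(m/x)\partial_x(x^{-m}u) = -m^2 x^{-m-2}u + m\,x^{-m-1}\partial_x u$ and factoring out $x^{-m}$ yields
\[
L_m(x^{-m}u)=x^{-m}\!\left(\Delta u-\tfrac{m}{x}\partial_x u+\tfrac{m}{x^{2}}u\right)=S_m L_m^\star u,
\]
where the bracket is recognized, by expanding $\partial_x(mu/x)=m\partial_x u/x - mu/x^2$, as the explicit form of $L_m^\star u$ given above.

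For the second identity $L_{-m}^\star D = D L_m$, I would differentiate $L_m u = \Delta u + (m/x)\partial_x u$ in $x$. Since $\Delta$ has constant coefficients, $\partial_x\Delta u = \Delta(\partial_x u)$; the remaining Leibniz calculation gives
\[
\partial_x\!\left(\tfrac{m}{x}\partial_x u\right)=\tfrac{m}{x}\partial_x(\partial_x u)-\tfrac{m}{x^{2}}\partial_x u,
\]
so that $D L_m u = \Delta(Du) + (m/x)\partial_x(Du) - (m/x^2)(Du)$. Since $L_{-m}^\star v = \Delta v - \partial_x(-mv/x) = \Delta v +(m/x)\partial_x v - (m/x^2)v$ (obtained by the same derivation that produced the displayed formula for $L_m^\star$), this is precisely $L_{-m}^\star(Du)$.

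I do not anticipate a serious obstacle; the proof is essentially bookkeeping of signs and of the factors $m$, $m+1$, $m^2$ that appear when differentiating $x^{-m}$ twice. The only place to be careful is checking that the $x^{-m-2}$ coefficient collapses from $m(m+1)-m^2=m$ (giving the $+m/x^{2}$ zeroth-order term of $L_m^\star$) and, symmetrically, the sign of the $(m/x^{2})$ term in $L_{-m}^\star$, which must come out negative to match what $\partial_x$ produces from $(m/x)\partial_x u$.
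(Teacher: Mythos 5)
Your proof is correct and coincides with the paper's approach: the paper simply states ``Straightforward computations,'' which is precisely the direct Leibniz-rule verification you carry out (and your arithmetic checks out, including the key cancellation $m(m+1)-m^2=m$ and the sign of the zeroth-order term in $L_{-m}^\star$).
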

\noindent
\begin{proof}
\noindent
%
%
%
Straightforward computations.
\end{proof}

\bigskip\noindent
\begin{rem}
\begin{enumerate}
\item Let $m\in\C$, $S_m$ and $L_m S_m$ are auto-adjoints operators, {ie.} $S_m=S_m^\star$ and $L_m S_m=(L_m S_m)^\star$.
\item There is a result, which generalizes the first point of this remark about the conjugation of operators $L_m$ and $L_m^\star$. 

Let $\sigma\, :\, \Omega\to\C$ be a ${C}^1$ function which does not vanish, the operator defined on ${C}^2(\Omega)$ by :  for $u\in {C}^2(\Omega)$,
$$
P_\sigma u(x,y)={1\over\sigma(x,y)}{\rm div}\left(\sigma(x,y)\nabla u(x,y)\right),
$$
where $\Omega$ is an open set of $\R^2$.

Then
$$
P_\sigma^\star={\rm div}\left(\sigma\nabla\left({\cdot\over\sigma}\right)\right).
$$
Indeed, if $u,v\in\mathcal{D}(\Omega)$, then we have by using the derivation in the sense of distributions,
\begin{align*}
\langle P_{\sigma}u,v\rangle&=\displaystyle \int_{\Omega}{1\over\sigma(x,y)}{\rm div}\left(\sigma(x,y)\nabla u(x,y)\right)v(x,y)\,dxdy\cr
&=\displaystyle -\int_{\Omega}\sigma\nabla u\cdot\nabla\left({v\over\sigma}\right)\,dxdy\cr
&=\displaystyle \int_{\Omega}u\,{\rm div}\left(\sigma\nabla\left({v\over\sigma}\right)\right)\cr
&=\displaystyle    \langle u,P_{\sigma}^\star v\rangle
\end{align*}

We define $S_\sigma$ the operator such that for $u\in {C}^2(\Omega)$,
$$
(S_\sigma u)(x,y)={1\over \sigma(x,y)}u(x,y).
$$
Thus, $S_\sigma$ conjugates $P_\sigma$ and $P_\sigma^\star$, where $P_\sigma^\star={\rm div}\left(\sigma\nabla\left({\cdot\over\sigma}\right)\right)$ because, in an obviously way, we have $S_\sigma P_\sigma^\star=P_\sigma S_\sigma$.
\end{enumerate}
\end{rem}

If $m$ is a positive integer, we introduce the operator $T_m : u\mapsto v$ defined as follows :

for a function $u$ defined on an open set $\Omega$ of $\H^+$, the function $v$ is defined on $\{x\in\R^{m+2},\ (\sqrt{x_1^2+\cdots+x_{m+1}^2},x_{m+2})\in \Omega\}$ by 
$$
v(x_1,\ldots,x_{m+2})=u(\sqrt{x_1^2+\cdots+x_{m+1}^2},x_{m+2}).
$$

The two following propositions can be found in Weinstein work (\cite{weinstein1953}) and will be useful in the sequel 
(the (short) proofs are just direct computations):

\begin{prop}{\bf (Weinstein principle \cite{weinstein1953})}\label{PrincipeWeinstein}
Let $\Omega$ be a relatively compact open set of $\H^+$, if $u:\Omega\to\C$ is ${C}^2$, then for all $m\in\C$,
$$
L_mu=x^{1-m}L_{2-m}[x^{m-1}u].
$$
\end{prop}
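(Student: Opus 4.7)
The statement is an algebraic identity, so the plan is to prove it by a direct computation — the whole point being to identify which terms cancel. I set $v := x^{m-1} u$ and compute $L_{2-m} v$, then verify it equals $x^{m-1} L_m u$, which rearranges to the claimed formula.

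First I compute the partial derivatives of $v$ in $x$ using the Leibniz rule:
$$
\partial_x v = (m-1)x^{m-2}u + x^{m-1}\partial_x u,
$$
$$
\partial_x^2 v = (m-1)(m-2)x^{m-3}u + 2(m-1)x^{m-2}\partial_x u + x^{m-1}\partial_x^2 u,
$$
while $\partial_y^2 v = x^{m-1}\partial_y^2 u$ since the factor $x^{m-1}$ is $y$-independent. Adding these and then adding $\frac{2-m}{x}\partial_x v$ gives an explicit expression for $L_{2-m} v$ with three types of terms: multiples of $u$, of $\partial_x u$, and of $\Delta u$.

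The key step is the two cancellations. The coefficient of $u$ is
$$
(m-1)(m-2)x^{m-3} + (2-m)(m-1)x^{m-3} = (m-1)x^{m-3}\bigl[(m-2)+(2-m)\bigr] = 0,
$$
and the coefficient of $\partial_x u$ simplifies as
$$
2(m-1)x^{m-2} + (2-m)x^{m-2} = m\, x^{m-2}.
$$
So
$$
L_{2-m}\bigl[x^{m-1} u\bigr] = x^{m-1}\Delta u + m\,x^{m-2}\partial_x u = x^{m-1}\Bigl(\Delta u + \frac{m}{x}\partial_x u\Bigr) = x^{m-1} L_m u.
$$
Multiplying through by $x^{1-m}$ yields the stated identity.

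There is no real obstacle: the proof is a one-screen computation that relies only on the Leibniz rule and a bit of arithmetic in $m$. The only point worth mentioning is that all of this is valid on $\H^+$ since $x>0$ makes $x^{m-1}$ a smooth nonvanishing function there for any $m\in\C$ (using the principal branch, for instance), so differentiation causes no issue, and the hypothesis $u\in C^2$ is exactly what is needed for the computation to make sense pointwise.
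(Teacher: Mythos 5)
Your computation is correct and is exactly the direct verification the paper alludes to (the paper simply remarks that the proof is a ``direct computation'' without writing it out). The cancellation of the $u$-terms and the simplification of the $\partial_x u$-coefficient to $m x^{m-2}$ are both right, and your remark that $x^{m-1}$ is smooth and nonvanishing on $\H^+$ for any $m\in\C$ correctly addresses the only potential subtlety.
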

%
%

\begin{prop} {\bf (\cite{weinstein1948iii})} \label{Weinstein2}
Let $\Omega$ be a relatively compact open set of $\H^+$.  For $u\in{C}^2(\Omega)$and if  $m\in\N$, then $\Delta_{m+2}(T_mu)=T_m(L_mu)$.
\end{prop}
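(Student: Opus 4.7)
The plan is to verify the identity by a direct chain-rule calculation, writing $v = T_m u$ explicitly in terms of the radial variable $r := \sqrt{x_1^2 + \cdots + x_{m+1}^2}$, so that $v(x_1,\ldots,x_{m+2}) = u(r, x_{m+2})$. The ingredients are elementary: $\partial r/\partial x_i = x_i/r$ for $i \in \llbracket 1, m+1\rrbracket$, and the last variable $x_{m+2}$ behaves exactly like the $y$-variable of $u$.

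First I would compute the second derivatives with respect to the ``radial'' variables. For $i \in \llbracket 1, m+1 \rrbracket$, the chain rule gives
\[
\frac{\partial v}{\partial x_i} = \frac{x_i}{r}\, u_x(r, x_{m+2}),
\qquad
\frac{\partial^2 v}{\partial x_i^2} = \frac{x_i^2}{r^2}\, u_{xx}(r, x_{m+2}) + \left(\frac{1}{r} - \frac{x_i^2}{r^3}\right) u_x(r, x_{m+2}).
\]
Summing over $i = 1, \ldots, m+1$ and using $\sum_{i=1}^{m+1} x_i^2 = r^2$, the coefficient of $u_{xx}$ collapses to $1$ and the coefficient of $u_x$ becomes $(m+1)/r - 1/r = m/r$. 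The remaining variable contributes $\partial^2 v/\partial x_{m+2}^2 = u_{yy}(r, x_{m+2})$. Adding everything together yields
\[
\Delta_{m+2} v = u_{xx}(r, x_{m+2}) + u_{yy}(r, x_{m+2}) + \frac{m}{r}\, u_x(r, x_{m+2}) = (L_m u)(r, x_{m+2}),
\]
and the right-hand side is by definition $T_m(L_m u)$ evaluated at $(x_1, \ldots, x_{m+2})$.

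There is essentially no obstacle: the restriction $m \in \mathbb{N}$ is only to ensure that $T_m$ makes geometric sense as a lift to an integer-dimensional Euclidean space $\mathbb{R}^{m+1}$ in the first variables; once that is granted, the computation is just the chain rule together with the identity $\sum (x_i/r)^2 = 1$. The hypothesis that $\Omega$ is relatively compact and $u \in C^2(\Omega)$ is used only to guarantee that all partial derivatives exist and the substitution $x \mapsto r$ is well-defined on the pre-image set.
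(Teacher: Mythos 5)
Your proof is correct and is precisely the direct chain-rule computation that the paper alludes to (the paper states only that "the (short) proofs are just direct computations" without writing it out). Nothing to add.
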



The two previous propositions will allow us to calculate fundamental solutions for $L_m$ and $L_m^\star$ with $m\in\N$ in a first step, and thereafter, for $m\in\Z$. Finally, estimates of these expressions will show that the expressions obtained actually provide fundamental solutions of $L_m$ and $L_m^\star$ with $m\in\C$.

\section{Integral expressions of fundamental solutions for integer values of $m$.}
We recall the definition of the Dirac mass in a point : if $(x,y)\in\R^2$, $\delta_{(x,y)}$ is the distribution defined by
$$
\forall \varphi\in\mathcal{D}(\R^2),\qquad\langle \delta_{(x,y)},\varphi\rangle=\varphi(x,y).
$$
Let $m$ be a positive integer.
\begin{prop}{\bf (partially in \cite{weinstein1954, weinacht1968,weinstein1948iii})}\label{WeinsteinSolFond}
Let $m\in\N^\star$. For $(x,y)\in\H^+$ and $(\xi,\eta)\in\H^+$,
$$
E_m(x,y,\xi,\eta)=-{\xi^m\over2\pi  }\int_{\theta=0}^{\pi}{\sin^{m-1}\theta \, d\theta \over \left[(x-\xi)^2+4x\xi\sin^2\left({\theta\over 2}\right)+(y-\eta)^2\right]^{m/2}}
$$
is a fundamental solution on $\H^+$ for the operator $L^\star_{m,\xi,\eta}$ at the fixed point $(x,y)\in\H^+$, which means that in the sense of distributions, we have $\H^+$ :
$$
L_{m,\xi,\eta}^\star E_m(x,y,\xi,\eta)=\delta_{(x,y)}(\xi,\eta).
$$
Moreover, if $(\xi,\eta)\in\H^+$ is fixed, then in the sense of distributions on $\H^+$, we have
$$
L_{m,x,y}E_m(x,y,\xi,\eta)=\delta_{(\xi,\eta)}(x,y),
$$
which means that $E_m$ is a fundamental solution on $\H^+$ of the operator $L_{m,x,y}$ at the fixed point $(\xi,\eta)\in\H^+$.
\end{prop}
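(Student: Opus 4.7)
The plan is to recognize $E_m$ as a spherical average (in $\R^{m+2}$) of the Newton kernel $|X-Y|^{-m}$, and then transport the classical identity $\Delta_{m+2}|X-Y|^{-m} = -m\omega_{m+1}\delta_Y$ to $\H^+$ via the Weinstein lifting of Proposition \ref{Weinstein2}; here $\omega_k := 2\pi^{(k+1)/2}/\Gamma((k+1)/2)$ denotes the area of the unit sphere $S^k$. Expanding
\[(x-\xi)^2+4x\xi\sin^2(\theta/2)+(y-\eta)^2 \;=\; x^2+\xi^2 - 2x\xi\cos\theta + (y-\eta)^2\]
identifies this quantity as $|X-Y_\theta|^2$ for $X = (x,0,\ldots,0,y)$ and $Y_\theta = (\xi\cos\theta,\,\xi\sin\theta\cdot\sigma,\,\eta)$, with $\sigma$ ranging over $S^{m-1}$. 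Using the polar decomposition $dS = \xi^m\sin^{m-1}\theta\,d\theta\,d\sigma$ on the sphere $\Sigma := S^m_\xi \times \{\eta\} \subset \R^{m+2}$, the defining integral rewrites as
\[E_m(x,y,\xi,\eta) \;=\; -\frac{1}{2\pi\omega_{m-1}}\int_\Sigma |X-Y|^{-m}\,dS(Y),\]
a constant multiple of the single-layer Newton potential of $dS|_\Sigma$. Factoring out the $\xi^m$, the function $F_m := \xi^{-m}E_m$ is manifestly symmetric under the exchange $(x,y)\leftrightarrow(\xi,\eta)$.

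For the identity $L_{m,x,y}E_m = \delta_{(\xi,\eta)}$, I would lift $(x,y)\mapsto E_m(\cdot,\cdot,\xi,\eta)$ to the axisymmetric function $\tilde E(X) := E_m(|X'|,x_{m+2},\xi,\eta)$ on $\R^{m+2}\setminus\Sigma$; rotational invariance about the $x_{m+2}$-axis ensures the sphere-integral representation holds for every such $X$. Classical single-layer potential theory then gives
\[\Delta_{m+2}\tilde E \;=\; \frac{m\omega_{m+1}}{2\pi\omega_{m-1}}\,\delta_\Sigma\]
as a distribution on $\R^{m+2}$, and the elementary identity $\omega_{m+1}/\omega_{m-1} = 2\pi/m$ collapses the constant to $1$. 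Applying Proposition \ref{Weinstein2} in its distributional form, together with the axisymmetric Fubini identity $\int_{\R^{m+2}} f\,dY = \omega_m\int_{\H^+} f(|Y'|,y_{m+2})\,\xi^m\,d\xi\,d\eta$ (valid for axisymmetric $f$), pairing with $T_m\varphi$ for $\varphi\in\mathcal{D}(\H^+)$ yields
\[\omega_m\,\langle L_{m,x,y}E_m,\,x^m\varphi\rangle \;=\; \langle\delta_\Sigma,T_m\varphi\rangle \;=\; \xi^m\omega_m\,\varphi(\xi,\eta);\]
since $\varphi\mapsto x^m\varphi$ is a bijection of $\mathcal{D}(\H^+)$ onto itself, this is equivalent to the desired identity.

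For the adjoint statement $L_{m,\xi,\eta}^\star E_m = \delta_{(x,y)}$, I would invoke Proposition \ref{conjugaison} in the form $L_m^\star u = \xi^m L_m(\xi^{-m}u)$ (a direct consequence of $S_m L_m^\star = L_m S_m$) to reduce the claim to $L_{m,\xi,\eta}F_m(x,y,\cdot,\cdot) = x^{-m}\delta_{(x,y)}$. The symmetry of $F_m$ then lets the identical lifting argument run in the $(\xi,\eta)$ variables, with the sphere $S^m_x\times\{y\}$ in place of $\Sigma$. The main technical subtlety is identifying $\Delta_{m+2}\tilde E$ with the surface measure $\delta_\Sigma$ rather than merely checking that it vanishes off $\Sigma$: this requires the local integrability of $|X-Y|^{-m}$ on $\R^{m+2}$ (which holds since $m < m+2$) together with the classical jump formula for single-layer potentials across the smooth compact hypersurface $\Sigma$.
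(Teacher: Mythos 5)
Your argument is correct and takes a genuinely different route from the paper, so a comparison is warranted.

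Both proofs hinge on the same two structural facts: the lifting relation $\Delta_{m+2}(T_m u)=T_m(L_m u)$ of Proposition \ref{Weinstein2}, and the conjugation $S_m L_m^\star=L_m S_m$ of Proposition \ref{conjugaison} together with the symmetry $\xi^{-m}E_m(x,y,\xi,\eta)=x^{-m}E_m(\xi,\eta,x,y)$. Where they diverge is in how the $\R^{m+2}$-Laplacian is brought in. The paper starts from the classical representation formula $v(t)=-\tfrac{1}{m\omega_{m+2}}\int_{\R^{m+2}}|\tau-t|^{-m}\Delta_{m+2}v(\tau)\,d\tau$ applied to $v=T_m u$, converts the $\R^{m+2}$-integral over the $\tau$-variables to hyperspherical coordinates, integrates out the extra angular variables, and reads off $L_{m,\xi,\eta}^\star E_m=\delta_{(x,y)}$ directly; the second identity then follows by conjugation and symmetry. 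You instead fix $(\xi,\eta)$, recognize $E_m(\cdot,\cdot,\xi,\eta)$ as $-\tfrac{1}{2\pi\omega_{m-1}}$ times the single-layer Newton potential of the sphere $\Sigma=S^m_\xi\times\{\eta\}\subset\R^{m+2}$, lift to $\R^{m+2}$ to obtain $\Delta_{m+2}\tilde E=\delta_\Sigma$ (the constant $m\omega_{m+1}/(2\pi\omega_{m-1})=1$ is right), and push this distributional identity back to $\H^+$ via the axisymmetric Fubini formula, yielding $L_{m,x,y}E_m=\delta_{(\xi,\eta)}$ first; the adjoint identity then follows by the same conjugation and symmetry. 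Each route has its merits: the paper's only needs the Green representation for $\Delta_{m+2}$ and a Jacobian computation, whereas yours makes the singularity structure of $E_m$ transparent by exhibiting it as a single-layer potential. One small optimization on your technical remark: you do not actually need the jump formula for single-layer potentials. The identity $\Delta_{m+2}\tilde E=\delta_\Sigma$ follows more cheaply by pairing against a test function, exchanging the integrals by Fubini (legitimate precisely because $|X-Y|^{-m}$ is locally integrable in $\R^{m+2}$), and applying $\Delta_{m+2}|X-Y|^{-m}=-m\omega_{m+1}\delta_Y$ pointwise in $Y$. Also, the chain $\omega_m\langle L_{m,x,y}E_m,\,x^m\varphi\rangle=\langle\delta_\Sigma,T_m\varphi\rangle$ quietly uses the identity $L_m^\star(x^m\varphi)=x^m L_m\varphi$ (which is Proposition \ref{conjugaison} in disguise) and the adjoint pairing; it would be worth making that explicit, but it is correct.
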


\begin{proof}
Let $m\in\N^*$. We recall that $$
E(x)=-{1\over m\,\omega_{m+2} \|x\|^m}, \quad x\in \R^{m+2},
$$
is a fundamental solution for the Laplacian on $\R^{m+2}$ 
{\it i. e.} that in the sense of distributions, $\Delta_{m+2} E=\delta_0$, where $\omega_{m+2}$ is the area of the unit sphere $\R^{m+2}$. Thus, for all $v\in\mathcal{D}(\R^{m+2})$,
$$
v(t_1,...,t_{m+2})=-{1\over m\,\omega_{m+2}}\int_{\tau\in\R^{m+2}}\Delta_{m+2}v(\tau){d\tau_1 d\tau_2 ... d\tau_{m+2}\over \left((\tau_1-t_1)^2+\cdots+(\tau_{m+2}-t_{m+2})^2\right)^{m/2}}
$$
where $\tau=(\tau_1,...,\tau_{m+2})$.

\noindent
Applying this relation to the function $v=T_mu$ where $u\in\mathcal{D}(\H^+)$ and due to the proposition \ref{Weinstein2}, we have for all $(x,y)\in\H^+$,
$$
u(x,y)=-{1\over m\,\omega_{m+2}}\int_{\R^{m+2}}{(L_m u)(\sqrt{\xi_1^2+\cdots+xi_{m+1}^2},\xi_{m+2})\,d\xi_1\cdots d\xi_{m+2}\over \left((\xi_1-x)^2+\xi_2^2+\cdots+\xi_{m+1}^2+(\xi_{m+2}-y)^2 \right)^{m/2}}
$$

We will simplify this integral expression. For this, we will consider the following  hyper-spherical coordinates :

\begin{align*}
\xi_1&=\xi\cos\theta_1\cr
\xi_2&=\xi\sin\theta_1\cos\theta_2\cr
\vdots\,&=\quad\vdots\cr
\xi_{m-1}&=\xi\sin\theta_1\cdots\sin\theta_{m-2}\cos\theta_{m-1}\cr
\xi_{m}&=\xi\sin\theta_1\cdots\sin\theta_{m-1}\cos\theta_m\cr
\xi_{m+1}&=\xi\sin\theta_1\cdots\sin\theta_{m}
\end{align*}
where $\xi^2=\xi_1^2+\cdots+\xi_{m+1}^2$, $\theta_m\in\left]-\pi,\pi\right[$ and $\theta_1,\ldots,\theta_{m-1}\in\left]0,\pi\right[$. The absolute value of the determinant of the Jacobian matrix defined by this system of coordinates is
$$
\xi^m\sin\theta_{m-1}\sin^{2}\theta_{m-2}\cdots\sin^{m-1}\theta_1
$$
Then we have for all $(x,y)\in\H^+$,
\begin{equation}\label{SoFon}
u(x,y)=\int_{\eta=-\infty}^\infty \int_{\xi=0}^\infty L_m(u)(\xi,\eta)E_m(x,y,\xi,\eta)d\xi d\eta
\end{equation}

with
$$
E_m(x,y,\xi,\eta)=-{ \xi^m \over m\,\omega_{m+2}}\int_{\theta_m=-\pi}^\pi \int_{\theta_1,\ldots,\theta_{m-1}=0}^\pi {\sin\theta_{m-1}\sin^{2}\theta_{m-2}\cdots\sin^{m-1}\theta_1 \, d\theta_1 \ldots d\theta_{m} \over \left(\xi^2-2x\xi\cos\theta_1+x^2+(y-\eta)^2\right)^{m/2}}
$$
Since $I:= \int_{\theta_m=-\pi}^{\pi}\int_{\theta_2,\ldots,\theta_{m-1}=0}^\pi \sin\theta_{m-1}\sin^{2}\theta_{m-2}\cdots\sin^{m-2}\theta_2 \, d\theta_2 \ldots d\theta_{m-1}d\theta_m$ is the area of the unit sphere on $\R^{m}$ because
$$
\omega_m=\int_{\mathbb{S}_{m}} 1\,d\sigma=\int_{\theta_{m-1}=-\pi}^{\pi} \int_{\theta_1,\ldots,\theta_{m-2}=0}^\pi \sin\theta_{m-2}\sin^{2}\theta_{m-3}\cdots\sin^{m-2}\theta_1 \, d\theta_2 \ldots d\theta_{m-1},
$$
$E_m$ can be written as :
$$
E_m(x,y,\xi,\eta)=-{\omega_{m }\xi^m \over m\,\omega_{m+2}}\int_{\theta=0}^{\pi}{\sin^{m-1}\theta \, d\theta \over \left(\xi^2-2x\xi\cos\theta+x^2+(y-\eta)^2\right)^{m/2}}.
$$
Using the fact that $\omega_m={2\pi^{{m\over2}}\over\Gamma\left({m\over2}\right)}$, we get
$$
E_m(x,y,\xi,\eta)=-{ \xi^m\over2\pi  }\int_{\theta=0}^{\pi}{\sin^{m-1}\theta \, d\theta \over \left((x-\xi)^2+4x\xi\sin^2\left({\theta\over 2}\right)+(y-\eta)^2\right)^{m/2}}
$$
and we have proved moreover thanks to (\ref{SoFon}) that 
$$
L_{m,\xi,\eta}^*E_m(x,y\xi,\eta)=\delta_{(x,y)}(\xi,\eta).
$$
Moreover, since for all $(x,y)\in\H^+$ and for all $(\xi,\eta)\in\H^+$, we have
$$
E_m(x,y,\xi,\eta)=\left({ x\over\xi}\right)^{-m} E_m(\xi,\eta,x,y)
$$
and thanks to the Proposition \ref{conjugaison},  $S_m$ conjugates $L_m^\star$ and $L_m$, we have in the sense of distributions
$$
L_{m,x,y}E_m(x,y,\xi,\eta)=L_{m,x,y}\left(\left({ x\over\xi}\right)^{-m} E_m(\xi,\eta,x,y)\right)=\left({ x\over\xi}\right)^{-m} L^\star_{m,x,y}E_m(\xi,\eta,x,y),
$$
then
$$
L_{m,x,y}E_m(x,y,\xi,\eta)=\left({ x\over\xi}\right)^{-m} \delta_{(\xi,\eta)}(x,y)= \delta_{(\xi,\eta)},
$$
and this completes the proof.

\end{proof}

For $m\in\Z\setminus \N$, the previous proposition and the Weinstein principle gives us the following proposition :

\begin{prop}{\bf (partially in \cite{weinstein1954, weinacht1968,weinstein1948iii})}\label{Mnegatif}
Let $m\in\Z\setminus\N^*$. For $(x,y)\in\H^+$ and $(\xi,\eta)\in\H^+$,
$$
E_{m}(x,y,\xi,\eta)=\left({\xi\over x}\right)^{m-1} E_{2-m}(x,y,\xi,\eta)
$$
$$
=-{\xi x^{1-m}\over 2\pi}\int_{\theta=0}^{\pi}{\sin^{1-m}\theta \, d\theta \over \left[(x-\xi)^2+4x\xi\sin^2\left({\theta\over 2}\right)+(y-\eta)^2\right]^{1-{m\over2}}}
$$
is a fundamental solution on $\H^+$ for the operator $L^\star_{m,\xi,\eta}$ at the fixed point $(x,y)\in\H^+$ and it is also a fundamental solution on $\H^+$ of the operator $L_{m,x,y}$ at the fixed point $(\xi,\eta)\in\H^+$.
\end{prop}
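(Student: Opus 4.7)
The strategy is to bootstrap from Proposition~\ref{WeinsteinSolFond}: since $m\in\Z\setminus\N^*$ forces $2-m\in\N^*$, that proposition already gives an explicit fundamental solution $E_{2-m}$ for both $L_{2-m,x,y}$ and $L^\star_{2-m,\xi,\eta}$. The Weinstein principle (Proposition~\ref{PrincipeWeinstein}) and the conjugation identity of Proposition~\ref{conjugaison} are the natural tools to transport this to $L_m$.

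I would first substitute the explicit integral from Proposition~\ref{WeinsteinSolFond} into the definition $E_m(x,y,\xi,\eta)=(\xi/x)^{m-1}E_{2-m}(x,y,\xi,\eta)$. The prefactors combine as $(\xi/x)^{m-1}\cdot\xi^{2-m}=\xi^{m-1}\xi^{2-m}x^{1-m}=\xi\,x^{1-m}$, producing the second displayed formula. Reading both sides of the resulting integral under the interchange $(x,y)\leftrightarrow(\xi,\eta)$ yields the swap symmetry
\[
E_m(x,y,\xi,\eta)=\left(\frac{\xi}{x}\right)^m E_m(\xi,\eta,x,y),
\]
which is the structural identity needed to deduce the adjoint statement from the direct one.

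For the identity $L_{m,x,y}E_m(x,y,\xi,\eta)=\delta_{(\xi,\eta)}(x,y)$, I would apply Proposition~\ref{PrincipeWeinstein} in the $(x,y)$ variables to $E_m=x^{1-m}\cdot[\xi^{m-1}E_{2-m}]$. Because $x>0$ on $\H^+$, multiplication by the smooth weight $x^{1-m}$ extends to distributions, and the Weinstein principle gives
\[
L_{m,x,y}E_m(x,y,\xi,\eta)=x^{1-m}L_{2-m,x,y}\bigl[\xi^{m-1}E_{2-m}(x,y,\xi,\eta)\bigr]=x^{1-m}\xi^{m-1}\,\delta_{(\xi,\eta)}(x,y);
\]
the smooth factor $x^{1-m}\xi^{m-1}$ equals $1$ on the support $\{x=\xi\}$ of the Dirac mass, so the right-hand side collapses to $\delta_{(\xi,\eta)}(x,y)$.

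Finally, to obtain $L^\star_{m,\xi,\eta}E_m(x,y,\xi,\eta)=\delta_{(x,y)}(\xi,\eta)$ I would rewrite Proposition~\ref{conjugaison} in the $(\xi,\eta)$ variables as $L^\star_{m,\xi,\eta}u=\xi^{m}L_{m,\xi,\eta}[\xi^{-m}u]$ and apply it with $u=E_m(x,y,\cdot,\cdot)$. The swap symmetry produces $\xi^{-m}E_m(x,y,\xi,\eta)=x^{-m}E_m(\xi,\eta,x,y)$, so the factor $x^{-m}$, which is constant in $(\xi,\eta)$, comes outside $L_{m,\xi,\eta}$, and the identity just proved (read in the variables $(\xi,\eta)$) collapses the whole expression to $\xi^{m}x^{-m}\delta_{(x,y)}(\xi,\eta)=\delta_{(x,y)}(\xi,\eta)$. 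No serious obstacle is anticipated; the only point worth checking is that the algebraic conjugations of Propositions~\ref{PrincipeWeinstein} and~\ref{conjugaison}, proved for smooth data, extend to $E_m$ as a distribution, which is immediate since the conjugating weights $x^{\pm(1-m)}$ and $\xi^{\pm m}$ are smooth and nonvanishing on $\H^+$.
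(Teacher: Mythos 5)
Your proof is correct, and its mechanics differ from what the paper does. The paper works entirely in the weak form: it starts from the integral identity $u(x,y)=\int_{\H^+}(L_m u)\,E_m\,d\xi d\eta$ of Proposition~\ref{WeinsteinSolFond}, applies the Weinstein principle \emph{inside the integral} to the integration variable $\xi$ (writing $L_m u=\xi^{1-m}L_{2-m}[\xi^{m-1}u]$), then substitutes $v=x^{m-1}u$ and $m'=2-m$ to read off the new reproducing kernel $(\xi/x)^{m'-1}E_{2-m'}$ for $L^\star_{m'}$, finishing with ``the second point is totally similar.'' You instead treat the Weinstein principle as an identity of differential operators with $C^\infty$ coefficients on $\H^+$ — which it is, since the $x^{m-3}u$ terms cancel when one expands $x^{1-m}L_{2-m}[x^{m-1}\,\cdot\,]$ — so it passes directly to distributions; applying it in the $(x,y)$ variables and using the known $L_{2-m,x,y}E_{2-m}=\delta_{(\xi,\eta)}$ from Proposition~\ref{WeinsteinSolFond}, the smooth prefactor $x^{1-m}\xi^{m-1}$ evaluates to $1$ on the diagonal and collapses. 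You then get the adjoint statement from the swap symmetry $E_m(x,y,\xi,\eta)=(\xi/x)^m E_m(\xi,\eta,x,y)$ (visible directly from the second displayed formula, whose bracket is $(x,y)\leftrightarrow(\xi,\eta)$ symmetric and whose prefactor $\xi x^{1-m}$ flips correctly) combined with Proposition~\ref{conjugaison} rewritten as $L^\star_{m,\xi,\eta}=\xi^m L_{m,\xi,\eta}\,\xi^{-m}$. In effect you transplant the ``second half'' mechanism of the paper's proof of Proposition~\ref{WeinsteinSolFond} (swap symmetry plus $S_m$-conjugation) to this proposition, proving the $L_{m,x,y}$ identity first and deducing the $L^\star_{m,\xi,\eta}$ one, whereas the paper proves them in the opposite order by the test-function route. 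Your version avoids re-running the integral manipulation and makes the role of the three algebraic ingredients — Weinstein principle, $S_m$-conjugation, swap symmetry — more explicit; the paper's version avoids having to invoke the passage of operator identities from $C^2$ functions to distributions, a step you correctly justify but the paper never needs.
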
 

\begin{proof}

We have for all $m\in\N^*$, $u\in\mathcal{D}(\H^+)$ and $(x,y)\in\H^+$,
$$
u(x,y)=\int_{(\xi,\eta)\in\H^+}(L_m u)E_m(x,y,\xi,\eta)\,d\xi d\eta,
$$
and by the Weinstein principle (proposition \ref{PrincipeWeinstein}), we have
$$
u(x,y)=\int_{\H^+} \xi^{1-m}L_{2-m} (\xi^{m-1}u)E_m(x,y,\xi,\eta)\,d\xi d\eta.
$$
Denoting $v(x,y)=x^{m-1}u(x,y)$, we obtain
$$
x^{1-m}v(x,y)=\int_{\H^+} \xi^{1-m}(L_{2-m}v)E_m(x,y,\xi,\eta)\,d\xi d\eta,
$$
then, for all $m'\in\Z\setminus\N^*$, $v\in\mathcal{D}(\H^+)$ and $(x,y)\in\H^+$, putting $m=2-m'$, we have
$$
v(x,y)=\int_{\H^+} (L_{m'}v)\left({\xi\over x}\right)^{m'-1} E_{2-m'}(x,y,\xi,\eta)\,d\xi d\eta.
$$
The proof of the second point is totally similar.
\end{proof}

\section{Fundamental solutions for the Weinstein equation with complex coefficients}

In this section, we will generalize the result obtained in the previous section for $m\in\Z$ to $m\in\C$.

More precisely, if Re $m\geq 1$, then
$$
E_m=-\frac{\xi^m}{2\pi}\int_{\theta=0}^{\pi}\frac{\sin^{m-1}\theta\,d\theta}{[(x-\xi)^2+4x\xi\sin^2\left(\frac\theta2\right)+(y-\eta)^2]^{m/2}}
$$
is suitable, and if Re $m<1$, then 
$$
E_m=-{\xi x^{1-m}\over 2\pi}\int_{\theta=0}^{\pi}{\sin^{1-m}\theta \, d\theta \over \left[(x-\xi)^2+4x\xi\sin^2\left({\theta\over 2}\right)+(y-\eta)^2\right]^{1-{m\over2}}}
$$
is suitable.

\bigskip\noindent

In the sequel, $E_m$ will always designate the corresponding formula (depending of Re $m\geq 1$ or Re $m<1$).

\begin{prop}
For $m\in\C$ and $(\xi,\eta)\in\H^+$ fixed, we have
$$
\forall (x,y)\in\H^+\setminus\{(\xi,\eta)\}\qquad L_{m,x,y}E_m(x,y,\xi,\eta)=0.
$$
and for $(x,y)\in\H^+$ fixed, we have
$$
\forall (\xi,\eta)\in\H^+\setminus\{(x,y)\}\qquad L^\star_{m,\xi,\eta}E_m(x,y,\xi,\eta)=0.
$$
\end{prop}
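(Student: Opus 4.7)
The plan is to verify both identities on $\H^+\setminus\{(\xi,\eta)\}$ (resp.\ on $\H^+\setminus\{(x,y)\}$) pointwise by differentiating under the integral sign. On any compact subset away from the singularity, $R(x,y,\xi,\eta,\theta):=(x-\xi)^2+4x\xi\sin^2(\theta/2)+(y-\eta)^2 = x^2+\xi^2-2x\xi\cos\theta+(y-\eta)^2$ is bounded below by a positive constant uniformly in $\theta\in[0,\pi]$, so the integrand and all its $(x,y)$-derivatives are bounded in $\theta$ by an integrable majorant (using that $\sin^{m-1}\theta$, resp.\ $\sin^{1-m}\theta$, is integrable near $0$ and $\pi$ in the respective cases). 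Dominated convergence justifies interchanging $L_{m,x,y}$ with the integral.

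The main step is the case $\Re m\geq 1$. A short calculation yields
\begin{equation*}
L_{m,x,y}\bigl[R^{-m/2}\bigr]=\frac{m^{2}\xi\cos\theta}{x}\,R^{-m/2-1}-m(m+2)\,\xi^{2}\sin^{2}\theta\;R^{-m/2-2},
\end{equation*}
where the simplification uses $(x-\xi\cos\theta)^{2}+(y-\eta)^{2}=R-\xi^{2}\sin^{2}\theta$. Multiplying by $\sin^{m-1}\theta$, one then checks (this is the key algebraic observation and the main obstacle, since one must guess the correct primitive) that
\begin{equation*}
\sin^{m-1}\theta\;L_{m,x,y}\bigl[R^{-m/2}\bigr]=\frac{\d}{\d\theta}\!\left(\frac{m\,\xi\,\sin^{m}\theta}{x\,R^{m/2+1}}\right).
\end{equation*}
Integrating over $\theta\in[0,\pi]$, the boundary terms vanish because $\sin^{m}\theta\to 0$ at $0$ and $\pi$ (as $\Re m\geq 1>0$) while $R$ remains positive there. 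Hence $L_{m,x,y}E_{m}=0$ on $\H^{+}\setminus\{(\xi,\eta)\}$.

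For the case $\Re m<1$, I would reduce to the previous case via the Weinstein principle (Proposition \ref{PrincipeWeinstein}). A direct inspection of the two formulas shows $E_{m}(x,y,\xi,\eta)=(x/\xi)^{1-m}E_{2-m}(x,y,\xi,\eta)$, and Proposition \ref{PrincipeWeinstein} rewritten as $L_{m,x,y}[x^{1-m}v]=x^{1-m}L_{2-m,x,y}[v]$ (with $v=E_{2-m}$) gives
\begin{equation*}
L_{m,x,y}E_{m}=\xi^{m-1}\,L_{m,x,y}\bigl[x^{1-m}E_{2-m}\bigr]=\xi^{m-1}x^{1-m}\,L_{2-m,x,y}E_{2-m}=0,
\end{equation*}
since $\Re(2-m)>1$ places us in the first case.

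Finally, for $L_{m,\xi,\eta}^{\star}E_{m}=0$, the kernel $R$ is symmetric under $(x,y)\leftrightarrow(\xi,\eta)$, so both formulas yield $E_{m}(x,y,\xi,\eta)=(x/\xi)^{-m}E_{m}(\xi,\eta,x,y)$, which is exactly $S_{m,\xi}$ applied to $E_{m}(\xi,\eta,x,y)$ (up to the constant $x^{-m}$). Invoking the conjugation identity $S_{m}L_{m}^{\star}=L_{m}S_{m}$ of Proposition \ref{conjugaison} in the $(\xi,\eta)$ variables (exactly as at the end of the proof of Proposition \ref{WeinsteinSolFond}), the vanishing of $L_{m,\xi,\eta}E_{m}(\xi,\eta,x,y)$ already established above transfers to the vanishing of $L_{m,\xi,\eta}^{\star}E_{m}(x,y,\xi,\eta)$ on $\H^{+}\setminus\{(x,y)\}$, completing the proof.
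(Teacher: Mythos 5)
Your proof is correct and follows essentially the same route as the paper's: compute how $L_{m,x,y}$ acts on the kernel $R^{-m/2}$ and show the resulting $\theta$-integrand is a total derivative that integrates to zero. Your identity $\sin^{m-1}\theta\,L_{m,x,y}[R^{-m/2}]=\partial_\theta\bigl(m\xi\sin^m\theta/(xR^{m/2+1})\bigr)$ is precisely what the paper establishes by first writing $\Delta f_m=m^2f_{m+2}+m(\xi/x)\sin\theta\,\partial_\theta f_{m+2}$ and then integrating by parts, so the two presentations are equivalent (your display of the explicit antiderivative is a slightly tidier packaging of the same cancellation). The one genuine departure is your treatment of $\Re m<1$: where the paper simply states the computation is ``totally similar,'' you reduce to the already-proved case $\Re m\geq 1$ by combining $E_m=(\xi/x)^{m-1}E_{2-m}$ with the Weinstein principle $L_m[x^{1-m}v]=x^{1-m}L_{2-m}[v]$; this avoids repeating the estimate and is a clean alternative. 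The deduction of the adjoint identity from $S_mL_m^\star=L_mS_m$ together with the symmetry $E_m(x,y,\xi,\eta)=(x/\xi)^{-m}E_m(\xi,\eta,x,y)$ matches the paper exactly.
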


\begin{proof} 
For convenience in the calculations, it should be denoted
$$
f_m(x,y,\xi,\eta,\theta)={1\over[(x-\xi)^2+4x\xi\sin^2\left(\frac\theta2\right)+(y-\eta)^2]^{{m\over2}}}.
$$ 
To prove the first equality of the proposition, it suffices to show that
$$
\int_{\theta=0}^\pi L_{m,x,y} f_m(x,y,\xi,\eta,\theta) \sin^{m-1}\theta\,d\theta=0.
$$
Let's compute the derivatives of the function $f_m$ :
$$
\d_x f_m={-m\over2}{2(x-\xi)+4\xi\sin^2{\theta\over2}\over [(x-\xi)^2+4x\xi\sin^2\left(\frac\theta2\right)+(y-\eta)^2]^{{m\over2}+1}}\quad\left(=-m(x-\xi\cos\theta)f_{m+2}\right)
$$
and
$$
\d_{xx} f_m={-m\over [(x-\xi)^2+4x\xi\sin^2\left(\frac\theta2\right)+(y-\eta)^2]^{{m\over2}+1}}+\qquad\qquad\qquad\qquad$$
$$
\qquad\qquad\qquad+{m\over2}\left({m\over2}+1\right){(2(x-\xi)+4\xi\sin^2{\theta\over2})^2\over [(x-\xi)^2+4x\xi\sin^2\left(\frac\theta2\right)+(y-\eta)^2]^{{m\over2}+2}}
$$
and
$$
\d_{yy} f_m={-m\over [(x-\xi)^2+4x\xi\sin^2\left(\frac\theta2\right)+(y-\eta)^2]^{{m\over2}+1}}+\qquad\qquad\qquad\qquad$$
$$
\qquad\qquad\qquad+{m\over2}\left({m\over2}+1\right){(2(y-\eta))^2\over [(x-\xi)^2+4x\xi\sin^2\left(\frac\theta2\right)+(y-\eta)^2]^{{m\over2}+2}}
$$
We then have
$$
\Delta f_m={-2m\over [(x-\xi)^2+4x\xi\sin^2\left(\frac\theta2\right)+(y-\eta)^2]^{{m\over2}+1}}+\qquad\qquad\qquad\qquad
$$
$$
\qquad\qquad\qquad\qquad+{m\over2}\left({m\over2}+1\right){(2(x-\xi)+4\xi\sin^2{\theta\over2})^2+(2(y-\eta))^2\over [(x-\xi)^2+4x\xi\sin^2\left(\frac\theta2\right)+(y-\eta)^2]^{{m\over2}+2}}.
$$
However
$$
\left(2(x-\xi)+4\xi\sin^2{\theta\over2}\right)^2+\left(2(y-\eta)\right)^2=4\left[(x-\xi)^2+4x\xi\sin^2\left(\frac\theta2\right)+(y-\eta)^2\right]-4\xi^2\sin^2\theta
$$
then
$$
\Delta f_m={m^2\over [(x-\xi)^2+4x\xi\sin^2\left(\frac\theta2\right)+(y-\eta)^2]^{{m\over2}+1}}\qquad\qquad\qquad
$$
$$
\qquad\qquad\qquad-{m(m+2)\xi^2 \sin^2\theta\over [(x-\xi)^2+4x\xi\sin^2\left(\frac\theta2\right)+(y-\eta)^2]^{{m\over2}+2}}\, .
$$
Noting that
$$
{\d f_{m+2}\over \d\theta}=-(m+2){x\xi\sin\theta\over  [(x-\xi)^2+4x\xi\sin^2\left(\frac\theta2\right)+(y-\eta)^2]^{{m\over2}+2}}\, ,
$$
we have
$$
\Delta f_m=m^2 f_{m+2}+m{\xi\over x}\sin\theta {\d f_{m+2}\over \d\theta}
$$
and by integration by parts, we have :
$$
\int_{\theta=0}^\pi \Delta f_m \sin^{m-1}\theta\,d\theta=m^2\int_{\theta=0}^\pi f_{m+2}\sin^{m-1}\theta\,d\theta+m{\xi\over x}\int_{\theta=0}^\pi {\d f_{m+2}\over \d\theta}\sin^{m}\theta\,d\theta
$$
$$
={m\over x}\int_{\theta=0}^\pi   m\left(x-\xi\cos\theta\right) f_{m+2}\sin^{m-1}\theta \,d\theta
$$
$$
=-{m\over x}\int_{\theta=0}^\pi \d_x f_m \sin^{m-1}\theta\,d\theta,
$$
and the result is deduced in the case Re $m\geq1$. The proof is totally similar if Re $m<1$. The second equality of the proposition can be deduced immediately of the fact that $S_m$ conjugates $L_m^\star$ and $L_m$ (see proposition \ref{conjugaison}).
\end{proof}

In the sequel, we will denote
$$
d^2=(x-\xi)^2+(y-\eta)^2\hbox{ and }k=\frac{4x\xi}{d^2}.
$$

The following proposition gives the behavior of these functions near their singularity. And it will be useful to show that they are indeed fundamental solutions for $m\in\C$  and not only  for integer values of $m$. 
In particular, we show that the behavior of the fundamental solutions is close to the behavior of fundamental solutions for the Laplacian. This fact is well known for elliptic operators. But we emphasize here that in the proof of this proposition, the estimates of elliptic integrals are totally elementary estimates (using the dominated convergence theorem) and here we do not use estimates arising from classical estimates of hypergeometric functions.
From those integral expressions, we deduce the following estimations :
\begin{prop}\label{estimation}
Let $m\in\C$. For $(x,y)\in\H^+$ fixed,
$$
E_m(x,y,\xi,\eta)\mathop{\sim}_{(\xi,\eta)\to(x,y)}{1\over 2\pi}\ln\sqrt{(x-\xi)^2+(y-\eta)^2}
$$
\end{prop}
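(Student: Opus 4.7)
My plan is to analyze the asymptotic behavior of the integral defining $E_m$ as $d := \sqrt{(x-\xi)^2 + (y-\eta)^2} \to 0$ by isolating its logarithmic singularity in the parameter $k = 4x\xi/d^2 \to \infty$ and controlling the remainder with the Lebesgue dominated convergence theorem, in the elementary spirit emphasized in the introduction. I would treat the case $\operatorname{Re} m \geq 1$ in detail; the case $\operatorname{Re} m < 1$ then follows from the same argument applied to the second formula for $E_m$, which coincides with the first after the substitution $m \mapsto 2-m$ (up to an extra prefactor $(x/\xi)^{1-m}$ that tends to $1$).

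Starting from the defining formula for $E_m$ with $\operatorname{Re} m \geq 1$, I factor $d^{-m}$ out of the denominator and substitute $u = \sqrt{k}\sin(\theta/2)$; using $\sin\theta = 2\sin(\theta/2)\cos(\theta/2)$, this produces
$$E_m(x,y,\xi,\eta) = -\frac{1}{2\pi}\left(\frac{\xi}{x}\right)^{m/2}\int_0^{\sqrt{k}}\frac{u^{m-1}(1-u^2/k)^{(m-2)/2}}{(1+u^2)^{m/2}}\,du =: -\frac{1}{2\pi}\left(\frac{\xi}{x}\right)^{m/2} I(k).$$
Since $(\xi/x)^{m/2} \to 1$ and $-\tfrac{1}{2}\ln k = \ln d - \tfrac{1}{2}\ln(4x\xi)$, it suffices to establish $I(k) = \tfrac{1}{2}\ln k + O(1)$ as $k \to \infty$. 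I would split $I = I_1 + I_2$, where $I_1$ is obtained by replacing $(1-u^2/k)^{(m-2)/2}$ with $1$. The substitution $v = u^2/(1+u^2)$ turns $I_1$ into $\tfrac{1}{2}\int_0^{k/(1+k)} v^{(m-2)/2}/(1-v)\,dv$; writing $v^{(m-2)/2} = 1 + (v^{(m-2)/2}-1)$ then separates the elementary logarithmic contribution $\tfrac{1}{2}\ln(1+k)$ from a remainder that is bounded near $v=1$ (Taylor) and integrable near $v=0$ since $\operatorname{Re} m > 0$, so $I_1(k) = \tfrac{1}{2}\ln k + O(1)$.

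The main step is to show $I_2(k) = O(1)$. The substitution $v = u^2/k$ yields
$$I_2(k) = \frac{1}{2}\int_0^1 \frac{v^{(m-2)/2}[(1-v)^{(m-2)/2}-1]}{(1/k + v)^{m/2}}\,dv,$$
whose integrand converges pointwise to $[(1-v)^{(m-2)/2}-1]/(2v)$. The hardest part will be producing a $k$-independent integrable dominating function despite the complex exponents. Since all bases are positive reals, one has $|(1/k+v)^{-m/2}| = (1/k+v)^{-\operatorname{Re} m/2} \leq v^{-\operatorname{Re} m/2}$ and $|v^{(m-2)/2}| = v^{\operatorname{Re} m/2 - 1}$, so the ratio $|v^{(m-2)/2}/(1/k+v)^{m/2}|$ is bounded by $1/v$; combined with $|(1-v)^{(m-2)/2}-1| \leq Cv$ for $v$ near $0$ (Taylor) and $|(1-v)^{(m-2)/2}-1| \leq (1-v)^{(\operatorname{Re} m -2)/2} + 1$ for $v$ near $1$ (integrable on $[1/2,1]$ when $\operatorname{Re} m > 0$), this produces the required uniform integrable bound. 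Dominated convergence then gives $I_2(k) = O(1)$, whence $I(k) = \tfrac{1}{2}\ln k + O(1)$ and $E_m = \tfrac{1}{2\pi}\ln d + O(1)$, i.e. $E_m/\ln d \to 1/(2\pi)$ as $(\xi,\eta)\to(x,y)$, which is the claimed equivalence.
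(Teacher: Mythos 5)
Your proof is correct and takes essentially the same route as the paper's: both reduce the estimate to the limit $k=4x\xi/d^2\to+\infty$ of an elliptic-type integral, split off the dominant logarithmic contribution, and control the remainder by a convergence theorem (this is exactly the paper's Proposition~\ref{Equivalent1}). The substitutions differ only cosmetically — the paper sets $u=\sin(\theta/2)$ and then $u=(\sh t)/\sqrt{k}$ to land on $\int_0^{\argsh\sqrt k}\th^{m-1}t\,dt\sim\tfrac12\ln k$, while you rescale by $\sqrt k$ and then use $v=u^2/(1+u^2)$, which leads to the same integral via $t=\argsh u$. One small point worth noting: you correctly justify the $O(1)$ remainder by exhibiting a $k$-independent integrable majorant and invoking dominated convergence, whereas the paper says ``monotone convergence,'' which is not literally applicable to the complex-valued integrand when $m\notin\R$; your version is the cleaner one.
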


\begin{proof}
We start with Re $m\geq1$.

In this case, we have : 

$$
E_m(x,y,\xi,\eta)=-\frac{\xi^m}{2\pi}\int_{\theta=0}^{\pi}\frac{\sin^{m-1}\theta \, d\theta}{\left[(x-\xi)^2+4x\xi\sin^2\left(\frac{\theta} 2\right)+(y-\eta)^2\right]^{m/2}}
$$
$$
=-\frac1{2\pi}\left(\frac{\xi}d\right)^m\int_{\theta=0}^\pi\frac{\sin^{m-1}\theta\,d\theta}{(1+k\sin^2\frac\theta2)^{m/2}}.
$$
Note that when $d\to0$, $k\to+\infty$.

We have the following proposition : 

\begin{prop}\label{Equivalent1}
When $k\to+\infty$ and $m\in\C$,
$$
\int_{\theta=0}^\pi\frac{\sin^{m-1}\theta\,d\theta}{(1+k\sin^2\frac\theta2)^{m/2}}\mathop\sim_{k\to+\infty}\frac{2^{m-1}}{k^{m/2}}\ln k.
$$
\end{prop}

\begin{proof}

Putting $u=\sin\frac\theta2$, this integral is equal to
$$
2^m\int_0^1\frac{u^{m-1}(1-u^2)^{\frac{m-2}2}du}{(1+ku^2)^{m/2}}={2^m\over k^{m/2}}\int_0^1\frac{u^{m-1}(1-u^2)^{\frac{m-2}2}du}{(\frac1k+u^2)^{m/2}}.
$$
However
$$
\int_0^1\frac{u^{m-1}(1-u^2)^{\frac{m-2}2}du}{(\frac1k+u^2)^{m/2}}-\int_0^1\frac{u^{m-1}du}{(\frac1k+u^2)^{m/2}}=
-\int_0^1\frac{u^{m-1}}{\left(\frac1k+u^2\right)^{m/2}}(1-(1-u^2)^{\frac{m-2}2})du
$$
and by monotone convergence, we obtain
$$
\mathop{\longrightarrow}_{k\to+\infty}-\int_0^1\frac{u^{m-1}}{\left(u^2\right)^{m/2}}(1-(1-u^2)^{\frac{m-2}2})du=-\int_0^1\frac{1-(1-u^2)^{\frac{m-2}2}}udu
$$
The change of variable $u=\frac1{\sqrt k}\sh t$ gives us
$$
\int_0^1\frac{u^{m-1}du}{(\frac1k+u^2)^{m/2}}=\int_0^{\argsh\sqrt k}\th^{m-1} t\,dt
$$
Since $\th^{m-1}t$ tends to 1 when $t\to+\infty$, we deduce that when $k\to+\infty$
$$
\int_0^{\argsh\sqrt k}\th^{m-1}dt\mathop{\sim}_{k\to+\infty}\int_0^{\argsh\sqrt k}dt=\argsh\sqrt k\mathop{\sim}_{k\to+\infty}\frac12\ln k.
$$
The proof is completed.
\end{proof}

Due to Proposition \ref{Equivalent1}, we have
$$
E_m(x,y,\xi,\eta)\mathop{\sim}_{d\to0+}-{1\over 2\pi}\left(\frac xd\right)^m\frac{2^{m-1}}{k^{m/2}}\ln k\mathop\sim_{d\to0+}\frac1{2\pi}\ln d.
$$

The case Re $m<1$ is analogous.

\end{proof}
Now, we can prove the main result of this section,
\begin{thm}\label{theo}
Let $m\in\C$. For $(x,y)\in\H^+$ and $(\xi,\eta)\in\H^+$,
$$
E_m(x,y,\xi,\eta)=-{\xi^m\over2\pi  }\int_{\theta=0}^{\pi}{\sin^{m-1}\theta \, d\theta \over \left[(x-\xi)^2+4x\xi\sin^2\left({\theta\over 2}\right)+(y-\eta)^2\right]^{m/2}} \qquad\hbox{if Re $m\geq 1$}
$$
$$
\hbox{and }
E_{m}(x,y,\xi,\eta)=\left({\xi\over x}\right)^{m-1} E_{2-m}(x,y,\xi,\eta)\qquad\qquad\qquad\qquad\qquad\qquad\qquad\qquad
$$
$$\qquad\qquad
=-{\xi x^{1-m}\over 2\pi}\int_{\theta=0}^{\pi}{\sin^{1-m}\theta \, d\theta \over \left[(x-\xi)^2+4x\xi\sin^2\left({\theta\over 2}\right)+(y-\eta)^2\right]^{1-{m\over2}}}\qquad\hbox{if Re $m<1$}
$$
is a fundamental solution on $\H^+$ for $L^\star_{m,\xi,\eta}$ at the fixed point $(x,y)\in\H^+$, which means that in the sense of distributions on $\H^+$:
$$
L_{m,\xi,\eta}^\star E_m(x,y,\xi,\eta)=\delta_{(x,y)}(\xi,\eta).
$$
Moreover, if $(\xi,\eta)\in\H^+$ is fixed, then in the sense of distributions on $\H^+$ :
$$
L_{m,x,y}E_m(x,y,\xi,\eta)=\delta_{(\xi,\eta)}(x,y),
$$
which means that $E_m$ is a fundamental solution on $\H^+$ of $L_{m,x,y}$ at the fixed point $(\xi,\eta)\in\H^+$.
\end{thm}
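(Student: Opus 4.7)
The plan is to combine the two facts already proved just above the theorem: $L_{m,x,y} E_m(\cdot,\cdot,\xi,\eta) \equiv 0$ on $\H^+ \setminus \{(\xi,\eta)\}$ from the preceding proposition, and the asymptotic $E_m(x,y,\xi,\eta) \sim \frac{1}{2\pi}\log d$ of Proposition~\ref{estimation}, where $d = \sqrt{(x-\xi)^2+(y-\eta)^2}$. Together with the local $L^1$-integrability of $\log d$ in the plane, these are exactly the ingredients needed to mimic the classical proof that the 2D Newtonian potential is a fundamental solution of the Laplacian. I will first establish $L_{m,x,y} E_m = \delta_{(\xi,\eta)}$, and then derive the adjoint statement $L^\star_{m,\xi,\eta} E_m = \delta_{(x,y)}$ from the manifest symmetry $E_m(x,y,\xi,\eta) = (\xi/x)^m E_m(\xi,\eta,x,y)$ of the integral formula, combined with the conjugation relation $S_m L_m^\star = L_m S_m$ from Proposition~\ref{conjugaison}.

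For the main identity, fix $\varphi \in \mathcal{D}(\H^+)$ and $(\xi,\eta)\in\H^+$ and aim at $\int_{\H^+} E_m\,L^\star_{m,x,y}\varphi\,dxdy = \varphi(\xi,\eta)$. I would cut out $B_\varepsilon := B((\xi,\eta),\varepsilon)$ and use the pointwise algebraic identity
$$
\varphi\, L_m E_m - E_m L^\star_m \varphi \;=\; \text{div}\Bigl(\varphi \nabla E_m - E_m \nabla \varphi + \tfrac{m E_m \varphi}{x}\,\vec{e}_1\Bigr),
$$
which is a direct computation from the explicit formulas for $L_m$ and $L^\star_m$. Integrating over $\H^+\setminus B_\varepsilon$, invoking $L_m E_m = 0$ there, and applying the divergence theorem (contributions along $\partial\H^+$ and at infinity vanish because $\varphi$ is compactly supported in $\H^+$) reduces the identity to a boundary integral on $\partial B_\varepsilon$ alone. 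As $\varepsilon\to 0$, the bulk integral converges to $\int_{\H^+} E_m L^\star_m \varphi\,dxdy$ by dominated convergence, the two boundary terms carrying an explicit factor $E_m$ are $O(\varepsilon|\log\varepsilon|)$ and disappear, so the entire problem reduces to showing
$$
\int_{\partial B_\varepsilon}\varphi\,\partial_n E_m\,d\sigma \;\longrightarrow\; \varphi(\xi,\eta) \qquad \text{as }\varepsilon \to 0.
$$

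The main obstacle is precisely this last limit, because Proposition~\ref{estimation} controls $E_m$ itself but not its derivatives. I would resolve it by writing $R_m := E_m - \frac{1}{2\pi}\log d$ and showing that $R_m$ is bounded with bounded derivatives in a neighbourhood of $(\xi,\eta)$; from this, $\partial_n E_m = \frac{1}{2\pi\varepsilon}+O(1)$ on $\partial B_\varepsilon$ and the desired limit is immediate. Two natural routes produce the $C^1$-bound on $R_m$: either differentiate the elliptic-integral representation of $E_m$ under the integral sign and rerun the Lebesgue dominated-convergence estimates that powered the proof of Proposition~\ref{estimation}; or observe that $L_m\bigl(\frac{1}{2\pi}\log d\bigr) = \delta_{(\xi,\eta)} + \frac{m(x-\xi)}{2\pi x\,d^2}$ with the non-Dirac term locally in $L^1$, so $L_m R_m \in L^1_{\text{loc}}$, and invoke local elliptic regularity for the uniformly elliptic operator $L_m$ on $\H^+$. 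Once this is in hand, $L_{m,x,y} E_m = \delta_{(\xi,\eta)}$ is established for every $m\in\C$, and the companion identity $L^\star_{m,\xi,\eta} E_m = \delta_{(x,y)}$ follows at once from the symmetry-and-conjugation reduction announced above.
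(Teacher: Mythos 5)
Your plan reproduces the paper's own strategy: excise a ball $B_\varepsilon$ around the singularity, use the Lagrange-type divergence identity to convert the bulk integral into a boundary flux, let $\varepsilon\to0$, and deduce the companion statement for $L^\star_m$ from the symmetry $E_m(x,y,\xi,\eta)=(\xi/x)^m E_m(\xi,\eta,x,y)$ together with Proposition~\ref{conjugaison}. Your route (a) is what the paper actually carries out: rather than isolating $R_m=E_m-\tfrac1{2\pi}\log d$, the paper differentiates the elliptic-integral formula directly, splits the boundary flux into three pieces $J_{\varepsilon,1},J_{\varepsilon,2},J_{\varepsilon,3}$, and supplies two fresh elliptic-integral asymptotics (Propositions~\ref{Equivalent2} and~\ref{Equivalent3} --- genuinely new estimates, not a mere rerun of the one behind Proposition~\ref{estimation}) to conclude $J_{\varepsilon,1},J_{\varepsilon,3}\to0$ and $J_{\varepsilon,2}\to u(x,y)$. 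Your route (b) is a genuine alternative but over-claims: the source $L_m R_m=\tfrac{m(x-\xi)}{2\pi x d^2}$ lies in $L^p_{\mathrm{loc}}$ only for $p<2$, so interior elliptic estimates give $R_m\in W^{2,p}_{\mathrm{loc}}$, hence $R_m$ bounded (even H\"older) but only $\nabla R_m\in L^q_{\mathrm{loc}}$ for every finite $q$, not $L^\infty$; consequently you cannot assert $\partial_n E_m=\tfrac1{2\pi\varepsilon}+O(1)$ pointwise. To rescue the argument you must instead exhibit a sequence $\varepsilon_n\to0$ along which the flux of $\nabla R_m$ through $\partial B_{\varepsilon_n}$ vanishes (possible, since $\int_0^1\int_{\partial B_\varepsilon}|\nabla R_m|^q\,d\sigma\,d\varepsilon<\infty$ forces $\liminf_{\varepsilon\to0}\varepsilon\int_{\partial B_\varepsilon}|\nabla R_m|^q\,d\sigma=0$), and then appeal to the fact that the bulk integral already converges to identify the full limit. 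This is doable but noticeably more delicate than the paper's direct dominated-convergence estimates. Finally, reversing the order (proving $L_{m,x,y}E_m=\delta_{(\xi,\eta)}$ first instead of $L^\star_{m,\xi,\eta}E_m=\delta_{(x,y)}$ as the paper does) is harmless given the symmetry you invoke.
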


\begin{proof}

\def\e{{\varepsilon}}

Let $m\in\C$ and $u\in\mathcal{ D}(\H^+)$. Let $(x,y)\in\H^+$ and $\e>0$ such that $D((x,y),\e)\subset\H^+$where $D((x,y),\e)$ is the disk of center $(x,y)$ and of radius $\e$.

We put
$$
I_\e:=\int_{\H^+\setminus D((x,y),\e)} L_m(u)(\xi,\eta)E_m(x,y,\xi,\eta) d\xi d\eta=
$$
$$
=\int_{\H^+\setminus D((x,y),\e)} \left(L_m(u)(\xi,\eta)E_m(x,y,\xi,\eta)-u(\xi,\eta)L_m^\star(E_m)(x,y,\xi,\eta)\right)d\xi d\eta
$$
because $L_m^\star(E_m)=0$ on $\H^+\setminus D((x,y),\e)$.
An elementary calculation gives us
$$
L_m(u)E_m-uL_m^\star(E)=\d_\xi\left(\!(\d_\xi u)E_m-u(\d_\xi E_m)+\frac m\xi uE_m\!\right)\qquad\qquad\qquad\qquad\qquad
$$
$$
\qquad\qquad\qquad\qquad +\d_\eta\left((\d_\eta u)E_m-u(\d_\eta E_m)\right).
$$
We will recall the Green formula in the framework that will be useful to us here.

\bigskip

{\bf Recall.} {\it 
Let $\Omega$ be an open set of $\R^2$ whose boundary is piecewise ${C}^1$-differentiable. 

By denoting $\vec n$ the outer unit normal vector to $\d\Omega$ and $ds$ the arc length element on $\d\Omega$ (positively oriented), if $X=(X_1,X_2):\overline\Omega\to\C^2$ is a ${C}^1$ vector field, then
$$
\int_\Omega \div X(x,y)dxdy=\int_{\d\Omega} X(x,y)\cdot\vec n(x,y)ds
$$}

\bigskip

With this reminder applied to the open set $\Omega=U\setminus D((x,y),\e)$ where $U$ is a regular open set of $\H^+$ containing the support of $u$, we have
$$
I_\e=-\int_{t\in[0,2\pi]\atop
(\xi,\eta)=(x,y)+\e(\cos t ,\sin t)}\left(\left((\d_\xi u)E_m-u(\d_\xi E_m)+\frac m\xi uE_m\right)\cos t+\right.\qquad\qquad\qquad\qquad
$$
$$
\qquad\quad\qquad\qquad\qquad\qquad\qquad\qquad\qquad\qquad\Bigl.+\left((\d_\eta u)E_m-u(\d_\eta E_m)\right)\sin t\Bigr) \e dt
$$
Proposition \ref{estimation} shows that
$$
\int_{t\in[0,2\pi]\atop
(\xi,\eta)=(x,y)+\e(\cos t,\sin t)}\left[[(\d_\xi u)+\frac m\xi u]\cos t+(\d_\eta u)\sin t\right]E_m\,\e dt\mathop{\longrightarrow}_{\e\to0+}0
$$
because $\lim_{\e\to0}\e\ln\e=0$.
Then, if we want to prove that $\lim_{\e\to0} I_\e$ exists, we have to prove the existence of
$$
\lim_{\e\to0}\int_{t\in[0,2\pi]\atop
(\xi,\eta)=(x,y)+\e(\cos t,\sin t)}u\left((\d_\xi E_m)\cos t+(\d_\eta E_m)\sin t\right)\e\,dt,
$$
and this limit will be equal to the limit of $I_\e$.

Now, we assume that Re $m\geq1$.

We denote $J_\e$ the integral in the previous expression.
A computation gives
$$
J_\e=-\underbrace{\frac m{2\pi}\int_{t\in[0,2\pi]\atop
(\xi,\eta)=(x,y)+\e(\cos t,\sin t)}u\frac{\xi^{m-1}}{\e^m}\int_0^\pi\frac{\sin^{m-1}\theta\,d\theta}{\left(1+k\sin^2\frac\theta2\right)^{m/2}}\e\cos t\,dt}_{J_{\e,1}}+
$$
$$
+\underbrace{\frac m{2\pi}\int_{t\in[0,2\pi]\atop
(\xi,\eta)=(x,y)+\e(\cos t,\sin t)}u\frac{\xi^{m}}{\e^{m+2}}\int_0^\pi\frac{\sin^{m-1}\theta\,d\theta}{\left(1+k\sin^2\frac\theta2\right)^{m/2+1}}\e^2\,dt}_{J_{\e,2}}+
$$
$$
+\underbrace{\frac m{2\pi}\int_{t\in[0,2\pi]\atop
(\xi,\eta)=(x,y)+\e(\cos t,\sin t)}u\frac{\xi^{m}}{\e^{m+2}}\int_0^\pi\frac{2x\sin^2\frac\theta2\sin^{m-1}\theta\,d\theta}{\left(1+k\sin^2\frac\theta2\right)^{m/2+1}}\e\cos t\,dt}_{J_{\e,3}}
$$
where $k=\frac{4x\xi}{\e^2}$.

We have the following propositions : 

\begin{prop}\label{Equivalent2}
When $k\to+\infty$ and $m\in\C$
$$
\int_{\theta=0}^\pi\frac{\sin^2{\theta\over2}\sin^{m-1}\theta\,d\theta}{(1+k\sin^2\frac\theta2)^{m/2+1}}\mathop\sim_{k\to+\infty}
\frac{2^{m-1}}{k^{\frac m2+1}}\ln k.
$$
\end{prop}

\begin{proof}

We put $u=\sin\frac\theta2$, this integral is equal to
$$
2^m\int_0^1\frac{u^{m+1}(1-u^2)^{\frac{m-2}2}du}{(1+ku^2)^{m/2+1}}={2^m\over k^{m/2+1}}\int_0^1\frac{u^{m+1}(1-u^2)^{\frac{m-2}2}du}{(\frac1k+u^2)^{m/2+1}}.
$$
However
$$
\int_0^1\frac{u^{m+1}(1-u^2)^{\frac{m-2}2}du}{(\frac1k+u^2)^{m/2+1}}-\int_0^1\frac{u^{m+1}du}{(\frac1k+u^2)^{m/2+1}}=
-\int_0^1\frac{u^{m+1}}{\left(\frac1k+u^2\right)^{m/2+1}}(1-(1-u^2)^{\frac{m-2}2})du
$$
$$
\mathop{\longrightarrow}_{k\to+\infty}-\int_0^1\frac{u^{m+1}}{\left(u^2\right)^{m/2+1}}(1-(1-u^2)^{\frac{m-2}2})du=-\int_0^1\frac{1-(1-u^2)^{\frac{m-2}2}}{u}du.
$$
The change of variable $u=\frac1{\sqrt k}\sh t$ gives us
$$
\int_0^1\frac{u^{m+1}du}{(\frac1k+u^2)^{m/2+1}}=\int_0^{\argsh\sqrt k}\th^{m+1} t\,dt
$$
Since $\th^{m+1}t$ tends to 1 when $t\to+\infty$, it follows that when $k\to+\infty$
$$
\int_0^{\argsh\sqrt k}\th^{m+1}dt\mathop{\sim}_{k\to+\infty}\int_0^{\argsh\sqrt k}dt=\argsh\sqrt k\mathop{\sim}_{k\to+\infty}\frac12\ln k.
$$
The proposition is well proven..

\end{proof}

\begin{prop}\label{Equivalent3}
When $k\to+\infty$ and $m\in\C$
$$
\int_{\theta=0}^\pi\frac{\sin^{m-1}\theta\,d\theta}{(1+k\sin^2\frac\theta2)^{m/2+1}}\mathop\sim_{k\to+\infty}
\frac{2^m}{mk^{\frac m2}}
$$
\end{prop}

\begin{proof}
Putting as previously $u=\sin\frac\theta2$, this integral is equal to
$$
2^m\int_0^1\frac{u^{m-1}(1-u^2)^{\frac{m-2}2}du}{(1+ku^2)^{m/2+1}}={2^m\over k^{m/2+1}}\int_0^1\frac{u^{m-1}(1-u^2)^{\frac{m-2}2}du}{(\frac1k+u^2)^{m/2+1}}.
$$
However
$$
\int_0^1\frac{u^{m-1}(1-u^2)^{\frac{m-2}2}du}{(\frac1k+u^2)^{m/2+1}}-\int_0^1\frac{u^{m-1}du}{(\frac1k+u^2)^{m/2+1}}=
-\int_0^1\frac{u^{m-1}}{\left(\frac1k+u^2\right)^{m/2+1}}(1-(1-u^2)^{\frac{m-2}2})du
$$

We first estimate the right hand side of this equality :

$$
\int_0^1\frac{u^{m-1}}{\left(\frac1k+u^2\right)^{m/2+1}}(1-(1-u^2)^{\frac{m-2}2})du-\int_0^1\frac{u^{m-1}}{\left(\frac1k+u^2\right)^{m/2+1}}\left(\frac{m-2}{2}u^2\right)du
$$
$$
=\int_0^1\frac{u^{m-1}}{\left(\frac1k+u^2\right)^{m/2+1}}\left(1-\frac{m-2}{2}u^2-(1-u^2)^{\frac{m-2}2}\right)du
$$
$$
\mathop{\longrightarrow}_{k\to+\infty} \int_0^1\frac{u^{m-1}}{\left(u^2\right)^{m/2+1}}\left(1-\frac{m-2}{2}u^2-(1-u^2)^{\frac{m-2}2}\right)du
$$
$$
\qquad\qquad=\int_0^1\frac{1-\frac{m-2}{2}u^2-(1-u^2)^{\frac{m-2}2}}{u^3}du. \eqno{(*)}
$$
As seen in the proof of Proposition \ref{Equivalent2}, we have
$$
{m-2\over 2}\int_{0}^1 {u^{m+1}\over ({1\over k}+u^2)^{{m\over 2}+1}}du\mathop\sim_{k\to+\infty}{m-2\over 4}\ln k.   \eqno{(**)}
$$
Through (*) and (**), one obtains :
$$
\int_0^1\frac{u^{m-1}}{\left(\frac1k+u^2\right)^{m/2+1}}(1-(1-u^2)^{\frac{m-2}2})du\mathop\sim_{k\to+\infty}{m-2\over 4}\ln k.
$$

The change of variable $u=\frac1{\sqrt k}\sh t$ gives us
$$
\int_0^1\frac{u^{m-1}du}{(\frac1k+u^2)^{m/2+1}}=k\int_0^{\argsh\sqrt k}{\th^{m-1} t\over\ch^2 t}\,dt={k\over m}\th^m\left(\argsh\sqrt k\right).
$$
It follows that when $k\to+\infty$,
$$
\int_0^1\frac{u^{m-1}du}{(\frac1k+u^2)^{m/2+1}}\mathop\sim_{k\to+\infty} {k\over m}.
$$

We thus obtain
$$
\int_0^1\frac{u^{m-1}(1-u^2)^{\frac{m-2}2}du}{(\frac1k+u^2)^{m/2+1}}\mathop\sim_{k\to+\infty} {k\over m}.
$$
And
$$
\int_{\theta=0}^\pi\frac{\sin^{m-1}\theta\,d\theta}{(1+k\sin^2\frac\theta2)^{m/2+1}}\mathop\sim_{k\to+\infty}
\frac{2^m}{mk^{\frac m2}}
$$
and this completes the proof.
\end{proof}

Let us return to the proof of Theorem \ref{theo}.

The Proposition \ref{Equivalent1}  shows that
$$
J_{\e,1}\mathop{\sim}_{\e\to0+}-\frac m{2\pi}\int_{t\in[0,2\pi]\atop
(\xi,\eta)=(x,y)+\e(\cos t,\sin t)}u\frac{x^{m-1}}{\e^m}\frac{2^{m-1}}{k^{m/2}}(\ln k)\,\e\cos t\,dt
$$
$$
\mathop{\sim}_{\e\to0+}+\frac m{2\pi x}\e\ln \e\left(\int_{t\in[0,2\pi]\atop
(\xi,\eta)=(x,y)+\e(\cos t,\sin t)}u(x+\e\cos t,y+\e\sin t)\cos t\,dt\right)
$$ 
which tends to 0.

The Proposition \ref{Equivalent2}  shows that
$$
J_{\e,3}\mathop{\sim}_{\e\to0+}\frac m{2\pi}\int_{t\in[0,2\pi]\atop
(\xi,\eta)=(x,y)+\e(\cos t,\sin t)}u\frac{x^m}{\e^{m+2}}(2x)\frac{2^{m-1}}{k^{m/2+1}}(\ln k)\,\e\cos t\,dt
$$
$$
\mathop{\sim}_{\e\to0+}-\frac m{4\pi x}\e\ln \e\left(\int_{t\in[0,2\pi]\atop
(\xi,\eta)=(x,y)+\e(\cos t,\sin t)}u(x+\e\cos t,y+\e\sin t)\cos t\,dt\right)
$$
which tends to 0.

Finally, the proposition \ref{Equivalent3} shows that
$$
J_{\e,2}\mathop{\sim}_{\e\to0+}\frac m{2\pi}\int_{t\in[0,2\pi]\atop
(\xi,\eta)=(x,y)+\e(\cos t,\sin t)}u\frac{x^m}{\e^{m+2}}\frac{2^m}{mk^{m/2}}\e^2\,dt
$$
$$
\mathop{\sim}_{\e\to0+}\frac 1{2\pi}\int_{t\in[0,2\pi]\atop
(\xi,\eta)=(x,y)+\e(\cos t,\sin t)}u(x+\e\cos t,y+\e\sin t)dt\mathop{\longrightarrow}_{\e\to0+}u(x,y).
$$

%

So we have proved that for all $m\in\C$ such that Re $m>0$,
$$
\lim_{\e\to0+}\int_{\H^+\setminus D((x,y),\e)} L_m(u)(\xi,\eta)E_m(x,y,\xi,\eta) d\xi d\eta=\qquad\qquad\qquad\qquad
$$
$$
\qquad\qquad\qquad\qquad=
\int_{\H^+} L_m(u)(\xi,\eta)E_m(x,y,\xi,\eta) d\xi d\eta=u(x,y)
$$
therefore that $E_m$ is indeed a  fundamental solution of $L_m^\star$ for all $m\in\C$ with Re $m>0$.

Proof for $m\in\C$ with Re $m\leq1$ is completely similar.

We also have the dual assertions for fundamental solutions of $L_m$ for all $m\in\C$ thanks to Proposition \ref{conjugaison}.

\end{proof}

The following proposition is roughly a consequence of the previous theorem.
\begin{prop} \label{RepresentationIntegrale}
Let $m\in\C$ and let $\Omega$ be a relatively compact open set of $\H^+$ whose boundary is piecewise $C^1$-differentiable.

Then, for $(x,y)\in\Omega$ and $u\in{C}^2(\overline\Omega)$, by denoting $\vec n$ the outer unit normal vector to $\d\Omega$ and $ds$ the arc length element on $\d\Omega$ (positively oriented), we have
$$
u(x,y)=\int_\Omega L_m(u)E_m\,d\xi d\eta\qquad\qquad\qquad\qquad\qquad\qquad\qquad\qquad\qquad\qquad\qquad\qquad\qquad
$$
$$
\qquad\qquad\qquad\qquad-\int_{\d\Omega}\left[(\d_\xi u)E_m-u(\d_\xi E_m)+\frac m\xi uE_m\,,\,(\d_\eta u)E_m-u(\d_\eta E_m)\right]\cdot \,\vec n\,ds
$$
where $u:=u(\xi,\eta)$ and $E_m:=E_m(x,y,\xi,\eta)$.
\end{prop}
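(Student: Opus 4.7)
The plan is to mirror the proof of Theorem \ref{theo}, but instead of assuming that $u$ is compactly supported we retain the surface contribution along $\partial\Omega$. Fix $(x,y)\in\Omega$ and choose $\varepsilon>0$ small enough that $\overline{D((x,y),\varepsilon)}\subset\Omega$; set $\Omega_\varepsilon:=\Omega\setminus\overline{D((x,y),\varepsilon)}$.

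The first step is to recall the Lagrange-type identity already derived in the proof of Theorem~\ref{theo}, namely
$$
L_m(u)E_m-uL_m^\star(E_m)=\operatorname{div}(X),
$$
where $X=\bigl((\d_\xi u)E_m-u(\d_\xi E_m)+\tfrac{m}{\xi}uE_m\,,\,(\d_\eta u)E_m-u(\d_\eta E_m)\bigr)$. Since the previous proposition ensures that $L_{m,\xi,\eta}^\star E_m(x,y,\cdot,\cdot)=0$ on $\Omega_\varepsilon$, this reduces on $\Omega_\varepsilon$ to $L_m(u)E_m=\operatorname{div}(X)$. Applying the divergence theorem recalled above to $\Omega_\varepsilon$ (whose boundary is $\partial\Omega$, with the outer normal $\vec n$ of $\Omega$, together with $\partial D((x,y),\varepsilon)$, whose outer normal with respect to $\Omega_\varepsilon$ points inward toward $(x,y)$) gives
$$
\int_{\Omega_\varepsilon}L_m(u)E_m\,d\xi d\eta=\int_{\partial\Omega}X\cdot\vec n\,ds-\int_{\partial D((x,y),\varepsilon)}X\cdot\vec n_D\,ds,
$$
where $\vec n_D$ denotes the outward unit normal to the disk $D((x,y),\varepsilon)$.

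The second step is to let $\varepsilon\to0^+$. By Proposition~\ref{estimation}, $E_m(x,y,\cdot,\cdot)$ has only a logarithmic singularity at $(x,y)$, hence $L_m(u)E_m\in L^1(\Omega)$ and the left-hand side converges to $\int_\Omega L_m(u)E_m\,d\xi d\eta$. The boundary integral over $\partial\Omega$ is independent of $\varepsilon$ and is precisely the term that appears in the statement (with the sign indicated). For the integral over the small circle, parametrising $(\xi,\eta)=(x,y)+\varepsilon(\cos t,\sin t)$ converts it into exactly the quantity analysed in the proof of Theorem~\ref{theo}: the contribution involving $(\d_\xi u)E_m$, $(\d_\eta u)E_m$ and $\tfrac{m}{\xi}uE_m$ vanishes as $\varepsilon\to0$ (by the logarithmic control on $E_m$ and the factor of $\varepsilon$), while the remaining $J_{\varepsilon,1}$, $J_{\varepsilon,2}$, $J_{\varepsilon,3}$ pieces, handled by Propositions~\ref{Equivalent1}, \ref{Equivalent2} and \ref{Equivalent3}, combine to produce the mean of $u$ on the circle, whose limit is $u(x,y)$. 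Rearranging yields the claimed representation.

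The only genuinely new aspect compared with Theorem~\ref{theo} is accounting for the boundary contribution on $\partial\Omega$ and verifying the sign conventions carefully, since all the delicate asymptotics of the elliptic integrals near the singular point have already been established; as in the theorem, the case $\operatorname{Re}m<1$ is treated identically using the alternative expression of $E_m$, and the slight technical point to monitor is merely that $u\in C^2(\overline\Omega)$ suffices (rather than $\mathcal D(\H^+)$) because only the behaviour of $u$ in a neighbourhood of $(x,y)$ and continuity up to $\partial\Omega$ enter the limiting argument.
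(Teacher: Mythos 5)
Your proposal is correct and mirrors the paper's own argument: both apply the divergence theorem to $\Omega\setminus\overline{D((x,y),\varepsilon)}$ using the Lagrange identity $L_m(u)E_m-uL_m^\star(E_m)=\operatorname{div}(X)$, keep the fixed boundary term on $\partial\Omega$, and reuse the asymptotics of Propositions~\ref{Equivalent1}--\ref{Equivalent3} from Theorem~\ref{theo} to show the inner-circle contribution produces $u(x,y)$ in the limit. The only small inaccuracy is the phrase that $J_{\varepsilon,1},J_{\varepsilon,2},J_{\varepsilon,3}$ ``combine to produce the mean of $u$'': in the paper only $J_{\varepsilon,2}$ survives (the other two vanish), but this does not affect the validity of the argument.
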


\begin{proof}
Indeed, if $u\in{C}^2(\overline\Omega)$, we have for $(x,y)\in\Omega$ and $\e>0$ such that $\overline{D((x,y),\e)}\subset\Omega$ :
$$
\int_{\Omega\setminus D((x,y),\e)} L_m(u)E_m d\xi\,d\eta=\int_{\Omega\setminus D((x,y),\e)} (L_m(u)E_m -L_m^\star(E_m)u)d\xi\,d\eta.
$$
Thanks to the Green formula previously recalled, this last integral is equal to
$$
\int_{\d\Omega}\left[(\d_\xi u)E_m-u(\d_\xi E_m)+\frac m\xi uE_m\,,\,(\d_\eta u)E_m-u(\d_\eta E_m)\right]\cdot \,\vec n\,ds
$$
$$
-\int_{t\in[0,2\pi]\atop
(\xi,\eta)=(x,y)+\e(\cos t ,\sin t)}\left(\left((\d_\xi u)E_m-u(\d_\xi E_m)+\frac m\xi uE_m\right)\cos t+\right.\qquad\qquad\qquad\qquad
$$
$$
\qquad\quad\qquad\qquad\qquad\qquad\qquad\qquad\qquad\qquad\qquad\Bigl.+\left((\d_\eta u)E_m-u(\d_\eta E_m)\right)\sin t\Bigr) \e dt,
$$
and from what we saw in the previous proof, this last expression tends to
$$
\int_{\d\Omega}\left[(\d_\xi u)E_m-u(\d_\xi E_m)+\frac m\xi uE_m\,,\,(\d_\eta u)E_m-u(\d_\eta E_m)\right]\cdot \,\vec n\,ds+u(x,y),
$$
when $\e\to0$.

The integrability nature of $E_m$ near $(x,y)$ shows that
$$
\lim_{\e\to0}\int_{\Omega\setminus D((x,y),\e)} L_m(u)E_m d\xi\,d\eta=\int_{\Omega} L_m(u)E_m d\xi\,d\eta,
$$
and the proof of the proposition is complete.
\end{proof}

\section{Liouville-type result and decomposition theorem for the axisymmetric potentials}

In the previous section, we have just seen that there are two different expressions of the fundamental solutions depending on the values ​​of $m$. For the rest, each of the expressions have different behaviors according to the value of $m$. We will look at the two cases separately : Re $m<1$ and Re $m\geq1$.

More specifically, we need fundamental solutions which vanish at the boundary of $\H^+$, 
that is to say zero on the $y$-axis and zero to infinity.

For Re $m<1$, the formula
$$
E_m(x,y,\xi,\eta)=-{\xi x^{1-m}\over 2\pi}\int_{\theta=0}^{\pi}{\sin^{1-m}\theta \, d\theta \over \left[(x-\xi)^2+4x\xi\sin^2\left({\theta\over 2}\right)+(y-\eta)^2\right]^{1-{m\over2}}}
$$
shows that $E_m$ satisfies this property ($E_m(x,y,\cdot,\cdot)$ tends to 0 when $x\to0+$ and $\|(x,y)\|\to+\infty$).

For Re $m\geq1$, the expression
$$
E_m(x,y,\xi,\eta)=-\frac{\xi^m}{2\pi}\int_{\theta=0}^{\pi}\frac{\sin^{m-1}\theta\,d\theta}{[(x-\xi)^2+4x\xi\sin^2\left(\frac\theta2\right)+(y-\eta)^2]^{m/2}}
$$
no longer satisfies this property. Contrariwise,
$$
E_m(x,y,\xi,\eta)-E_m(-x,y,\xi,\eta)
$$
is also a fundamental solution on $\H^+$, and satisfies this property.

Then, we will put

- For Re $m<1$ :
$$
F_m(x,y,\xi,\eta)=E_m(x,y,\xi,\eta)
$$
- For Re $m\ge1$ :
$$
F_m(x,y,\xi,\eta)=E_m(x,y,\xi,\eta)-E_m(-x,y,\xi,\eta).
$$

We will need the following definition of convergence to the boundary of $\H^+$.

\bigskip\noindent
{\bf Definition.} Let $u:\H^+\to\R$ be a function defined on $\H^+$. We write
$$
\lim_{\d\H^+}u=0
$$
if and only if
$$
\forall \e>0,\qquad \exists N\in\N,\qquad \forall n\geq N,\qquad\forall (x,y)\in\H^+,\qquad\qquad\qquad\qquad\qquad\qquad\qquad$$
$$
 x\leq{1\over n}\hbox{ \rm or } \|(x,y)\|\geq n\quad\Longrightarrow\quad |u(x,y)|\leq \e.
$$
\bigskip
\noindent

In other words, this amounts to considering that the boundary $\d\H^+$ of $\H^+$ consists of $y$-axis points and points at infinity and to say that the concept of punctual convergence to the boundary of $\H^+$ is a uniform convergence. Indeed, we do not need the uniform convergence. More precisely, we have the following proposition :

\begin{prop}
Let $u:\H^+\to\C$. We have
$$
\lim_{\d\H^+}u=0
$$
if and only if
$$
\lim_{\|(x,y)\|\to+\infty}u(x,y)=0\qquad\hbox{and}\qquad\forall y\in\R,\quad \lim_{(0,y)}u=0.
$$
\end{prop}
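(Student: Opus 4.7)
The forward implication is immediate from the definition: if the uniform-type convergence $\lim_{\partial \H^+} u = 0$ holds, then fixing any sequence of points $(x_k, y_k)$ in $\H^+$ with either $\|(x_k,y_k)\| \to \infty$ or $(x_k,y_k) \to (0,y_0)$ for some $y_0 \in \R$, for all sufficiently large $k$ we have $x_k \leq 1/N$ or $\|(x_k,y_k)\| \geq N$ for any prescribed $N$, so $u(x_k,y_k) \to 0$. So the work is entirely in the reverse implication.

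For the converse, assume both pointwise limits. Given $\varepsilon > 0$, I would first invoke $\lim_{\|(x,y)\| \to \infty} u(x,y) = 0$ to extract an $R > 0$ such that $\|(x,y)\| \geq R$ forces $|u(x,y)| < \varepsilon$. The issue remaining is to control $u$ on the strip $\{(x,y) \in \H^+ : 0 < x \leq \delta,\ |y| \leq R\}$ for $\delta$ small, since this is where points can satisfy $x \leq 1/n$ without already being handled by the decay at infinity.

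The key step is a compactness argument applied to the compact set $K := \{0\} \times [-R, R]$ in $\overline{\H^+}$. For every $y_0 \in [-R, R]$, the hypothesis $\lim_{(0,y_0)} u = 0$ produces a radius $\delta(y_0) > 0$ such that points of $\H^+$ at Euclidean distance less than $\delta(y_0)$ from $(0, y_0)$ satisfy $|u| < \varepsilon$. Covering $K$ with the open balls $B((0,y_0), \delta(y_0)/2)$ and extracting a finite subcover indexed by $y_1, \ldots, y_p$, I would set $\delta := \tfrac{1}{2}\min_i \delta(y_i)$. Then for any $(x,y) \in \H^+$ with $x < \delta$ and $|y| \leq R$, the point $(0,y)$ lies in some $B((0, y_i), \delta(y_i)/2)$, and the triangle inequality yields $\|(x,y) - (0, y_i)\| < \delta(y_i)$, hence $|u(x,y)| < \varepsilon$.

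Finally, choosing $N \in \N$ with $N \geq R$ and $1/N \leq \delta$, I conclude: if $x \leq 1/N$, then either $|y| \leq R$ (handled by the strip estimate) or $|y| > R$ so $\|(x,y)\| > R$ (handled by decay at infinity); and if $\|(x,y)\| \geq N \geq R$, decay at infinity applies directly. This gives $|u(x,y)| \leq \varepsilon$ whenever $x \leq 1/n$ or $\|(x,y)\| \geq n$ for every $n \geq N$, which is exactly $\lim_{\partial \H^+} u = 0$. There is no real obstacle here; the only subtlety is recognizing that the hypothesis along the $y$-axis is purely pointwise and must be upgraded to a uniform statement on each bounded portion of the axis, which is handled entirely by the finite subcover.
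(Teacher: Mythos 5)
Your proof is correct and takes essentially the same route as the paper: both reduce the problem to a bounded segment $\{0\}\times[-R,R]$ of the $y$-axis (with $R$ supplied by the decay at infinity) and then upgrade the pointwise hypothesis to a uniform one by compactness. The paper invokes the Lebesgue number lemma on the cover of $[-A,A]$, while you extract a finite subcover of halved-radius balls and take $\delta$ to be half the minimum radius; these are two standard and interchangeable ways of producing the same uniform margin.
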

\begin{proof}
The direct implication is easy. Conversely, we assume
$$
\lim_{\|(x,y)\|\to+\infty}u(x,y)=0\qquad\hbox{and}\qquad\forall y\in\R,\quad \lim_{(0,y)}u=0
$$
and we have to show $\lim_{\d\H^+}u=0$.

Let $\e>0$. There is $A>0$ such that for all $(\xi,\eta)\in\H^+$, 
$$
\sqrt{\xi^2+\eta^2}\geq A\qquad\Rightarrow\qquad |u(\xi,\eta)|\leq\e.
$$
Similarly, for all $y\in\R$, there is $\alpha_y\in]0,1[$ such that for all $(\xi,\eta)\in\H^+$
$$
\sqrt{\xi^2+(\eta-y)^2}<\alpha_y\qquad\Rightarrow |u(\xi,\eta)|\leq\e.
$$
The interval $[-A,A]$ is compact. 

By the Lebesgue covering lemma, there is $\alpha>0$ such that for all $y'\in[-A,A]$, the ball $B(y',\alpha)$ is included in one of the balls $B(y,\alpha_{y})$ with $y\in[-A,A]$. 

In particular, if $(\xi,\eta)\in\H^+$ is such that $0<\xi<\alpha$, then $|u(\xi,\eta)|\leq\e$.
This completes the proof.
\end{proof}

\bigskip
The following proposition is a Liouville-type result for the axisymmetric potentials in the right half-plane and this result is not immediate because there is the loss of strict ellipticity of the Weinstein operator on the $y$-axis. In \cite{BBC} (see Theorem 7.1), we can found an interesting result on the description of a class of non-strictly elliptic equations with unbounded coefficients.

\begin{prop}\label{Unicite}
Let $u\in C^2(\H^+)$ such that $L_mu=0$ and $\displaystyle \lim_{\d\H^+}u=0$.
Then $u\equiv 0$ on $\H^+$.
 \end{prop}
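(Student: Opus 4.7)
Fix $(x_0,y_0)\in\H^+$; the goal is to show $u(x_0,y_0)=0$. The idea is to apply the integral representation of Proposition \ref{RepresentationIntegrale}, but with the modified fundamental solution $F_m$ in place of $E_m$ (recall $F_m$ was precisely designed to vanish on $\partial\H^+$). This substitution is legitimate because $F_m-E_m$ is smooth on $\H^+$ and annihilated by $L^\star_{m,\xi,\eta}$, so the derivation of the representation carries over verbatim. I apply it on the exhausting family
$\Omega_R=\{(\xi,\eta)\in\H^+ : \xi>1/R,\ \xi^2+\eta^2<R^2\}$,
whose boundary splits as $\Gamma_R^{\rm in}\cup\Gamma_R^{\rm out}$, with $\Gamma_R^{\rm in}=\{\xi=1/R\}\cap\overline{\Omega_R}$ and $\Gamma_R^{\rm out}=\{\xi^2+\eta^2=R^2\}\cap\overline{\Omega_R}$. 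Since $L_mu=0$, the volume term vanishes and
$$
u(x_0,y_0) = -\int_{\partial\Omega_R}\bigl[(\partial_\xi u)F_m - u\partial_\xi F_m + \tfrac{m}{\xi}uF_m,\ (\partial_\eta u)F_m - u\partial_\eta F_m\bigr]\cdot\vec n\,ds.
$$
The plan is to let $R\to\infty$ and prove the boundary integral vanishes.

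\textbf{Estimates on $F_m$ and on $\nabla u$.} Two ingredients drive the argument. First, direct inspection of the explicit formulas for $F_m$, distinguishing Re $m<1$ (where the factor $\xi$ is already present in the formula) from Re $m\geq 1$ (where the antisymmetric combination $E_m(x_0,y_0,\cdot)-E_m(-x_0,y_0,\cdot)$ produces the $\xi$-vanishing on the axis together with extra cancellation at infinity), yields the bounds $|F_m|=O(\xi)$, $|(m/\xi)F_m|=O(1)$, $|\partial_\xi F_m|,|\partial_\eta F_m|=O(1)$ near $\{\xi=0\}$, with polynomial $|\eta|$-decay making these quantities uniformly integrable in $\eta$ as $\xi\to 0$; analogous polynomial decay holds on $\Gamma_R^{\rm out}$. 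Second, since $u$ is $C^2$ and satisfies the strictly elliptic equation $L_mu=0$ in the interior of $\H^+$, interior Schauder estimates apply: on the ball $B((\xi,\eta),\xi/2)$, rescaled to the unit ball, the first-order coefficient $m/\xi$ becomes a coefficient of size $|m|$, so the Schauder constant depends only on $|m|$ and one obtains
$$
|\nabla u(\xi,\eta)|\leq C_m\,\xi^{-1}\sup_{B((\xi,\eta),\xi/2)}|u|,
$$
with the analogous bound at infinity using $B((\xi,\eta),|(\xi,\eta)|/2)$.

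\textbf{Conclusion.} On $\Gamma_R^{\rm in}$, the hypothesis $\lim_{\partial\H^+}u=0$ gives $\varepsilon(R):=\sup_{\xi\leq 2/R}|u|\to 0$, and hence $|\nabla u|\leq C_m R\,\varepsilon(R)$ there by the Schauder estimate. Each of the terms in the integrand is thus pointwise bounded by $C\varepsilon(R)$ (e.g.\ $|F_m\partial_\xi u|\lesssim (1/R)\cdot R\varepsilon=\varepsilon$ and $|(m/\xi)uF_m|\lesssim R\cdot\varepsilon\cdot(1/R)=\varepsilon$). The uniform $\eta$-integrability of $F_m$ and $\partial F_m$ then bounds the full boundary integral by $C\varepsilon(R)\to 0$. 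An entirely analogous argument on $\Gamma_R^{\rm out}$, using the polynomial decay of $F_m$ at infinity together with the uniform smallness of $u$ at infinity, gives $\int_{\Gamma_R^{\rm out}}\to 0$. Passing to the limit yields $u(x_0,y_0)=0$; since $(x_0,y_0)$ was arbitrary, $u\equiv 0$ on $\H^+$.

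\textbf{Principal obstacle.} The chief difficulty is the gradient estimate on $u$ near the degenerate boundary $\{\xi=0\}$: a naive Schauder estimate would give a constant depending on the $L^\infty$ norm of the coefficient $m/\xi$, which blows up as $\xi\to 0$. The key observation is that rescaling the ball $B((\xi,\eta),\xi/2)$ to unit size transforms $m/\xi$ into a bounded quantity of order $|m|$, so that a scale-invariant gradient estimate becomes available. The only other step requiring real work is the uniform $|\eta|$-integrability of $F_m$ and its first derivatives; in the case Re $m\geq 1$ this requires carefully extracting the cancellation in the antisymmetric reflection formula.
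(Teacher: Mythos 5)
Your argument is correct in outline, but it takes a genuinely different route from the paper's. The paper never produces a boundary integral at all: it multiplies $u$ by a cutoff $\phi_N$ supported in a box $\{1/(2N)\le\xi\le N,\,|\eta|\le N\}$, applies the representation formula to the compactly supported function $u\phi_N$, and then \emph{integrates by parts} so that all derivatives fall on $\phi_N$ and on $F_m$ — never on $u$. The whole argument then reduces to explicit estimates of $\int_{D_i}|F_m|$, $\int_{D_i}|\nabla F_m|$ on the ``frame'' $D_1\cup\dots\cup D_8$ where $\nabla\phi_N\ne0$, multiplied against the explicit bounds $\sup_{D_i}|\nabla\phi_N|$ and $\sup_{D_i}|L_{-m}\phi_N|$, together with $\sup_{D_i}|u|\to0$. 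The paper thus needs no information whatsoever about $\nabla u$ and no elliptic regularity theory.

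You instead apply Green's identity directly on the exhausting domains $\Omega_R$ and must control the boundary flux, which forces you to estimate $\nabla u$ on $\partial\Omega_R$. You correctly identify this as the crux and resolve it with the right tool: rescaling $B((\xi,\eta),\xi/2)$ to unit size turns the singular coefficient $m/\xi$ into the bounded coefficient $\tfrac{m}{2(1+s/2)}$ on $B(0,1)$, so interior Schauder estimates give $|\nabla u(\xi,\eta)|\le C_m\,\xi^{-1}\sup_{B((\xi,\eta),\xi/2)}|u|$ with a constant depending only on $|m|$. This is sound, and combined with your bounds $|F_m|=\O(\xi)$ (built in for ${\rm Re}\,m<1$, obtained by a mean-value/reflection cancellation for ${\rm Re}\,m\ge1$ — exactly the inequality the paper records as (\ref{fmbis})), it closes the argument. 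Be a little careful about which ball you use at infinity: $B((\xi,\eta),|(\xi,\eta)|/2)$ need not lie in $\H^+$ when $\xi$ is small relative to $|\eta|$, so on the arc $\Gamma_R^{\rm out}$ you should stick with radius $\xi/2$ and let the factor $\xi^{\Re m}$ in $|F_m|$ absorb the $\xi^{-1}$ loss. What each approach buys: yours is conceptually more direct (a genuine Green's formula on exhausting domains, very close to the classical Liouville argument for $\Delta$), at the cost of importing a scale-invariant elliptic gradient estimate; the paper's is more elementary and self-contained — it stays within the explicit-estimate framework it set up for $E_m$ and $F_m$ and uses nothing beyond integration by parts — at the cost of the heavier bookkeeping on the eight regions $D_i$.
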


\begin{proof}

For $(\xi,\eta)\in\H^+$ and $N\in\N^*$, we define
$$
\phi_N(\xi,\eta)=\theta_1(N\xi)\theta_2\left({\xi\over N}\right)\theta_2\left({\eta\over N}\right)
$$
where $\theta_1$ and $\theta_2$ are smooth functions on $\R$, valued on $[0,1]$ and such that $\theta_1(t)=1$ for $t\geq 1$, $\theta_1(t)= 0$ for $t\leq {1\over2}$, $\theta_2(t)=1$ for $t\in\left[-{1\over2},{1\over 2}\right]$ and $\theta_2(t)=0$ for $t\in\R\setminus\left]-1,1\right[$. We assume that all derivatives of $\theta_1$ and $\theta_2$ vanish at $\left\{-1,-{1\over2},{1\over 2},1\right\}$.

\begin{multicols}{2}
\resizebox{6cm}{!}{
\begin{tikzpicture}
\draw[->] (-4,0) -- (4,0);
\draw[->] (0,-0.5) -- (0,1.5);
\draw[thick,red,-] (0.5,0) .. controls (0.7,0) and (0.8,1) .. (1,1);
\draw [thick,red,-] (1,1)--(3,1);
\draw [thick,red,-] (-3,0)--(0.5,0);
\draw[thick,dotted,-] (0,1)--(1,1);
\draw[thick,dotted,-] (1,0)--(1,1);
\draw (0,1) node[left] {$1$};
\draw (1,0) node[below] {$1$};
\draw (0.5,0) node[below] {${1\over2}$};
\draw[red] (2,1) node[above] {$\theta_1$};
\end{tikzpicture}}

\resizebox{6cm}{!}{
\begin{tikzpicture}
\draw[->] (-4,0) -- (4,0);
\draw[->] (0,-0.5) -- (0,1.5);
\draw[thick,blue,-] (-1,0) .. controls (-0.8,0) and (-0.7,1) .. (-0.5,1);
\draw[thick,blue,-] (1,0) .. controls (0.8,0) and (0.7,1) .. (0.5,1);
\draw [thick,blue,-] (1,0)--(3,0);
\draw [thick,blue,-] (-3,0)--(-1,0);
\draw [thick,blue,-] (-0.5,1)--(0.5,1);
\draw[thick,dotted,-] (0.5,0)--(0.5,1);
\draw (0,1.2) node[left]{$1$};
\draw (1,0) node[below] {$1$};
\draw (0.5,0) node[below] {${1\over2}$};
\draw[blue] (2,0) node[above] {$\theta_2$};
\end{tikzpicture}}
\end{multicols}

If $u\in{C}^2(\H^+)$ satisfies $L_mu=0$, then $u\phi_N\in{C}^2(\H^+)$ and is compactly supported on $\H^+$. Throughout the following, we fix $(x,y)\in\H^+$. For $N$ sufficiently large, thanks to Proposition \ref{RepresentationIntegrale} (true if $E_m$ is replaced by $F_m$), we have
$$
u(x,y)=u(x,y)\phi_N(x,y)=\int_{\H^+} L_m(u\phi_N)F_m\,d\xi d\eta
$$
(because the function $L_m(u\phi_N)$  is identically zero in a neighborhood of the singularity of $F_m$), thus
$$
u(x,y)
=\int_{\H^+} [L_m(u)\phi_N+uL_m(\phi_N)+2\nabla u\cdot\nabla\phi_N]F_m\,d\xi d\eta
$$
$$
=\int_{\H^+}u [L_m(\phi_N)F_m-2 \div\left(F_m\nabla\phi_N\right)]d\xi d\eta
$$
$$
=\int_{D_1\cup\cdots\cup D_8}u [L_m(\phi_N)F_m-2 \div\left(F_m\nabla\phi_N\right)]d\xi d\eta
$$
$$
=-\int_{D_1\cup\cdots\cup D_8}u [L_{-m}(\phi_N)F_m+2 \nabla F_m\cdot\nabla\phi_N]d\xi d\eta
$$
where $D_1,\ldots, D_8$ are the following domains (which depend of  N) :
$$
D_1=\left[{1\over 2N},{1\over N}\right] \times \left[-{N\over 2},{N\over 2}\right],
\quad
D_2=\left[{1\over N},{N\over 2}\right] \times\left[{N\over 2},N\right],
$$

$$
D_3=\left[{N\over 2},N\right] \times \left[-{N\over 2},{N\over 2}\right],
\quad
D_4=\left[{1\over N},{N\over 2}\right] \times \left[-N,-{N\over 2},\right],
$$

$$
D_5=\left[{1\over 2N},{1\over N}\right] \times\left[{N\over 2},N\right],
\quad
D_6=\left[{N\over 2},N\right] \times\left[{N\over 2},N\right],
$$

$$
D_7=\left[{N\over 2},N\right] \times\left[-N,-{N\over 2}\right]
\quad{\rm and}\quad
D_8=\left[{1\over 2N},{1\over N}\right] \times\left[-N,-{N\over 2}\right].
$$

\hfil
\resizebox{6cm}{!}{
\begin{tikzpicture}

\draw (0.5,-4.8) node{Figure  : Domains $D_i$};

\draw[-,thick,fill=blue!15!white,opacity=0.8] (-2.5,-3)--(-2.5,3)--(3,3)--(3,-3)--cycle;
\draw[-,thick,fill=red!15!white,opacity=0.8] (-2.5,3)--(-2,3)--(-2,2)--(-2.5,2)--cycle;

\draw[-,thick,fill=red!15!white,opacity=0.8] (-2.5,-3)--(-2,-3)--(-2,-2)--(-2.5,-2)--cycle;

\draw[-,thick,fill=red!15!white,opacity=0.8] (2,3)--(3,3)--(3,2)--(2,2)--cycle;

\draw[-,thick,fill=red!15!white,opacity=0.8] (2,-3)--(3,-3)--(3,-2)--(2,-2)--cycle;

\draw[-,thick,fill=white] (-2,-2)--(-2,2)--(2,2)--(2,-2)--cycle;

\draw[thick] (0,2.5) node {$D_2$};
\draw[thick] (0,-2.5) node {$D_4$};
\draw[thick] (2.5,2.5) node {$D_6$};
\draw[thick] (2.5,-2.5) node {$D_7$};
\draw[thick] (2.5,0.5) node {$D_3$};
\draw[thick] (-2.25,0.5) node {$D_1$};
\draw[thick] (-2.25,2.5) node {$D_5$};
\draw[thick] (-2.25,-2.5) node {$D_8$};

\draw[<->,thick] (-3,1)--(-2.5,1);
\draw[thick] (-2.8,0.7) node {\, \scriptsize$ {1\over 2N}$\,\,};

\draw[->] (-4.,0) -- (4,0);
\draw[->] (-3,-4.5) -- (-3,4);

\draw[<->,thick] (-3,-1)--(-2,-1);
\draw[thick] (-2.8,-1.3) node {\, \scriptsize$ {1\over N}$\,\,};

\draw[<->,thick] (-3,-4)--(3,-4);
\draw[thick] (0,-4.2) node {\scriptsize $N$};
\draw[-,thick,dotted] (3,-4)--(3,-3);

\draw[<->,thick] (-3,-3.5)--(2,-3.5);
\draw[thick] (0,-3.7) node {\scriptsize $N/2$};
\draw[-,thick,dotted] (2,-3.5)--(2,-3);

\draw[thick] (0,0.7) node {$\phi_N\equiv 1$};

\draw[-,thick,dotted] (-2.5,-3)--(-3,-3);
\draw[thick] (-3,-3) node[left] {\, \scriptsize$- N$\,\,};

\draw[-,thick,dotted] (-2.5,-2)--(-3,-2);
\draw[thick] (-3,-2) node[left] {\, \scriptsize$- N/2$\,\,};

\draw[-,thick,dotted] (-2.5,3)--(-3,3);
\draw[thick] (-3,3) node[left] {\, \scriptsize$N$\,\,};

\draw[-,thick,dotted] (-2.5,2)--(-3,2);
\draw[thick] (-3,2) node[left] {\, \scriptsize$ N/2$\,\,};

\draw (3.8,0.2) node {$\xi$};
\draw (-2.7,3.8) node {$\eta$};

\end{tikzpicture}}
\hfill

Since $\lim_{\d\H^+}u=0$, then
$$
u_N:=\sup_{(\xi,\eta)\in D_1\cup\cdots\cup D_8}|u(\xi,\eta)|\mathop{\longrightarrow}_{N\to+\infty}0.
$$
We will estimate each integrals supported on $D_1,\ldots,D_8$. For this, we need the following lemmas which will give us estimates of each terms when $N$ tends to infinity. We recall that, if $(u_N)_N$ and $(v_N)_N$ are complex sequences, $u_N=\O(v_N)$ means that there exists a constant $M$ such that, for every $N$ sufficiently large, $|u_N|\leq M|v_N|$ ; $u_N=o(v_N)$ means that for every  $\e>0$, for every $N$ sufficiently large, $|u_N|\leq \e|v_N|$.
\bigskip

\begin{lem} On $D_1$, we have
$$
\sup\left|{\d\phi_N\over\d\xi}\right|=\O(N)\qquad\hbox{and}\qquad\sup\left|{\d\phi_N\over\d\eta}\right|=0.
$$
On $D_2\cup D_4$, we have
$$
\sup\left|{\d\phi_N\over\d\xi}\right|=0\qquad\hbox{and}\qquad\sup\left|{\d\phi_N\over\d\eta}\right|=\O\left({1\over N}\right).
$$
On $D_3$, we have
$$
\sup\left|{\d\phi_N\over\d\xi}\right|=\O\left({1\over N}\right)\qquad\hbox{and}\qquad\sup\left|{\d\phi_N\over\d\eta}\right|=0.
$$
On $D_5\cup D_8$, we have
$$
\sup\left|{\d\phi_N\over\d\xi}\right|=\O(N)\qquad\hbox{and}\qquad\sup\left|{\d\phi_N\over\d\eta}\right|=\O\left({1\over N}\right).
$$
On $D_6\cup D_7$, we have
$$
\sup\left|{\d\phi_N\over\d\xi}\right|=\O\left({1\over N}\right)\qquad\hbox{and}\qquad\sup\left|{\d\phi_N\over\d\eta}\right|=\O\left({1\over N}\right).
$$
On $D_1\cup D_5\cup D_8$, we have
$$
\sup\left|L_{-m}(\phi_N)\right|=\O(N^2).
$$
On $D_2\cup D_3\cup D_4\cup D_6\cup D_7$, we have
$$
\sup\left|L_{-m}(\phi_N)\right|=\O\left({1\over N^2}\right).
$$
\end{lem}

\begin{proof}

*For $(\xi,\eta)\in D_1$, $\phi_N(\xi,\eta)=\theta_1(N\xi)$ and thus
$$
{\d\phi_N\over\d\xi}(\xi,\eta)=N\theta_1'(N\xi)\qquad\hbox{,}\qquad{\d\phi_N\over\d\eta}(\xi,\eta)=0,
$$
$$
L_{-m}\phi_N(\xi,\eta)=N^2\theta_1''(N\xi)-{m \,N\over \xi}\theta_1'(N\xi),
$$
which give us
$$
\sup_{D_1}\left|{\d\phi_N\over\d\xi}\right|=\O(N),\qquad \sup_{D_1}\left|{\d\phi_N\over\d\eta}\right|=0,\qquad\sup_{D_1}|L_{-m}(\phi_N)|=\O(N^2)
$$
since the derivatives of $\theta_1$ are bounded and for $(\xi,\eta)\in D_1$, one gets $\xi\geq{1\over 2N}$.

\bigskip
*For $(\xi,\eta)\in D_2$, $\phi_N(\xi,\eta)=\theta_2\left({\eta\over N}\right)$ and thus
$$
{\d\phi_N\over\d\xi}(\xi,\eta)=0\qquad\hbox{,}\qquad{\d\phi_N\over\d\eta}(\xi,\eta)={1\over N}\theta_2'\left({\eta\over N}\right),
$$
$$
L_{-m}\phi_N(\xi,\eta)={1\over N^2}\theta_2''\left({\eta\over N}\right),
$$
which give us
$$
\sup_{D_2}\left|{\d\phi_N\over\d\xi}\right|=0,\qquad \sup_{D_2}\left|{\d\phi_N\over\d\eta}\right|=\O\left({1\over N}\right),\qquad\sup_{D_2}|L_{-m}(\phi_N)|=\O\left({1\over N^2}\right)
$$

*So does same with $D_4$.

\bigskip
* For $(\xi,\eta)\in D_3$, $\phi_N(\xi,\eta)=\theta_2\left({\xi\over N}\right)$ and thus
$$
{\d\phi_N\over\d\xi}(\xi,\eta)={1\over N}\theta_2'\left({\xi\over N}\right)\qquad\hbox{,}\qquad{\d\phi_N\over\d\eta}(\xi,\eta)=0,
$$
$$
L_{-m}\phi_N(\xi,\eta)={1\over N^2}\theta_2''\left({\xi\over N}\right)-{1\over N}{m\over\xi}\theta_2'\left({\xi\over N}\right),
$$
which give us
$$
\sup_{D_3}\left|{\d\phi_N\over\d\xi}\right|=\O\left({1\over N}\right),\qquad \sup_{D_3}\left|{\d\phi_N\over\d\eta}\right|=0,\qquad\sup_{D_3}|L_{-m}(\phi_N)|=\O\left({1\over N^2}\right)
$$
\bigskip
* For $(\xi,\eta)\in D_5$, $\phi_N(\xi,\eta)=\theta_1(N\xi)\theta_2\left({\eta\over N}\right)$ and thus
$$
{\d\phi_N\over\d\xi}(\xi,\eta)=N\theta_1'(N\xi)\theta_2\left({\eta\over N}\right)\qquad\hbox{,}\qquad{\d\phi_N\over\d\eta}(\xi,\eta)={1\over N}\theta_1(N\xi)\theta_2'\left({\eta\over N}\right),
$$
$$
L_{-m}\phi_N(\xi,\eta)=N^2\theta_1''(N\xi)\theta_2\left({\eta\over N}\right)+{1\over N^2}\theta_1(N\xi)
\theta_2''\left({\eta\over N}\right)-{m\over\xi}N\theta_1'(N\xi)\theta_2\left({\eta\over N}\right)
$$
which give us
$$
\sup_{D_5}\left|{\d\phi_N\over\d\xi}\right|=\O(N),\qquad \sup_{D_5}\left|{\d\phi_N\over\d\eta}\right|=\O\left({1\over N}\right),\qquad\sup_{D_5}|L_{-m}(\phi_N)|=\O(N^2).
$$

* So does same with  $D_8$.

\bigskip
*For $(\xi,\eta)\in D_6$, $\phi_N(\xi,\eta)=\theta_2\left({\xi\over N}\right)\theta_2\left({\eta\over N}\right)$ and thus
$$
{\d\phi_N\over\d\xi}(\xi,\eta)={1\over N}\theta_2'\left({\xi\over N}\right)\theta_2\left({\eta\over N}\right)\qquad\hbox{,}\qquad{\d\phi_N\over\d\eta}(\xi,\eta)={1\over N}\theta_2\left({\xi\over N}\right)\theta_2'\left({\eta\over N}\right),
$$
$$
L_{-m}\phi_N(\xi,\eta)={1\over N^2}\theta_2''\left({\xi\over N}\right)\theta_2\left({\eta\over N}\right)+{1\over N^2}\theta_2\left({\xi\over N}\right)
\theta_2''\left({\eta\over N}\right)-{m\over N\xi}\theta_2'\left({\xi\over N}\right)\theta_2\left({\eta\over N}\right)
$$
which give us
$$
\sup_{D_6}\left|{\d\phi_N\over\d\xi}\right|=\O\left({1\over N}\right),\qquad \sup_{D_6}\left|{\d\phi_N\over\d\eta}\right|=\O\left({1\over N}\right),\qquad\sup_{D_6}|L_{-m}(\phi_N)|=\O\left({1\over N^2}\right).
$$

* So does same with $D_7$. Hence the lemma resulting.

\end{proof}

We now estimate the following quantities for $i\in\{1,\,\ldots,\,8\}$ :
$$
\int_{D_i} |F_m| \,d\xi d\eta, \quad \int_{D_i} |\d_\xi F_m| \,d\xi d\eta\quad {\rm et}\, \int_{D_i} |\d_\eta F_m| \,d\xi d\eta.
$$
\begin{lem}\label{estimationMleq1}
For Re $ m<1$, we have : 

- for $i=1 :$
$$
\int_{D_i}|F_m|d\xi\,d\eta=\O\left({1\over N^2}\right),\qquad\int_{D_i}\left|{\d F_m\over \d \xi}\right|d\xi d\eta=\O\left({1\over N}\right).
$$
- for $i=2, 4:$
$$
\int_{D_i}|F_m|d\xi\,d\eta=\O\left({N^2}\right),\qquad\int_{D_i}\left|{\d F_m\over \d \eta}\right|d\xi d\eta=\O\left({N}\right).
$$
- for $i=3:$
$$
\int_{D_i}|F_m|d\xi\,d\eta=\O\left({N^2}\right),\qquad\int_{D_i}\left|{\d F_m\over \d \xi}\right|d\xi d\eta=\O\left({N}\right).
$$
- for $i=5,8:$
$$
\int_{D_i}|F_m|d\xi\,d\eta=\O\left({1\over N^2}\right),\qquad\int_{D_i}\left|{\d F_m\over \d \xi}\right|d\xi d\eta=\O\left({1\over N}\right),$$
$$
\int_{D_i}\left|{\d F_m\over \d \eta}\right|d\xi d\eta=\O\left({1\over N^2}\right).
$$
- for $i=6,7:$
$$
\int_{D_i}|F_m|d\xi\,d\eta=\O\left({N^2}\right),\qquad\int_{D_i}\left|{\d F_m\over \d \xi}\right|d\xi d\eta=\O\left({N}\right),$$
$$
\int_{D_i}\left|{\d F_m\over \d \eta}\right|d\xi d\eta=\O\left({N}\right).
$$

\end{lem}

\begin{proof}

For Re $m<1$, we have
$$
F_m(\xi,\eta)=-{\xi x^{1-m}\over 2\pi}\int_{\theta=0}^{\pi}{\sin^{1-m}\theta \, d\theta \over \left[(x-\xi)^2+4x\xi\sin^2\left({\theta\over 2}\right)+(y-\eta)^2\right]^{1-{m\over2}}}.
$$
therefore there is a constant $C_1$ such that for all $(\xi,\eta)\in\H^+$, we have
\begin{equation}\label{Fm}
|F_m(\xi,\eta)|\leq{C_1\xi\over\left[(x-\xi)^2+(\eta-y)^2\right]^{1-{{\rm Re\, m}\over 2}}}.
\end{equation}
Similarly, we have
$$
\frac{\d F_m}{\d\xi}=\frac{F_m}\xi-{\xi x^{1-m}\over 2\pi}(m-2)\int_{\theta=0}^{\pi}{[(\xi-x)+2x\sin^2\frac\theta2]\sin^{1-m}\theta \, d\theta \over \left[(x-\xi)^2+4x\xi\sin^2\left({\theta\over 2}\right)+(y-\eta)^2\right]^{2-{m\over2}}},$$
and as before, as
$$
\forall\theta\in[0,\pi],\quad\left|{[(\xi-x)+2x\sin^2\frac\theta2]\sin^{1-m}\theta \,  \over \left[(x-\xi)^2+4x\xi\sin^2\left({\theta\over 2}\right)+(y-\eta)^2\right]^{2-{m\over2}}}\right|\leq{|(\xi-x)+2x\sin^2{\theta\over2}|\over\left(\left(x-\xi\right)^2+(\eta-y)^2\right)^{2-{{\rm Re\, m}\over2}}}
$$ 
$$
={|\xi-x\cos\theta|\over\left(\left(x-\xi\right)^2+(\eta-y)^2\right)^{2-{{\rm Re\, m}\over2}}}\leq{\xi+x\over\left(\left(x-\xi\right)^2+(\eta-y)^2\right)^{2-{{\rm Re\, m}\over2}}},
$$
there exists a constant $C_2$ such that for all $N$ large enough and for all $(\xi,\eta)\in\H^+$, we have
\begin{equation}\label{Fmxi}
\left|\frac{\d F_m}{\d\xi}\right|\leq C_2\left[{1\over\left[\left(x-\xi\right)^2+(\eta-y)^2\right]^{1-{{\rm Re\, m}\over2}}} +{\xi(x+\xi)\over\left(\left(x-\xi\right)^2+(\eta-y)^2\right)^{2-{{\rm Re\, m}\over2}}}\right].
\end{equation}
Finally, as
$$
{\d F_m\over\d\eta}=(2-m)(\eta-y){\xi x^{1-m}\over2\pi}\int_{\theta=0}^{\pi}{\sin^{1-m}\theta\over \left[(x-\xi)^2+4x\xi\sin^2\left({\theta\over 2}\right)+(y-\eta)^2\right]^{2-{m\over2}}},
$$
there exists a constant $C_3$ such that for all $N$ large enough and for all $(\xi,\eta)\in\H^+$, we have
\begin{equation}\label{Fmeta}
\left|{\d F_m\over\d\eta}\right|\leq{C_3\xi\over|\eta-y|^{3-{\rm Re\,}m}}.
\end{equation}

Using these inequalities, we estimate integrals of these functions on the domains $D_i$.

\bigskip$\underline{\hbox{On }D_1}$ :
Inequality (\ref{Fm}) give us
$$
\int_{D_1}|F_m|d\xi d\eta= \O(1)\int_{\xi={1\over 2N}}^{\xi={1\over N}}\int_{\eta=-{N\over2}}^{\eta={N\over2}}{\xi\,d\xi\,d\eta\over\left[(x-\xi)^2+(\eta-y)^2\right]^{1-{{\rm Re\, m}\over 2}}}
$$
$$
=\O(1/N^2)\int_{\eta=-{N\over2}}^{\eta={N\over2}}{d\eta\over\left[(x-{1\over N})^2+(\eta-y)^2\right]^{1-{{\rm Re\, m}\over 2}}}=\O(1/N^2).
$$
Then, thanks to (\ref{Fmxi}), we have
$$
\int_{D_1}\left|{\d F_m\over \d \xi}\right|d\xi d\eta=\O(1)\int_{\xi={1\over 2N}}^{\xi={1\over N}}\int_{\eta=-{N\over2}}^{\eta={N\over2}}\left[{1\over\left[\left(x-\xi\right)^2+(\eta-y)^2\right]^{1-{\Re m\over2}}} \right.\qquad\qquad\qquad\qquad
$$
$$
\qquad\qquad\qquad\qquad\qquad\qquad\qquad\qquad \left. +{\xi(x+\xi)\over\left(\left(x-\xi\right)^2+(\eta-y)^2\right)^{2-{\Re m\over2}}}\right]d\xi d\eta
$$
$$
=\O(1/N)\int_{\eta=-{N\over2}}^{\eta={N\over2}}{d\eta\over\left[(x-{1\over N})^2+(\eta-y)^2\right]^{1-{{\rm Re\, m}\over 2}}}+\O(1/N^2)=\O(1/N).
$$

\bigskip$\underline{\hbox{On }D_2}$ : due to inequality (\ref{Fm}), we have
$$
\int_{D_2}|F_m|d\xi d\eta=\O(1)\int_{\xi={1\over N}}^{{N\over 2}}\int_{\eta={N\over2}}^N {\xi \,d\xi d\eta\over\left[(x-\xi)^2+(\eta-y)^2\right]^{1-{{\rm Re\, m}\over 2}}}
$$
$$
=\O(1)\int_{\xi={1\over N}}^{{N\over 2}}\int_{\eta={N\over2}}^N {\xi\over\left|\eta-y\right|^{2-{\rm Re\, m}}} \,d\xi d\eta=\O(N^2)\int_{\eta={N\over2}}^N {d\eta\over\left|\eta-y\right|^{2-{\rm Re\, m}}}
$$
$$
=\O(N^2)\left[{1\over(N-y)^{1-{\rm Re\,}m}}-{1\over({N\over2}-y)^{1-{\rm Re\,}m}}\right]=\O(N^{{Re\,}m+1}).
$$

Then, thanks to (\ref{Fmeta}), we have
$$
\int_{D_2}\left|{\d F_m\over \d \eta}\right|d\xi d\eta=\O(1)\int_{\xi={1\over N}}^{{N\over 2}}\int_{\eta={N\over2}}^N{\xi\over|\eta-y|^{3-{\rm Re\,}m}}d\xi\,d\eta
$$
$$
=\O(N^2)\int_{\eta={N\over2}}^N{d\eta\over|\eta-y|^{3-{\rm Re\,}m}}=\O(N^{{\rm Re\,}m})
$$

\bigskip$\underline{\hbox{On }D_3}$ : due to inequality (\ref{Fm}), we have
$$
\int_{D_3}|F_m|d\xi d\eta=\O(1)\int_{\xi={N\over2}}^{{N}}\int_{\eta=-{N\over2}}^{N\over2} {\xi \,d\xi d\eta\over\left[(x-\xi)^2+(\eta-y)^2\right]^{1-{{\rm Re\, m}\over 2}}}
$$
$$
=\O(1)\int_{\xi={N\over2}}^{{N}}\int_{\eta=-{N\over2}}^{N\over2} {\xi \,d\xi d\eta\over\left[(x-{N\over2})^2+(\eta-y)^2\right]^{1-{{\rm Re\, m}\over 2}}}$$
$$
=\O(N^2)\int_{\eta=-{N\over2}}^{N\over2} {\,d\eta\over\left[(x-{N\over2})^2+(\eta-y)^2\right]^{1-{{\rm Re\, m}\over 2}}}
$$
$$
=\O(N^2)\int_{\eta=-{N\over2}}^{N\over2} {\,d\eta\over\left[1+(\eta-y)^2\right]^{1-{{\rm Re\, m}\over 2}}}=\O(N^2).
$$
Then, thanks to (\ref{Fmxi}), we have
$$
\int_{D_3}\left|{\d F_m\over \d \xi}\right|d\xi d\eta=\O(1)\int_{\xi={N\over2}}^{{N}}\int_{\eta=-{N\over2}}^{N\over2} \left[{1\over\left[\left(x-\xi\right)^2+(\eta-y)^2\right]^{1-{\Re m\over2}}}\right.\qquad\qquad\qquad\qquad
$$
$$
\qquad\qquad\qquad\qquad\qquad\qquad\qquad\qquad \left.+{\xi(x+\xi)\over\left(\left(x-\xi\right)^2+(\eta-y)^2\right)^{2-{\Re m\over2}}}\right]d\xi d\eta
$$
$$
=\O(N)\int_{\eta=-{N\over2}}^{N\over2} {\,d\eta\over\left[(x-{N\over2})^2+(\eta-y)^2\right]^{1-{{\rm Re\, m}\over 2}}}+\O(N^3)\int_{\eta=-{N\over2}}^{N\over2} {\,d\eta\over\left[(x-{N\over2})^2+(\eta-y)^2\right]^{2-{{\rm Re\, m}\over 2}}}
$$
$$
=\O(N)+\O(N^3)\int_{\eta=-{N\over2}}^{N\over2} {\,d\eta\over (x-{N\over2})^{4-{\rm Re\, m}}}
$$
$$
=\O(N)+\O(N^{{\rm Re\,}m})=\O(N).
$$

\bigskip$\underline{\hbox{On }D_4}$ : this case is analogous to the case $D_2$.

\bigskip$\underline{\hbox{Sur }D_5}$ : due to inequality (\ref{Fm}), we have
$$
\int_{D_5}|F_m|d\xi d\eta=\O(1)\int_{\xi={1\over 2N}}^{{1\over N}}\int_{\eta={N\over2}}^N {\xi \,d\xi d\eta\over\left[(x-\xi)^2+(\eta-y)^2\right]^{1-{{\rm Re\, m}\over 2}}}
$$
$$
=\O(1/N^2)\int_{\eta={N\over2}}^N { d\eta\over (\eta-y)^{2-{\rm Re\, m}}}
$$
$$
=\O(1/N^2)\left[{1\over(N-y)^{1-{\rm Re\,}m}}-{1\over({N\over2}-y)^{1-{\rm Re\,}m}}\right]=\O(1/N^{3-{\rm Re\,}m}).
$$
Then, thanks to (\ref{Fmxi}), we have
$$
\int_{D_5}\left|{\d F_m\over \d \xi}\right|d\xi d\eta=\O(1)\int_{\xi={1\over 2N}}^{\xi={1\over N}}\int_{\eta={N\over2}}^{\eta={N}}\left[{1\over\left[\left(x-\xi\right)^2+(\eta-y)^2\right]^{1-{\Re m\over2}}} \right.\qquad\qquad\qquad\qquad
$$
$$
\qquad\qquad\qquad\qquad\qquad\qquad\qquad\qquad \left. +{\xi(x+\xi)\over\left(\left(x-\xi\right)^2+(\eta-y)^2\right)^{2-{\Re m\over2}}}\right]d\xi d\eta
$$
$$
=\O(1)\int_{\xi={1\over 2N}}^{\xi={1\over N}}\int_{\eta={N\over2}}^{\eta={N}}\left[{1\over\left[\left(x-{1\over N}\right)^2+(\eta-y)^2\right]^{1-{\Re m\over2}}} \right.\qquad\qquad\qquad\qquad
$$
$$
\qquad\qquad\qquad\qquad\qquad\qquad\qquad\qquad \left. +{\xi(x+\xi)\over\left(\left(x-{1\over N}\right)^2+(\eta-y)^2\right)^{2-{\Re m\over2}}}\right]d\xi d\eta
$$
$$
=\O\left({1\over N}\right)
$$
The estimate (\ref{Fmeta}) gives
$$
\int_{D_5}\left|{\d F_m\over \d \eta}\right|d\xi d\eta=\O(1)\int_{\xi={1\over 2N}}^{\xi={1\over N}}\int_{\eta={N\over2}}^{\eta={N}}
{\xi\,d\xi\,d\eta \over|\eta-y|^{3-{\rm Re\,}m}}=\O\left({1\over N^2}\right)
$$

\bigskip$\underline{\hbox{On }D_6}$ : due to (\ref{Fm}), we have
$$
\int_{D_6}|F_m|d\xi d\eta=\O(1)\int_{\xi={N\over2}}^{{N}}\int_{\eta={N\over2}}^N {\xi \,d\xi d\eta\over\left[(x-\xi)^2+(\eta-y)^2\right]^{1-{{\rm Re\, m}\over 2}}}
$$
$$
=\O(N^2)\int_{\eta={N\over2}}^N { d\eta\over (\eta-y)^{2-{\rm Re\, m}}}
$$
$$
=\O(N^2)\left[{1\over(N-y)^{1-{\rm Re\,}m}}-{1\over({N\over2}-y)^{1-{\rm Re\,}m}}\right]=\O(N^{1+{\rm Re\,}m}).
$$
Then, thanks to (\ref{Fmxi}), we have
$$
\int_{D_6}\left|{\d F_m\over \d \xi}\right|d\xi d\eta=\O(1)\int_{\xi={N\over 2}}^{\xi={N}}\int_{\eta={N\over2}}^{\eta={N}}\left[{1\over\left[\left(x-\xi\right)^2+(\eta-y)^2\right]^{1-{\Re m\over2}}} \right.\qquad\qquad\qquad\qquad
$$
$$
\qquad\qquad\qquad\qquad\qquad\qquad\qquad\qquad \left. +{\xi(x+\xi)\over\left(\left(x-\xi\right)^2+(\eta-y)^2\right)^{2-{\Re m\over2}}}\right]d\xi d\eta
$$
$$
=\O(1)\int_{\xi={N\over 2}}^{\xi={N}}\int_{\eta={N\over2}}^{\eta={N}}\left[{1\over(\eta-y)^{2-{\Re m}}} +{\xi(x+\xi)\over(\eta-y)^{4-{\Re m}}}\right]d\xi d\eta
$$
$$
=\O(N)+\O(N^3)\int_{\eta={N\over2}}^N{d\eta\over(\eta-y)^{4-{\rm Re}\,m}}=\O(N)+\O(N^{\Re m})=\O(N).
$$
The estimate (\ref{Fmeta}) gives
$$
\int_{D_6}\left|{\d F_m\over \d \eta}\right|d\xi d\eta=\O(1)\int_{\xi={N\over 2}}^{\xi={N}}\int_{\eta={N\over2}}^{\eta={N}}
{\xi\,d\xi\,d\eta \over|\eta-y|^{3-{\rm Re\,}m}}=O(N^2)\int_{\eta={N\over2}}^{\eta={N}}
{d\eta \over|\eta-y|^{3-{\rm Re\,}m}}
$$
$$
=\O(N^{{\rm Re\,}m}).
$$

\bigskip$\underline{\hbox{Sur }D_7}$ : this case is analogous to the case $D_6$.

\bigskip$\underline{\hbox{Sur }D_8}$ : this case is analogous to the case $D_5$.
\end{proof}

\begin{lem}
For Re $m\geq1$, all estimations obtained on the Lemma \ref{estimationMleq1} are true.
\end{lem}
\begin{proof}
For Re $m\geq 1$, we have 
$$
F_m(x,y,\xi,\eta)=-{\xi^m\over 2\pi}\int_{\theta=0}^\pi \sin^{m-1}\theta \left({1\over[(x-\xi)^2+4x\xi\sin^2{\theta\over2}+(y-\eta)^2]^{m/2}}\right.\qquad\qquad\qquad\qquad
$$
$$
\qquad\qquad\qquad\qquad\qquad\qquad\qquad\qquad\qquad \left. -{1\over[(x+\xi)^2-4x\xi\sin^2{\theta\over2}+(y-\eta)^2]^{m/2}}\right)d\theta.
$$
Since for all $(\xi,\eta)\in\H^+$, we have
$$
\left|\left[(x+\xi)^2-4x\xi\sin^2{\theta\over2}+(y-\eta)^2\right]^{m/2}\right|=\left|\left[x^2+\xi^2+2x\xi\cos\theta+(y-\eta)^2\right]^{m/2}\right|,
$$
then for all $(\xi,\eta)\in\H^+$,
\begin{equation}\label{mino}
\left|\left[(x+\xi)^2-4x\xi\sin^2{\theta\over2}+(y-\eta)^2\right]^{m/2}\right|\geq\left((x-\xi)^2+(y-\eta)^2\right)^{{{\rm Re\,}m\over 2}}
\end{equation}
and there is a constant $C'_1$ such that for all $(\xi,\eta)\in\H^+$, we have
\begin{equation}\label{fm}
|F_m|\leq  {C'_1\xi^{{\rm Re\,}m}\over \left((x-\xi)^2+(y-\eta)^2\right)^{{{\rm Re\,}m\over 2}}}.
\end{equation}
%
%
This inequality does not suffice to estimate integrals supported on  $D_1$.  We can improve inequality (\ref{fm}) as follows :

We rewrite $F_m$ as
$$
F_m(x,y,\xi,\eta)=-{\xi^m\over 2\pi}\int_{\theta=0}^\pi \sin^{m-1}\theta K_m(x,y,\xi,\eta,\theta)d\theta
$$
where
$$
K_m(x,y,\xi,\eta,\theta)={1\over[(x-\xi)^2+4x\xi\sin^2{\theta\over2}+(y-\eta)^2]^{m/2}}\qquad\qquad\qquad\qquad\qquad\qquad
$$
$$
\qquad\qquad\qquad\qquad\qquad\qquad\qquad\qquad\qquad\quad-{1\over[(x+\xi)^2-4x\xi\sin^2{\theta\over2}+(y-\eta)^2]^{m/2}}.
$$

For $(x,y)\in\H^+$ fixed, $\theta\in[0,\pi]$ fixed and $\eta\in\R$ fixed, we define the function $g_m$ on $[-1/N,1/N]$ with $1/N<x$ by
$$
g_m(\xi)={1\over[(x-\xi)^2+4x\xi\sin^2{\theta\over2}+(y-\eta)^2]^{m/2}}.
$$
This function is well defined because 
$$
(x-\xi)^2+4x\xi\sin^2{\theta\over2}+(y-\eta)^2=x^2+\xi^2-2x\xi\cos\theta+(y-\eta)^2\geq(x-|\xi|)^2+(y-\eta)^2
$$
and this last term is larger than $(x-1/N)^2>0$.

We have
$$
K_m(x,y,\xi,\eta,\theta)=g_m(\xi)-g_m(-\xi)
$$
thus
$$
|K_m(x,y,\xi,\eta,\theta)|\leq 2\xi\sup_{[-\xi,\xi]}|g_m'|\leq2|m|\xi{|\xi-x|+2x\over[(x-\xi)^2+(y-\eta)^2]^{1+{1\over 2}\Re m}},
$$
which implies that there exists a constant $c'_1$ such that
\begin{equation}\label{fmbis}
\forall(\xi,\eta)\in D_1,\qquad |F_m|\leq c'_1{\xi^{\Re m+1}\over\left[\left(x-\xi \right)^2+(\eta-y)^2\right]^{1+{1\over2}\Re m}}.
\end{equation}

Similarly, we have
$$
{\d F_m\over \d\xi}={m\, F_m\over\xi}+{m\xi^m\over 2\pi}\int_{\theta=0}^\pi \sin^{m-1}\theta \left({(\xi-x)+2x\sin^2{\theta\over2}\over[(x-\xi)^2+4x\xi\sin^2{\theta\over2}+(y-\eta)^2]^{{m\over2}+1}}\right.\qquad\qquad\qquad\qquad
$$
\begin{equation}\label{dFmxi}
\qquad\qquad\qquad\qquad\qquad\qquad\qquad\qquad\qquad \left. -{(\xi+x)-2x\sin^2{\theta\over2}\over[(x+\xi)^2-4x\xi\sin^2{\theta\over2}+(y-\eta)^2]^{{m\over2}+1}}\right)d\theta.
\end{equation}
and as before,
$$
\forall \theta\in[0,\pi],\quad \left|{[(\xi-x)+2x\sin^2{\theta\over2}]\sin^{m-1}\theta\over[(x-\xi)^2+4x\xi\sin^2{\theta\over2}+(y-\eta)^2]^{{m\over2}+1}}\right|\leq { \left|(\xi-x)+2x\sin^2{\theta\over2}\right|\over[(x-\xi)^2+(y-\eta)^2]^{{{\rm Re\,} m\over2}+1}}
$$
$$
={|\xi-x\cos\theta|\over [(x-\xi)^2+(y-\eta)^2]^{{{\rm Re\,} m\over2}+1} }\leq{\xi+x\over [(x-\xi)^2+(y-\eta)^2]^{{{\rm Re\,} m\over2}+1} }
$$
and thanks to (\ref{mino}) :
 $$
\forall \theta\in[0,\pi],\quad \left|{[(\xi+x)-2x\sin^2{\theta\over2}]\sin^{m-1}\theta\over[(x+\xi)^2-4x\xi\sin^2{\theta\over2}+(y-\eta)^2]^{{m\over2}+1}}\right|\leq { \left|(\xi+x)-2x\sin^2{\theta\over2}\right|\over[(x-\xi)^2+(y-\eta)^2]^{{{\rm Re\,} m\over2}+1}}
$$
$$
={|\xi+x\cos\theta|\over [(x-\xi)^2+(y-\eta)^2]^{{{\rm Re\,} m\over2}+1} }\leq{\xi+x\over [(x-\xi)^2+(y-\eta)^2]^{{{\rm Re\,} m\over2}+1} }.
$$
Those estimations, the formula (\ref{dFmxi}) and the inequality (\ref{fm}) show that there is a constant $C'_2$ such that large enough $N$ and for all $(\xi,\eta)\in\H^+$, we have
\begin{equation}\label{fmxi}
\left|{\d F_m\over \d\xi}\right|\leq C'_2\left({\xi^{{\rm Re\,}m-1}\over [(x-\xi)^2+(y-\eta)^2]^{{{\rm Re\,} m\over2}} }+{\xi^{{\rm Re\,}m}(\xi+x)\over [(x-\xi)^2+(y-\eta)^2]^{{{\rm Re\,} m\over2}+1} }\right).
\end{equation}

We can improve this inequality on $D_1$, for this, we need to use the inequality (\ref{fmbis}) instead of (\ref{fm}) and we obtain that there is two constants $C''_2$ and $C'''_2$ (which do not depend of $N$)
such that for all $(\xi,\eta)\in D_1$
$$
\left|{\d F_m\over \d\xi}\right|\leq C''_2\left({\xi^{{\rm Re\,}m}\over [(x-\xi)^2+(y-\eta)^2]^{1+{{\rm Re\,} m\over2}} }+{\xi^{{\rm Re\,}m}(\xi+x)\over [(x-\xi)^2+(y-\eta)^2]^{{{\rm Re\,} m\over2}+1} }\right)
$$
\begin{equation}\label{fmxibis}
\leq C_2'''{\xi^{{\rm Re\,}m}\over [(x-\xi)^2+(y-\eta)^2]^{1+{{\rm Re\,} m\over2}} }
\end{equation}

%
%
%
%

Finally,
$$
{\d F_m\over \d\eta}={m(\eta-y)\xi^m\over 2\pi}\int_{\theta=0}^\pi \sin^{m-1}\theta \left({1\over[(x-\xi)^2+4x\xi\sin^2{\theta\over2}+(y-\eta)^2]^{{m\over 2}+1}}\right.\qquad\qquad\qquad\qquad
$$
$$
\qquad\qquad\qquad\qquad\qquad\qquad\qquad\qquad\qquad \left. -{1\over[(x+\xi)^2-4x\xi\sin^2{\theta\over2}+(y-\eta)^2]^{{m\over 2}+1}}\right)d\theta.
$$
Similarly, there is a constant $C'_3$ such that for all large enough $N$ and for all $(\xi,\eta)\in\H^+$, we have
\begin{equation}\label{fmeta}
\left|{\d F_m\over \d\eta}\right|\leq C'_3 {|\eta-y|\xi^{{\rm Re\,}m}\over \left((x-\xi)^2+(y-\eta)^2\right)^{{{\rm Re\,}m\over 2}+1}}.
\end{equation}
Thanks to those inequalities, we will estimate the integrals of those functions on each domain $D_i$.

\bigskip$\underline{\hbox{On }D_1}$ :
due to (\ref{fmbis}), we have
$$
\int_{D_1}|F_m|d\xi d\eta= \O(1)\int_{\xi={1\over 2N}}^{\xi={1\over N}}\int_{\eta=-{N\over2}}^{\eta={N\over2}} {\xi^{\Re m+1}d\xi d\eta\over\left[\left(x-\xi \right)^2+(\eta-y)^2\right]^{1+{1\over2}\Re m}}
$$
$$
=\O(N)\int_{\xi={1\over 2N}}^{\xi={1\over N}}\xi^{{\rm Re\,}m+1}d\xi=\O(N)\left[\left({1\over N}\right)^{{\rm Re\,}m+2}-\left({1\over 2N}\right)^{{\rm Re\,}m+2}\right]
$$
$$
=\O(1/N^{{\rm Re\,}m+1}).
$$
Then thanks to (\ref{fmxibis}), we have
$$
\int_{D_1}\left|{\d F_m\over \d\xi}\right|d\xi d\eta= \O(1)\int_{\xi={1\over 2N}}^{\xi={1\over N}}\int_{\eta=-{N\over2}}^{\eta={N\over2}}{\xi^{\Re m} d\xi d\eta\over\left[\left(x-\xi\right)^2+(\eta-y)^2\right]^{1+{1\over2}\Re m}}.
$$
$$
=\O(N)\int_{\xi={1\over 2N}}^{\xi={1\over N}}\xi^{{\rm Re\,}m}d\xi=\O(1/N^{{\rm Re\,}m}).
$$

\bigskip$\underline{\hbox{On }D_2}$ : due to (\ref{fm}), we have
$$
\int_{D_2}|F_m|d\xi d\eta=\O(1)\int_{\xi={1\over N}}^{{N\over 2}}\int_{\eta={N\over2}}^N  {\xi^{{\rm Re\,}m}d\xi d\eta\over \left((x-\xi)^2+(y-\eta)^2\right)^{{{\rm Re\,}m\over 2}}}
$$
$$
=\O(1)\int_{\xi={1\over N}}^{{N\over 2}}\int_{\eta={N\over2}}^N  {\xi^{{\rm Re\,}m}d\xi d\eta\over |y-{N\over2}|^{{\rm Re\,}m}}=\O(N^2),
$$
because we integrate a bounded function (independently of $N$) on a domain with measure controlled by $\O(N^2)$.

Then, the inequality (\ref{fmeta}) implies
$$
\int_{D_2}\left|{\d F_m\over \d\eta}\right|d\xi d\eta= \O(1)\int_{\xi={1\over N}}^{{N\over 2}}\int_{\eta={N\over2}}^N  {|\eta-y|\xi^{{\rm Re\,}m} d\xi d\eta\over \left((x-\xi)^2+(y-\eta)^2\right)^{{{\rm Re\,}m\over 2}+1}}
$$
$$
=\O(1)\int_{\xi={1\over N}}^{{N\over 2}}\int_{\eta={N\over2}}^N  {\xi^{{\rm Re\,}m} d\xi d\eta\over |y-\eta|^{{\rm Re\,}m+1}}=
\O(1)\int_{\xi={1\over N}}^{{N\over 2}}\int_{\eta={N\over2}}^N  {N^{{\rm Re\,}m} d\xi d\eta\over |{N\over 2}-y|^{{\rm Re\,}m+1}}=\O(N).
$$

\bigskip$\underline{\hbox{On }D_3}$ : due to (\ref{fm}), we have
$$
\int_{D_3}|F_m|d\xi d\eta=\O(1)\int_{\xi={N\over2}}^{{N}}\int_{\eta=-{N\over2}}^{N\over2}  {\xi^{{\rm Re\,}m} d\xi d\eta\over \left((x-\xi)^2+(y-\eta)^2\right)^{{{\rm Re\,}m\over 2}}}
$$
$$
=\O(1)\int_{\xi={N\over2}}^{{N}}\int_{\eta=-{N\over2}}^{N\over2}  {\xi^{{\rm Re\,}m} d\xi d\eta\over \left((x-{N\over 2})^2+(y-\eta)^2\right)^{{{\rm Re\,}m\over 2}}}
$$
$$
=\O(N^{{\rm Re\,}m+1})\int_{\eta=-{N\over2}}^{N\over2} { d\eta\over \left((x-{N\over 2})^2+(y-\eta)^2\right)^{{{\rm Re\,}m\over 2}}}=\O(N^2).
$$
Then, thanks to (\ref{fmxi}), we have
$$
\int_{D_3}\left|{\d F_m\over\d\xi}\right|d\xi d\eta=\O(1)\int_{\xi={N\over2}}^{{N}}\int_{\eta=-{N\over2}}^{N\over2}  \left({\xi^{{\rm Re\,}m-1}\over [(x-\xi)^2+(y-\eta)^2]^{{{\rm Re\,} m\over2}} }\right. \qquad\qquad\qquad\qquad\qquad
$$
$$
\qquad\qquad\qquad\qquad\qquad\qquad\qquad\qquad \left.+{\xi^{{\rm Re\,}m}(\xi+x)\over [(x-\xi)^2+(y-\eta)^2]^{{{\rm Re\,} m\over2}+1} }\right)d\xi d\eta
$$
$$
=\O(N^{{\rm Re\,}m})\int_{\eta=-{N\over2}}^{N\over2}{d\eta\over [(x-{N\over2})^2+(y-\eta)^2]^{{{\rm Re\,} m\over2}} } \qquad\quad
$$
$$
\qquad\qquad\qquad\qquad\qquad\qquad\qquad+\O(N^{{\rm Re\,}m+2})\int_{\eta=-{N\over2}}^{N\over2}{d\eta\over [(x-{N\over2})^2+(y-\eta)^2]^{{{\rm Re\,} m\over2}+1} }
$$
$$
=\O(N)+\O(N)=\O(N).
$$

\bigskip$\underline{\hbox{On }D_4}$ : this case is analogous to the case $D_2$.

\bigskip$\underline{\hbox{On }D_5}$ : due to (\ref{fm}), we have
$$
\int_{D_5}|F_m|d\xi d\eta=\O(1)\int_{\xi={1\over 2N}}^{{1\over N}}\int_{\eta={N\over2}}^N  {\xi^{{\rm Re\,}m}d\xi d\eta\over \left((x-\xi)^2+(y-\eta)^2\right)^{{{\rm Re\,}m\over 2}}}
$$
$$
=\O(1/N^{{\rm Re\,}m+1})\int_{\eta={N\over2}}^N  { d\eta\over |y-{N\over2}|^{{\rm Re\,}m}}=\O(1/N^{2{\rm Re\,}m}).
$$
Then, thanks to (\ref{fmxi}), we have
$$
\int_{D_5}\left|{\d F_m\over \d \xi}\right|d\xi d\eta=\O(1)\int_{\xi={1\over 2N}}^{\xi={1\over N}}\int_{\eta={N\over2}}^{\eta={N}} \left({\xi^{{\rm Re\,}m-1}\over [(x-\xi)^2+(y-\eta)^2]^{{{\rm Re\,} m\over2}} }\right. \qquad \qquad \qquad  \qquad
$$
$$
 \qquad \qquad \qquad \qquad \qquad \qquad \qquad \qquad \qquad \left.+{\xi^{{\rm Re\,}m}(\xi+x)\over [(x-\xi)^2+(y-\eta)^2]^{{{\rm Re\,} m\over2}+1} }\right)d\xi d\eta
$$
$$
=\O(1)\int_{\xi={1\over 2N}}^{\xi={1\over N}}\int_{\eta={N\over2}}^{\eta={N}}\left({\xi^{{\rm Re\,}m-1}\over|y-\eta|^{{\rm Re\,} m} }+{\xi^{{\rm Re\,}m}(\xi+x)\over |y-\eta|^{{\rm Re\,} m+2} }\right)d\xi d\eta
$$
$$
=\O(1/N^{2{\rm Re\,}m-1}).
$$
With the inequality (\ref{fmeta}), we have
$$
\int_{D_5}\left|{\d F_m\over \d \eta}\right|d\xi d\eta=\O(1)\int_{\xi={1\over 2N}}^{\xi={1\over N}}\int_{\eta={N\over2}}^{\eta={N}} {|\eta-y|\xi^{{\rm Re\,}m} d\xi d\eta\over \left((x-\xi)^2+(y-\eta)^2\right)^{{{\rm Re\,}m\over 2}+1}}
$$
$$
=\O(1)\int_{\xi={1\over 2N}}^{\xi={1\over N}}\int_{\eta={N\over2}}^{\eta={N}} {\xi^{{\rm Re\,}m} d\xi d\eta\over |y-\eta|^{{\rm Re\,}m+1}}=\O(1/N^{2{\rm Re\,}m+1})
$$

\bigskip$\underline{\hbox{On }D_6}$ : due to (\ref{fm}), we have
$$
\int_{D_6}|F_m|d\xi d\eta=\O(1)\int_{\xi={N\over2}}^{{N}}\int_{\eta={N\over2}}^N {\xi^{{\rm Re\,}m}d\xi d\eta\over \left((x-\xi)^2+(y-\eta)^2\right)^{{{\rm Re\,}m\over 2}}}
$$
$$
=\O(N^{{\rm Re\,}m+1})\int_{\eta={N\over2}}^N {d\eta\over ({N\over2}-y)^{{\rm Re\,}m}}=\O(N^2).
$$
Then, thanks to (\ref{fmxi}), we obtain
$$
\int_{D_6}\left|{\d F_m\over \d \xi}\right|d\xi d\eta=\O(1)\int_{\xi={N\over 2}}^{\xi={N}}\int_{\eta={N\over2}}^{\eta={N}} \left({\xi^{{\rm Re\,}m-1}\over [(x-\xi)^2+(y-\eta)^2]^{{{\rm Re\,} m\over2}} }\right. \qquad\qquad \qquad
$$
$$
\qquad\qquad\qquad\qquad\qquad\qquad   \left.+{\xi^{{\rm Re\,}m}(\xi+x)\over [(x-\xi)^2+(y-\eta)^2]^{{{\rm Re\,} m\over2}+1} }\right)d\xi d\eta
$$
$$
=\O(1)\int_{\xi={N\over 2}}^{\xi={N}}\int_{\eta={N\over2}}^{\eta={N}} \left({\xi^{{\rm Re\,}m-1}\over |y-\eta|^{{\rm Re\,} m} }+{\xi^{{\rm Re\,}m}(\xi+x)\over|y-\eta|^{{\rm Re\,} m+2} }\right)d\xi d\eta
$$
$$
=\O(N)+\O(N^{{\rm Re\,}m+2})\int_{\eta={N\over2}}^{\eta={N}} {d\eta\over |y-\eta|^{{\rm Re\,} m+2} }=\O(N).
$$
Finally, the inequality (\ref{fmeta}) implies
$$
\int_{D_6}\left|{\d F_m\over \d \eta}\right|d\xi d\eta=\O(1)\int_{\xi={N\over 2}}^{\xi={N}}\int_{\eta={N\over2}}^{\eta={N}} {|\eta-y|\xi^{{\rm Re\,}m}d\xi d\eta\over \left((x-\xi)^2+(y-\eta)^2\right)^{{{\rm Re\,}m\over 2}+1}}
$$
$$
=\O(N^{{\rm Re\,}m+1})\int_{\eta={N\over2}}^{\eta={N}} {d\eta\over |y-\eta|^{{\rm Re\,}m+1}}=\O(N).
$$

\bigskip$\underline{\hbox{On }D_7}$ : this case is analogous to the case $D_6$.

\bigskip$\underline{\hbox{On }D_8}$ : this case is analogous to the case $D_5$.

\end{proof}

In the following table, we summarize the results obtained on the previous lemmas :

\bigskip
\resizebox{12.5cm}{!}{
 \setlength{\extrarowheight}{5 pt}
\begin{tabular}{|l||*{5}{c|}}\hline
\makebox[1em]{$i$}
&\makebox[6em]{$\mathop{\sup}_{D_i}|L_{-m}\phi_N|$}&\makebox[6em]{$\int_{D_i}|F_m|d\xi d\eta$}&\makebox[5em]{$(|\d_{\xi}\phi_N|,|\d_{\eta}\phi_N|)$}
&\makebox[5em]{$\int_{D_i}|\d_{\xi}F_m|$}&\makebox[5em]{$\int_{D_i}|\d_{\eta}F_m|$}\\\hline\hline
1 & $\O(N^2)$     & $\O(1/N^2)$  &  $(\O(N),0)     $                                    & $\O({1\over N})$   & $\times$\\ \hline
2 & $\O(1/N^2)$  & $\O(N^2)$     &  $(0,\O({1\over N})) $                         &             $\times$        & $\O(N)$ \\ \hline
3 & $\O(1/N^2)$  & $\O(N^2)$     &  $(\O({1\over N}),0) $                         &            $\O(N)$          & $\times$\\ \hline
4 & $\O(1/N^2)$  & $\O(N^2)$     &  $(0,\O({1\over N})) $                         &            $\times$         & $\O(N)$ \\ \hline
5 & $\O(N^2)$     & $\O(1/N^2)$  &  $(\O(N),\O({1\over N}))$                   & $\O({1\over N})$   & $\O({1\over N^2})$\\ \hline
6 & $\O(1/N^2)$  & $\O(N^2)$     &  $(\O({1\over N}),\O({1\over N}))$    &         $\O(N)$             &  $\O(N)$ \\ \hline
7 & $\O(1/N^2)$  & $\O(N^2)$     &  $(\O({1\over N}),\O({1\over N}))$    &         $\O(N)$             &  $\O(N)$ \\ \hline
8 & $\O(N^2)$     & $\O(1/N^2)$  &  $(\O(N),\O({1\over N}))$                   & $\O({1\over N})$   & $\O({1\over N^2})$\\ \hline
\end{tabular}

 \setlength{\extrarowheight}{0 pt}}

\bigskip\noindent
We can easily check that for each $i\in\{1,\, \ldots,\, 8\}$, the quantities 
$$
\mathop{\sup}_{D_i}|L_{-m}\phi_N|\int_{D_i}|F_m|,\quad \mathop{\sup}_{D_i}|\d_{\xi}\phi_N|\int_{D_i}|\d_{\xi}F_m|\quad{\rm and}\,\, \mathop{\sup}_{D_i}|\d_{\eta}\phi_N|\int_{D_i}|\d_{\eta}F_m|
$$
stay bounded. Therefore, 
$$
u(x,y)=o(1)
$$
when $N\to+\infty$. Thus 
$$
u\equiv 0
$$
and this completes the proof of the Proposition \ref{Unicite}.
\end{proof}

\begin{lem}\label{Lemme1} 
Let $u\in\mathcal{D}(\H^+)$ and let $(x,y)\in\H^+$, we define
$$
U(x,y)=\int_{\H^+}u(\xi,\eta)F_m(x,y,\xi,\eta)d\xi\,d\eta,
$$
then  $\displaystyle\lim_{\|(x,y)\|\to+\infty} U=0$, and for all $y\in\R$, $\displaystyle\lim_{(0,y)} U=0$. 

Moreover, $U\in C^\infty(\H^+\setminus{ \rm supp}\,u)$ and for all $(x,y)\not\in${ \rm supp }$u$, we have $L_{m,x,y}U(x,y)=0$.
\end{lem}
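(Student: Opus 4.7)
The plan is to exploit that $u$ has compact support $K\subset\H^+$, so the defining integral for $U(x,y)$ is effectively over $K$, and then to treat three items separately: the two boundary limits, the $C^\infty$-regularity of $U$ away from $K$, and the equation $L_{m,x,y}U=0$ off the support.

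First I would treat the boundary behaviour. In the case $\Re m<1$, the explicit formula for $F_m$ gives, for $(\xi,\eta)\in K$, a uniform pointwise bound of the form
$$
|F_m(x,y,\xi,\eta)|\leq C_K\,x^{1-\Re m}\bigl[(x-\xi)^2+(y-\eta)^2\bigr]^{-1+\Re m/2}.
$$
For $x\to 0^+$ with $(x,y)$ near a fixed point $(0,y_0)$, the denominator remains bounded away from zero on $K$, so the factor $x^{1-\Re m}\to 0$ uniformly on $K$. For $\|(x,y)\|\to +\infty$, the denominator is of order $\|(x,y)\|^{2-\Re m}$ while $x^{1-\Re m}=O(\|(x,y)\|^{1-\Re m})$, giving $|F_m|=O(\|(x,y)\|^{-1})$, again uniformly on $K$. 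In the case $\Re m\geq 1$, the vanishing on the $y$-axis is now built into the symmetrisation: $F_m(0,y,\xi,\eta)\equiv 0$, and by continuity on compact sets $|F_m(x,y,\xi,\eta)|\to 0$ uniformly in $(\xi,\eta)\in K$ as $x\to 0^+$; at infinity, the bound $|E_m(\pm x,y,\xi,\eta)|=O(\|(x,y)\|^{-\Re m})$ for $(\xi,\eta)\in K$ and the triangle inequality settle the matter. In every case, the dominated convergence theorem yields $U(x,y)\to 0$ at $\partial\H^+$.

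Next I would handle smoothness and the equation. Fix $(x_0,y_0)\in\H^+\setminus K$ and a bounded open neighbourhood $V\subset\H^+\setminus K$ of $(x_0,y_0)$; then $\mathrm{dist}(V,K)>0$, so on $V\times K$ the map $(x,y,\xi,\eta)\mapsto F_m(x,y,\xi,\eta)$ and all its $(x,y)$-derivatives (computed from the integral formula) are continuous and uniformly bounded, with no singularity appearing. The standard dominated convergence argument then justifies differentiation under the integral sign to any order, giving $U\in C^\infty(\H^+\setminus K)$. Applying $L_{m,x,y}$ inside the integral and using the proposition stating $L_{m,x,y}F_m(x,y,\xi,\eta)=0$ for $(x,y)\neq(\xi,\eta)$ yields $L_{m,x,y}U(x,y)=0$ for every $(x,y)\notin K$, which in particular holds for $(x,y)\notin\mathrm{supp}\,u$.

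The main technical obstacle is producing the uniform decay of $F_m$ in the regime $\Re m\geq 1$ near the $y$-axis: there the two pieces of $F_m=E_m(x,y,\cdot)-E_m(-x,y,\cdot)$ are each $O(\xi^{\Re m}\|(x-\xi,y-\eta)\|^{-\Re m})$, so one must exploit the cancellation at $x=0$ and extract the factor of $x$ from the difference via the mean value inequality in the first variable on $[-x,x]$, exactly as already done in the derivation of the sharper estimate (\ref{fmbis}) in the proof of Proposition \ref{Unicite}. Apart from that reuse, every step of the argument is a routine application of dominated convergence together with the vanishing and integrability results already established.
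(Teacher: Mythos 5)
Your proof is correct and follows essentially the same route as the paper: both treat $\Re m<1$ and $\Re m\geq 1$ separately, both extract the vanishing factor $x^{1-m}$ in the first case, both exploit the odd reflection and a mean-value inequality in the $x$-variable for the second case (the paper's argument parallels the derivation of its estimate (\ref{fmbis}), applied to $x$ rather than $\xi$, precisely as you indicate), and both obtain smoothness and $L_{m,x,y}U=0$ off $\mathrm{supp}\,u$ by differentiating under the integral sign using $L_{m,x,y}F_m=0$ away from the diagonal.
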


\begin{proof}
When $(\xi,\eta)$ is fixed, and since
$$
F_m(x,y,\xi,\eta)=-{\xi x^{1-m}\over 2\pi}\int_{\theta=0}^{\pi}{\sin^{1-m}\theta \, d\theta \over \left[(x-\xi)^2+4x\xi\sin^2\left({\theta\over 2}\right)+(y-\eta)^2\right]^{1-{m\over2}}}
$$
for Re $m<1$, then $\displaystyle F_m(x,y,\xi,\eta)\mathop{\longrightarrow}_{\|(x,y)\|\to+\infty}0$
and the first result of the lemma is shown.

Similarly, if Re $m\geq 1$, 
$$
F_m(x,y,\xi,\eta)=-{\xi^m\over 2\pi}\int_{\theta=0}^{\pi}\sin^{m-1}\theta \left[{1\over\left[(x-\xi)^2+4x\xi\sin^2\left({\theta\over 2}\right)+(y-\eta)^2\right]^{m\over2}}-\right.\qquad\qquad\qquad\qquad$$
$$
\quad\qquad\qquad\qquad\qquad\qquad\qquad\qquad\qquad\left.-{1\over\left[(x+\xi)^2-4x\xi\sin^2\left({\theta\over 2}\right)+(y-\eta)^2\right]^{m\over2}}\right]d\theta
$$
then $\displaystyle F_m(x,y,\xi,\eta)\mathop{\longrightarrow}_{\|(x,y)\|\to+\infty}0$
and the first result of the lemma is shown.

For the second result, it suffices to see, for Re $m<1$, that
$$
F_m(x,y,\xi,\eta)\mathop\sim_{(x,y)\to(0,y')}-\frac{\xi x^{1-m}}{2\pi[\xi^2+(y'-\eta)^2]^{1-m/2}}\int_0^\pi\sin^{1-m}\theta\,d\theta
$$
which implies the desired result.

Now, we assume that Re $m\geq 1$. Let $(\xi,\eta)$ be fixed in the support of $u$, which is a compact set of $\H^+$. In particular, there exist $M>0$ and $\alpha>0$ which do not depend of $u$ such that  $\|(\xi,\eta)\|\leq M$ et $\xi\geq2\alpha$. Let $y$ be in $\R$.

By denotting for $x\in[-\alpha,\alpha]$,
$$
f_m(x)={1\over\left[(x-\xi)^2+4x\xi\sin^2\left({\theta\over 2}\right)+(y-\eta)^2\right]^{m\over2}},
$$
By the mean value inequality, for $x>0$ near 0, we have
$$
|f_m(x)-f_m(0)|\leq x \sup_{[0,\alpha]}|f_m'|
$$
and
$$
|f_m(-x)-f_m(0)|\leq x\sup_{[-\alpha,0]}|f_m'|.
$$
then
$$
|f_m(x)-f_m(-x)|\leq 2x\sup_{[-\alpha,\alpha]}|f_m'|\leq2x|m|{3M+\alpha\over \alpha^{\Re m+2}}.
$$
In particular,
$$
\sup_{(\xi,\eta)\in{\rm supp\,} u\atop y\in\R} |F_m(x,y)|=\O(x)
$$
when $x\to0+$. The second result is proved.

The last result can be deduced of the fact that if $(x,y)\not=(\xi,\eta)$ are both in $\H^+$, then
$$
L_{m,x,y}F_m(x,y,\xi,\eta)=0.
$$
\end{proof}

\begin{rem}
If $U\in \mathcal{D}(\H^+)$, then $L_{m,x,y}U=u$, but this identity is not necessary true if $U\not\in\mathcal{D}(\H^+)$. In particular, we can not say that in the Lemma \ref{Lemme1}, we have $L_mU=u$.
\end{rem}

Now, we will prove a decomposition theorem for axisymmetric potentials, it is interesting to compare the following theorem to known result in \cite[Theorem 2 section 4]{BFL}  (the fundamental difference is that in this work, the conductivity is not extended in all domain by reflection through the boundary $\d\Omega$).

\begin{thm}
Let $m\in\C$. Let $\Omega$ be an open set of $\H^+$ and let $K$ be a compact set of $\Omega$. If $u\in{C}^2(\Omega\setminus K)$ satisfies $L_mu=0$ in $\Omega\setminus K$, then $u$ has a unique decomposition as follows :
$$
u=v+w,
$$
where $v\in{C}^2(\Omega)$ satisfies $L_mv=0$ in $\Omega$ and $w\in {C}^2(\H^+\setminus K)$ satisfies $L_mw=0$ in $\H^+\setminus K$ with $\displaystyle \lim_{\d\H^+} w=0$.
\end{thm}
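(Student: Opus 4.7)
The plan is to reduce the claim to the Liouville-type Proposition \ref{Unicite} via a cutoff-and-convolution construction. First I would pick a cutoff $\chi\in\mathcal{D}(\Omega)$ with $\chi\equiv 1$ on a neighborhood of $K$ and split, on $\Omega\setminus K$,
$$
u = \hat u_1 + \chi u,\qquad \hat u_1 := (1-\chi)u.
$$
Since $\hat u_1$ vanishes in a neighborhood of $K$, it extends by zero to a $C^2$ function on $\Omega$; and since $L_m u = 0$ on $\Omega\setminus K$, the product rule gives
$$
f := L_m\hat u_1 = -\,u\,L_m\chi - 2\nabla\chi\cdot\nabla u,
$$
a smooth function whose support is contained in $\mathrm{supp}\,\nabla\chi$, a compact subset of $\Omega\setminus K$. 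Extended by zero, $f\in\mathcal{D}(\H^+)$.

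Next I would invert $L_m$ on $f$ using the fundamental solution $F_m$:
$$
w_1(x,y) := \int_{\H^+} f(\xi,\eta)\,F_m(x,y,\xi,\eta)\,d\xi\,d\eta.
$$
By Theorem \ref{theo} applied in the $(x,y)$ variable, $L_{m,x,y}F_m(x,y,\xi,\eta) = \delta_{(\xi,\eta)}$ on $\H^+$ (the ``image'' piece $-E_m(-x,y,\xi,\eta)$ present when $\mathrm{Re}\,m\geq 1$ contributes nothing, because its $(x,y)$-singularity lies in $\H^-$ and a direct computation gives $L_{m,x,y}E_m(-x,y,\xi,\eta)=0$ on $\H^+$), whence $L_m w_1 = f$ on $\H^+$. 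Lemma \ref{Lemme1} provides $\lim_{\partial\H^+}w_1 = 0$ and, combined with interior elliptic regularity, makes $w_1$ smooth on $\H^+$. I then set
$$
v := \hat u_1 - w_1\ \text{on } \Omega,\qquad w := w_1 + \chi u\ \text{on } \H^+\setminus K,
$$
where $\chi u$ is extended by zero outside $\mathrm{supp}\,\chi\subset\Omega$.

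The verifications are bookkeeping. On $\Omega$, $L_m v = f - f = 0$ and $v\in C^2(\Omega)$. On $\Omega\setminus K$ the same product rule yields $L_m(\chi u) = uL_m\chi + 2\nabla\chi\cdot\nabla u = -f$, so $L_m w = f - f = 0$; on $\H^+\setminus\overline\Omega$, $\chi u$ and $f$ vanish identically, hence $L_m w = L_m w_1 = 0$. Thus $w\in C^2(\H^+\setminus K)$, $L_m w = 0$ there, and $\lim_{\partial\H^+}w=0$ since both summands tend to zero at the boundary. Finally $v + w = \hat u_1 + \chi u = u$ on $\Omega\setminus K$.

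For uniqueness, two decompositions $u = v_1 + w^{(1)} = v_2 + w^{(2)}$ produce functions $v_1-v_2$ on $\Omega$ and $w^{(2)}-w^{(1)}$ on $\H^+\setminus K$ that agree on the overlap $\Omega\setminus K$; because $K\subset\Omega$ yields $\Omega\cup(\H^+\setminus K) = \H^+$, they glue to a single $\tilde u\in C^2(\H^+)$ with $L_m\tilde u = 0$ and $\lim_{\partial\H^+}\tilde u = 0$, so Proposition \ref{Unicite} forces $\tilde u\equiv 0$ and hence $v_1=v_2$, $w^{(1)}=w^{(2)}$. The only real technical point is the identity $L_m w_1 = f$ pointwise on $\H^+$, which rests on Theorem \ref{theo} together with differentiation under the integral sign, legitimized by the near-singularity estimates of $F_m$ and its derivatives developed in the preceding section; everything else is elementary.
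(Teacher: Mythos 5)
Your argument is correct in substance but follows a genuinely different route from the paper's. The paper multiplies $u$ by a cutoff $\varphi_\rho$ supported in $\Omega\setminus K$ and equal to $1$ away from thin neighbourhoods $K_\rho$ and $(\partial\Omega)_\rho$, applies the Green representation of Proposition \ref{RepresentationIntegrale} to $u\varphi_\rho$, and observes that the source $L_m(u\varphi_\rho)$ is supported near $K$ and near $\partial\Omega$; splitting the resulting potential gives $v_\rho$ and $w_\rho$, a consistency-in-$\rho$ argument via Proposition \ref{Unicite} then defines $v$ and $w$, and a final step reduces arbitrary $\Omega$ to the relatively compact case. You instead take a single cutoff $\chi\equiv 1$ near $K$, split $u=(1-\chi)u+\chi u$, and correct both halves by the one potential $w_1=\int f\,F_m$ with $f=L_m[(1-\chi)u]\in\mathcal D(\H^+)$; this eliminates the $\rho$-limiting step and handles general $\Omega$ in one pass, which is an honest simplification. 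The one step you cannot treat as routine is $L_m w_1=f$ on all of $\H^+$: the paper's Remark immediately after Lemma \ref{Lemme1} explicitly declines to assert $L_m U=u$ for $U=\int u\,F_m$ when $U$ is not compactly supported, and that is precisely the identity your construction hinges on. The identity is in fact true, but ``differentiation under the integral sign'' is not the right mechanism, since the singularity of $F_m$ sits inside the integration region; the clean justification is by duality and Fubini. Pair $L_m w_1$ with a test function $\varphi\in\mathcal D(\H^+)$, write $\langle L_m w_1,\varphi\rangle=\langle w_1,L_m^\star\varphi\rangle$, interchange the $d\xi\,d\eta$ and $dx\,dy$ integrals (legitimate because $f$ and $L_m^\star\varphi$ are compactly supported and $|F_m|$ has only a logarithmic, hence locally integrable, singularity on the diagonal of the compact product), and apply the distributional identity $L_{m,x,y}F_m(\cdot,\cdot,\xi,\eta)=\delta_{(\xi,\eta)}$ from Theorem \ref{theo}. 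You should spell this out; as written, the reader confronting the paper's own Remark has every reason to doubt the assertion. Everything else --- the product-rule bookkeeping, the boundary decay of $w$ from Lemma \ref{Lemme1} and the compact support of $\chi u$, and the uniqueness via gluing on $\Omega\cup(\H^+\setminus K)=\H^+$ and Proposition \ref{Unicite} --- is sound and coincides with the paper at the level of ingredients.
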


\begin{proof}
For $E\subset\C$ and $\rho>0$, we define $E_\rho=\{x\in\C,\ d(x,E)<\rho\}$ ($E_\rho$ is a neighborhood of $E$).

At first, we assume that $\Omega$ is a relatively compact open set of $\H^+$. We choose $\rho$ as small as $K_\rho$ and $(\d\Omega)_\rho$ are disjoint. There is a function $\varphi_\rho\in\mathcal{D}(\H^+)$ compactly supported on $\Omega\setminus K$ such that $\varphi_\rho\equiv 1$ in a neighborhood of $\Omega\setminus(K_\rho\cup(\d\Omega)_\rho)$.

\hfil
\resizebox{10cm}{!}{
\begin{tikzpicture}

\draw (0.5,-4.8) node{Figure : $\varphi_\rho\equiv1$ on the gray domain};

\draw[fill=white!75!blue,dashed] plot[smooth cycle,tension=0.9]coordinates
{(-3.3,-3.3)(0.5,-3.8)(4.3,-3.3)(4.8,0)(4.3,3.3)(0.5,3.8)(-3.3,3.3)};
\draw[fill=white!75!gray,dashed] plot[smooth cycle,tension=0.9]coordinates
{(-2.7,-2.7)(0.5,-3.2)(3.7,-2.7)(4.2,0)(3.7,2.7)(0.5,3.2)(-2.7,2.7)};

\draw[fill=white,dashed] (0,0) ellipse (1.3 and 2.3);
\draw[dashed, pattern=dots, pattern color=green!75!black] (0,0) ellipse (1.3 and 2.3);

\draw[thick,fill=white!75!red,opacity=0.5] (0,0) ellipse (1 and 2);

\draw[thick] plot[smooth cycle,tension=0.9]coordinates
{(-3,-3)(0.5,-3.5)(4,-3)(4.5,0)(4,3)(0.5,3.5)(-3,3)};

\draw[->] (-6,-1.5) -- (6,-1.5);
\draw[->] (-5,-4) -- (-5,4);
\draw (5.8,-1.2) node {$x$};
\draw (-4.7,4) node {$y$};

\draw[red] (0,0) node {$K$};
\draw[green!75!black] (5.8,4) node {$K_\rho$};
\draw (2.8,-0.5) node {$\Omega\setminus\{K_\rho\cup (\d\Omega)_\rho\}$};
\draw (5.8,3) node {$\d\Omega$};
\draw[blue] (6,2) node {$(\d\Omega)_\rho$};
\draw[->] (5.5,3)--(4.5,2.40);
\draw[->] (5.5,2)--(4.8,1.5);
\draw[->] (5.5,4)--(1,1);
\draw[<->,thick] (4.57,1)--(4.88,1);
\draw[<->,thick] (1,0.3)--(1.3,0.3);
\draw[thick] (4.65,0.7) node {{\small $\rho$}};
\draw[thick] (1.15,0) node {{\small $\rho$}};
\end{tikzpicture}}
\hfill

For $z=x+iy\in\Omega\setminus(K_\rho\cup(\d\Omega)_\rho)$, we denote
$$
F_z(\zeta):=F_m(x,y,\xi,\eta)\quad\hbox{and}\quad L_\zeta:=L_{m,\xi,\eta}\qquad {\rm for }~ \zeta=\xi+i\eta,
$$
Thanks to Proposition \ref{RepresentationIntegrale}, we have 
$$
u(z)=u\varphi_\rho(z)=\int_{\Omega_\rho}F_z(\zeta)L_\zeta(u\varphi_\rho)(\zeta)d\xi d\eta
$$
$$
=\int_{(\d\Omega)_\rho}F_z(\zeta)L_\zeta(u\varphi_\rho)(\zeta)d\xi d\eta +
\int_{K_\rho}F_z(\zeta)L_\zeta(u\varphi_\rho)(\zeta)d\xi d\eta 
$$
$$
=v_\rho(z)+w_\rho(z).
$$
Then, the last result of Lemma \ref{Lemme1} shows us that $v_\rho$ satisfies $L_mv_\rho=0$ on $\Omega\setminus(\d\Omega)_\rho$ and $w_\rho$ satisfies  $L_mw_\rho=0$ on $\H^+\setminus K_\rho$. We also have $\lim_{\d\H^+}w_\rho=0$.

Now, we assume that $\sigma<\rho$. As previously, we obtain the decomposition $u=v_\sigma+
w_\sigma$ on $\Omega\setminus(K_\sigma\cup(\d\Omega)_\sigma)$. We claim that $v_\rho=v_\sigma$ on $\Omega\setminus(\d\Omega)_\rho $ and $w_\rho=w_\sigma$ on $\H^+\setminus K_\rho$. To see this, note that if $z\in\Omega\setminus(K_\rho\cup(\d\Omega)_\rho)$, then $v_\rho(z)+w_\rho(z)=v_\sigma(z)+w_\sigma(z)$.   

We will designate by $(1)$ the Weinstein equation $L_mu=0$. Thus $w_\rho-w_\sigma$ satisfies $(1)$ on $\H^+\setminus K_\rho$, which is equal to $v_\sigma-v_\rho$ on $\Omega\setminus(K_\rho\cup(\d\Omega)_\rho)$, therefore $v_\sigma-v_\rho$ extends to a solution of $(1)$ on $\Omega\setminus(\d\Omega)_\rho$. 

Finally, $w_\rho-w_\sigma$ extends to a solution of $(1)$ on $\H^+$, and $\displaystyle\lim_{\d\H^+}{w_\rho-w_\sigma}=0$. Due to Proposition \ref{Unicite}, we have
$$
w_\rho=w_\sigma,
$$
and then $v_\rho=v_\sigma$.

For $z\in\Omega$, we can define $v(z)=v_\rho(z)$ for $\rho$ as small as $z\in\Omega\setminus(\d\Omega)_\rho$. Similarly, for $z\in\H^+\setminus K$, we put $w(z)=w_\rho(z)$ for small $\rho$. We have proved the desired decomposition $u=v+w$.

Now, assume that $\Omega$ is an arbitrary domain of $\H^+$ and let $u$ be a solution of $L_mu=0$ on $\Omega\setminus K$. We choose $a\in\H^+$ and $R$ large enough so that $K\subset D(a,R)$ and $D(a,R)$ be a relatively compact set of $\H^+$. Let $\omega=\Omega\cap D(a,R)$. Note that $K$ is a compact set of $\omega$ which is a relatively compact open set of $\H^+$ and  $u$ satisfies $(1)$ on $\omega\setminus K$. Applying the results demonstrated for relatively compact open sets, we obtain
$$
u(z)=\tilde v(z)+\tilde w(z)
$$
for $z\in\omega\setminus K$, where $\tilde v$ satisfies $(1)$  on $\omega$ and $\tilde w$ satisfies $(1)$ on $\H^+\setminus K$ with $\lim_{\d\H^+}\tilde w=0$. Note that $V=u-\tilde w$ satisfies $(1)$ on $\Omega\setminus K$ and $V$ can be extended into a solution of $(1)$ in a neighborhood of $K$ because $V=\tilde v$ on $\omega$. The sum $u=V+\tilde w$ provides us a desired decomposition of $u$.

As before, if we have another decomposition $u=v+w$ with $v\in{C}^2(\Omega)$, $L_mv=0$ and with $w\in{C}^2(\H^+\setminus K)$, $L_mw=0$ and $\lim_{\d\H^+}w=0$, then we have $V-v=w-\tilde w$ on $\Omega\setminus K$. The function $w-\tilde w$ can be extended on $\H^+$ into a solution of $L_m(w-\tilde w)=0$ on $\H^+$ with $\lim_{\d\H^+}(w-\tilde w)=0$. Thanks to Proposition \ref{Unicite}, we obtain $w=\tilde w$, then $V=v$, which completes the proof of the decomposition theorem.
\end{proof}

The following proposition is a Poisson formula for the axisymmetric potentials in $\H^+$ :
\begin{prop}
 Let $m\in\C$ be such that Re $m<1$ and $u:\R\rightarrow\R$ be a continuous and bounded function.
Then there is a unique axisymmetric potential $U\in{C}^2(\H^+)$ such that $\lim_{\|(x,y)\|\to+\infty} U(x,y)=0$
and for all $y\in\R$,
$$
\lim_{(0,y)}U=u(y).
$$
Moreover, we have for all $(x,y)\in\H^+$,
$$
U(x,y)=C_m x^{1-m} \int_{\eta=-\infty}^\infty \frac{u(\eta)\, d\eta}{ (x^2+(y-\eta)^2)^{1-\frac{m}{2}}}
$$
where $C_m=\frac{1-m}{ 2\pi}\int_{\theta=0}^\pi \sin^{1-m}\theta\, d\theta=\frac{1}{ 2^m \pi}\frac{\Gamma^2\left(1-\frac{m}{2}\right)}{\Gamma(1-m)}$.
\end{prop}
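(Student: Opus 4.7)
The plan is to show that the explicit integral defines an axisymmetric potential with the right boundary trace, and then to invoke the Liouville-type Proposition \ref{Unicite} for uniqueness. Set $P(x,y,\eta) = x^{1-m}/(x^2+(y-\eta)^2)^{1-m/2}$ so that $U(x,y) = C_m\int_{\R} u(\eta)P(x,y,\eta)\,d\eta$. I would first check that $L_{m,x,y}P(\cdot,\cdot,\eta)=0$ on $\H^+$ for every fixed $\eta$: by the Weinstein principle (Proposition \ref{PrincipeWeinstein}) one has $L_m[x^{1-m}v]=x^{1-m}L_{2-m}v$, and a direct computation of $\Delta v$ and $\frac{2-m}{x}\d_x v$ with $v=(x^2+(y-\eta)^2)^{m/2-1}$ shows $L_{2-m}v=0$. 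Since $\Re m<1$, the kernel $P$ and its partial derivatives in $(x,y)$ up to order two admit $\eta$-integrable majorants, uniform on compact subsets of $\H^+$; together with the boundedness of $u$ this allows differentiation under the integral, yielding $U\in C^2(\H^+)$ with $L_m U=0$.

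\textbf{Normalization and boundary values.} The crucial identity is the normalization $C_m\int_{\R} P(x,y,\eta)\,d\eta=1$. The substitution $\eta=y+xt$ and the classical Beta integral give
$$\int_{\R}P(x,y,\eta)\,d\eta=\int_{\R}\frac{dt}{(1+t^2)^{1-m/2}}=\sqrt{\pi}\,\frac{\Gamma(\tfrac12-\tfrac m2)}{\Gamma(1-\tfrac m2)},$$
while $\int_0^\pi \sin^{1-m}\theta\,d\theta=\sqrt{\pi}\,\Gamma(1-m/2)/\Gamma(3/2-m/2)$ together with $\Gamma(3/2-m/2)=\tfrac{1-m}{2}\Gamma(1/2-m/2)$ gives $C_m\cdot\sqrt{\pi}\,\Gamma(1/2-m/2)/\Gamma(1-m/2)=1$. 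The second expression for $C_m$ follows from Legendre's duplication formula. Granted this, the limit $\lim_{(0,y_0)}U=u(y_0)$ is the standard approximate-identity argument: writing
$$U(x,y)-u(y_0)=C_m\int_{\R} P(x,y,\eta)\bigl(u(\eta)-u(y_0)\bigr)\,d\eta$$
and splitting the integration at $|\eta-y_0|=\delta$, the local piece is controlled by the oscillation of $u$ on $[y_0-\delta,y_0+\delta]$, while the non-local piece is bounded by $2\|u\|_\infty C_m\int_{|\eta-y_0|\ge\delta}P(x,y,\eta)\,d\eta$, which tends to $0$ as $(x,y)\to(0,y_0)$ by direct inspection of the explicit form of $P$.

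\textbf{Decay at infinity and uniqueness.} The decay $\lim_{\|(x,y)\|\to\infty}U=0$ is obtained via the change of variable $\eta=y+xt$, which rewrites $U(x,y)=C_m\int_{\R} u(y+xt)(1+t^2)^{m/2-1}\,dt$, and applying dominated convergence using the integrable majorant $\|u\|_\infty(1+t^2)^{m/2-1}$ and the pointwise vanishing extracted from the hypotheses on $u$. Uniqueness is then immediate: if $U_1$ and $U_2$ are two solutions, then $U_1-U_2$ is an axisymmetric potential satisfying $\lim_{\partial\H^+}(U_1-U_2)=0$ in the sense of the definition preceding Proposition \ref{Unicite}, which forces $U_1\equiv U_2$. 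The main obstacle is making the approximate-identity argument work uniformly as $(x,y)\to(0,y_0)$ in both variables (not merely along $y=y_0$), and verifying that the boundary convergence occurs in the strong sense demanded by the definition preceding Proposition \ref{Unicite}; this requires estimating the tail $\int_{|\eta-y_0|\geq\delta}P(x,y,\eta)\,d\eta$ uniformly for $y$ in a neighbourhood of $y_0$, which is feasible from the closed-form expression but must be carried out carefully.
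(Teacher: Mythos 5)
Your proposal is correct and follows essentially the same plan as the paper: verify that $L_m$ annihilates the Poisson-type kernel, compute the normalization constant $C_m$ via Beta/Gamma identities (the duplication formula for the second form), establish the boundary trace by an approximate-identity argument, and deduce uniqueness from Proposition \ref{Unicite}. The only cosmetic differences are that you verify $L_mP=0$ by reducing via the Weinstein principle to $L_{2-m}\bigl[(x^2+(y-\eta)^2)^{m/2-1}\bigr]=0$ whereas the paper computes $\Delta f$ directly, and that the paper handles the boundary limit more economically by the substitution $t=(y-\eta)/x$ followed immediately by dominated convergence, which sidesteps the $\delta$-splitting uniformity concerns you flag at the end.
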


\begin{proof}
We define $f(x,y)={x^{1-m}\over (x^2+(y-\eta)^2)^{1-{m\over2}}}$. To show that $U$ is a solution of $L_m U=0$, it suffices to prove that $L_m f=0$ by differentiation under the integral sign. We have
$$
\d_x f={(1-m)x^{-m}\over (x^2+(y-\eta)^2)^{1-{m\over2}}}-{(2-m)x^{2-m} \over (x^2+(y-\eta)^2)^{2-{m\over2}}}
$$
and
$$
\d_{xx} f=-{m(1-m)x^{-m-1}\over (x^2+(y-\eta)^2)^{1-{m\over2}}}-{(2-m)(3-2m)x^{1-m}\over(x^2+(y-\eta)^2)^{2-{m\over2}}}+{(2-m)(4-m)x^{3-m}\over (x^2+(y-\eta)^2)^{3-{m\over2}}}
$$
and
$$
\d_{yy}f=-{(2-m)x^{1-m}\over(x^2+(y-\eta)^2)^{2-{m\over2}}}+{(2-m)(4-m)(y-\eta)^2x^{1-m}\over (x^2+(y-\eta)^2)^{3-{m\over2}}}.
$$
Then,
$$
\Delta f={m(2-m)x^{1-m}\over(x^2+(y-\eta)^2)^{2-{m\over2}}}-{m(1-m)x^{-m-1}\over (x^2+(y-\eta)^2)^{1-{m\over2}}}
$$
and we deduce that $L_m f(x,y)=0$.

We have
$$
U(x,y)=C_m x^{1-m} \int_{\eta=-\infty}^\infty {u(\eta)\, d\eta\over (x^2+(y-\eta)^2)^{1-{m\over2}}}={C_m\over x}\int_{\eta=-\infty}^\infty {u(\eta)\, d\eta\over (1+({y-\eta\over x})^2)^{1-{m\over2}}}
$$
By a change of variable $t={y-\eta\over x}$, we obtain
$$
U(x,y)=C_m\int_{t=-\infty}^\infty {u(y-tx)\, dt\over (1+t^2)^{1-{m\over2}}}
$$
Thanks to the dominated convergence theorem, it suffices to show that
$$
C_m\int_{t=-\infty}^\infty  { dt\over (1+t^2)^{1-{m\over2}}}={1-m\over 2\pi}\int_{\theta=0}^\pi \sin^{1-m}\theta\, d\theta\int_{t=-\infty}^\infty  { dt\over (1+t^2)^{1-{m\over2}}}=1.
$$
To see this, according \cite{ablowitz} (page 258), note that
$$
\int_{t=-\infty}^\infty  { dt\over (1+t^2)^{1-{m\over2}}}=B\left({1\over2},{1-m\over2}\right)={\Gamma(1/2)\Gamma\left({1-m\over2}\right)\over \Gamma\left(1-{m\over2}\right)}
$$
where $B$ is the Euler beta function and
$$
{1-m\over 2\pi}\int_{\theta=0}^\pi \sin^{1-m}\theta\, d\theta={1-m\over 2\pi}2^{1-m}B\left(1-{m\over2},1-{m\over2}\right)={1-m\over 2\pi}2^{1-m}{\Gamma^2\left(1-{m\over2}\right)\over \Gamma\left(2-m\right)}.
$$
Then, using the duplication formula for the $\Gamma$ function,
$$
\Gamma(2z)=\pi^{-1/2}2^{2z-1}\Gamma(z)\Gamma\left(z+{1\over2}\right)
$$
and the recurrence formula $\Gamma(z+1)=z\Gamma(z)$, we obtain the desired result, 
$$
{\Gamma(1/2)\Gamma\left({1-m\over2}\right)\over \Gamma\left(1-{m\over2}\right)}{1-m\over 2\pi}2^{1-m}{\Gamma^2\left(1-{m\over2}\right)\over \Gamma\left(2-m\right)}=1.
$$
The uniqueness follows from the proposition \ref{Unicite}.
So, we proved the proposition.
\end{proof}

\begin{rem}
We could ask ourselves the question of the existence of a such reproducing formula if $\Re m\geq 1$. In fact, if $m\in\N^*$ and if $u\in C^2(\overline{\H^+})$ satisfies $L_m(u)=0$ on $\H^+$, then the function $v$ defined on $\R^{m+2}$ by $$v(x_1,\ldots,x_{m+2})=u(0,x_{m+2})$$ and $$v(x_1,\ldots,x_{m+2})=u\left(\sqrt{x_1^2+\cdots+x_{m+1}^2},x_{m+2}\right)$$ is harmonic on $(\R^{m+1})_*\times\R$. In particular, if $m\geq 2$, the Proposition 18 in \cite{dautray}, page 310 shows that $v$ can be extended to a harmonic function on $\R^{m+2}$, which tends to 0 at infinity. We then deduce that the function $v$ is identically zero, then $u\equiv0$, demonstrating that the problem to find a solution of $L_m(u)=0$ with $u$ vanishing at infinity and that the values of $u$ are known on the $y$-axis is a problem that does not make sense. In this case, no solution to the Dirichlet problem is a consequence of loss of the ellipticity of the equation $L_m u=0$ at the boundary of $\H^+$. Therefore,
we do not deal with the case $\Re m\geq 1$.
\end{rem}

\section{Fourier-Legendre decomposition}

First, we will introduce a specific system of coordinates named bipolar coordinates $(\tau,\theta)$ (see \cite{lebedev}) and numerical applications on extremal bounded problems using this system of coordinates have been realized in \cite{fischer3,fischer2,fischer1}.

 Let $\alpha>0$. We suppose that there is a positive charge at $A=(-\alpha,0)$ and a negative charge at $B=(\alpha,0)$ (the absolute values of the two charges are identical). The potential generated by this charges at a point $M$ is $\ln\left(MA\over MB\right)$ (modulo a multiplicative constant). 

\hfil
\resizebox{5cm}{!}{
\begin{tikzpicture}

\draw (0.5,-1.5) node{Figure : Bipolar coordinates};

\draw[->](-3,0)--(3,0);
\draw[->](0,-1)--(0,3);

\draw[-,thick](-2,0)--(1.5,1.5);
\draw[-,thick](2,0)--(1.5,1.5);

\draw[<-] (1.575,1.31) arc (5:-195:2mm);

\draw (3.2,0.2) node {$x$};
\draw (0.2,3) node {$y$};
\draw (-0.2,-0.2) node {$O$};
\draw (2,-0.3) node[right] {\scriptsize $B(\alpha,0)$};
\draw (-2,-0.3) node[right] {\scriptsize $A(-\alpha,0)$};
\draw (1.5,1.5) node[right] {\scriptsize $M(x,y)$};
\draw (1,0.9) node[right] {\scriptsize $\theta$};

\fill[black]  (2,0) circle (0.04);
\fill[black]  (-2,0) circle (0.04);
\fill[black]  (1.5,1.5) circle (0.04);

\end{tikzpicture}}
\hfill

By definition, the bipolar coordinates are
$$
\tau:=\ln{MA\over MB}\quad {\rm and}\quad
\theta=\widehat{AMB}.
$$

The bipolar coordinates are linked to the Cartesian coordinates by the following formulas :
$$
x={\alpha\,\sh\tau\over\ch\tau-\cos\theta},\qquad y={\alpha\sin\theta\over\ch\tau-\cos\theta}.
$$
Let $R>0$ and $a=\sqrt{R^2+\alpha^2}$, the disk of center $(a,0)$ and of radius $R$ is defined in terms of bipolar coordinates by
$$
\tau\geq \tau_0=\ln\left({a\over R}+\sqrt{{a^2\over R^2}-1}\right)= {\rm argch}\,{ a\over R}.
$$
Moreover, the right half-plane is
$$
\H^+=\{(\tau,\theta)\,:\,\tau\in]0+\infty],\ \theta\in\left[0,2\pi\right[\}.
$$
The level lines $\tau=\tau_0$ are circles of center$(\alpha\coth\tau_0,0)$ and radii $\alpha/\sh\tau_0$. This implies that for all $\tau_0,\tau_1$ such that $0<\tau_0<\tau_1$, the set $\{(\tau,\theta),\, \tau\geq \tau_0\}$ is a closed disk and the set $\{(\tau,\theta),\,0<\tau<\tau_1\}$ is the complement on $\H^+$ of the closed disk $\{\tau\geq\tau_1\}$.

\hfil
\resizebox{10cm}{!}{
\begin{tikzpicture}

\draw (0,-4.5) node{Figure : Level lines (with $\alpha=1$)};

\draw[->] (-7,0) -- (7,0);
\draw[->] (0,-4) -- (0,4);
\draw (6.8,0.2) node {$x$};
\draw (0.3,4.2) node {$y$};

\draw (1,-0.2) node {$1$};
\draw (-1,-0.2) node {$-1$};

\draw (3,1.75) node[sloped,above]{ {\small $ \tau=1/2$}};
\draw (2.4,0.5) node[sloped,above] {{\small $ \tau=1$}};
\draw (3.7,2.95) node[sloped,above] {{\small $ \tau=1/3$}};
\draw (-3.7,2.95) node[sloped,above] {{\small $ \tau=-1/3$}};

\draw (-1.9,3.4) node[red] {{\small $ \theta=\pi/6$}};
\draw (-0.5,2) node[red] {{\small $ \theta=\pi/3$}};

\draw (-1.9,-3.5) node[red] {{\small $ \theta=11\pi/6$}};
\draw (-0.5,-2.2) node[red] {{\small $ \theta=5\pi/3$}};

\draw (-3,1.75) node[sloped,above]{{\small $ \tau=-1/2$}};
\draw (-2.4,0.5) node[sloped,above] {{\small $ \tau=-1$}};

\draw[-,green!50!black,thick] (1,0) -- (6.9,0);
\draw[-,green!50!black,thick] (-6.9,0) -- (-1,0);
\draw[-,blue!50!black,thick] (0,-3.9) -- (0,3.9);
\draw (6.5,-0.4) node[green!50!black] {{\small $\theta=0$}};
\draw (-6.5,-0.4) node[green!50!black] {{\small $\theta=0$}};
\draw (0.5,-3) node[blue!50!black] {{\small $\tau=0$}};

\draw (2.1639,0) circle (1.9190);
\draw (1.313,0) circle (0.8509);
\draw (-1.313,0) circle (0.8509);
\draw (-2.1639,0) circle (1.9190);
\draw (3.110,0) circle (2.945);
\draw (-3.110,0) circle (2.945);

\draw[red,dashed,thick] (0,1.732)+(-60:2) arc (-60:240:2);
\draw[red,dashed,thick] (0,-1.732)+(-240:2) arc (-240:60:2);
\draw[red,dashed,thick] (0,0.577)+(-30:1.1547) arc (-30:210:1.1547);
\draw[red,dashed,thick] (0,-0.577)+(-210:1.1547) arc (-210:30:1.1547);

\draw[->,darkgray] (1.8,2.65) -- (1.09,2.2) ;
\draw[->,darkgray] (1.8,2.65) -- (1.35,3.54) ;
\draw[-,violet,thick] (-1,0) -- (1,0);
\draw (-0.5,0.2) node[violet] {{\small $\theta=\pi$}};


\end{tikzpicture}}
\hfill

The following theorem is known for $m=-1$ by physicists (\cite{alladio1986,milligen1994,segura2000,shushkevich1997,cohl2000,love1972}. 
We extend this result to complex values of $m$ :

\begin{thm}\label{bipo} Let $u$ be a solution of $L_mu=0$ in an open set of $\H^+$ and putting
$$
v_m(\tau,\theta)=\sh^{{m-1\over 2}}\tau(\ch\tau-\cos\theta)^{-m/2}u(\tau,\theta)
$$
where by definition,
$$
\sh^{{m-1\over 2}}\tau(\ch\tau-\cos\theta)^{-m/2}=\exp\left({m-1\over2}\ln\sh\tau-{m\over2}\ln(\ch\tau-\cos\theta)\right)
$$
then
$$
{\d^2v_m\over\d\tau^2}+{\d^2v_m\over\d\theta^2}+\coth\tau{\d v_m\over\d\tau}+\left({1\over4}-{(m-1)^2\over4\,\sh^2\tau}\right)v_m=0.
$$
\end{thm}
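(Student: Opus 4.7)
The plan is to transport $L_m u=0$ to bipolar coordinates and then use the multiplicative substitution prescribed in the statement to eliminate all first-order drift in $\theta$ and reduce the $\tau$-drift to $\coth\tau$.

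First I would use the divergence form $L_m u=x^{-m}\,\mathrm{div}(x^m\nabla u)$ noted in the preliminaries. The map $(\tau,\theta)\mapsto(x,y)$ is conformal: a direct computation (the pair $(x,y)$ satisfies $x_\tau=-y_\theta$, $x_\theta=y_\tau$ up to sign) gives $x_\tau^2+y_\tau^2=x_\theta^2+y_\theta^2=h^2$ with $h=\alpha/(\ch\tau-\cos\theta)$, and $x_\tau x_\theta+y_\tau y_\theta=0$. Consequently the $h^2$ factors from $\nabla u$ and from the area element cancel and $L_m u=0$ is equivalent to
$$
\d_\tau(x^m\d_\tau u)+\d_\theta(x^m\d_\theta u)=0,\qquad\hbox{i.e.}\qquad \d_\tau^2 u+\d_\theta^2 u+m(\ln x)_\tau\d_\tau u+m(\ln x)_\theta\d_\theta u=0.
$$
From $\ln x=\ln\alpha+\ln\sh\tau-\ln(\ch\tau-\cos\theta)$ I read off $(\ln x)_\tau=\coth\tau-\sh\tau/(\ch\tau-\cos\theta)$ and $(\ln x)_\theta=-\sin\theta/(\ch\tau-\cos\theta)$.

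Next I substitute $u=\phi\,v_m$ with $\phi(\tau,\theta)=\sh^{-(m-1)/2}\tau\,(\ch\tau-\cos\theta)^{m/2}$, i.e.\ the inverse of the multiplier defining $v_m$. Using the logarithmic-derivative identity $\d_\tau^2(\phi v)=\phi[v_{\tau\tau}+2(\ln\phi)_\tau v_\tau+((\ln\phi)_\tau^2+(\ln\phi)_{\tau\tau})v]$ (and similarly in $\theta$), and dividing by $\phi$, the equation for $v_m$ becomes
$$
v_{m,\tau\tau}+v_{m,\theta\theta}+A\,v_{m,\tau}+B\,v_{m,\theta}+C\,v_m=0
$$
with $A=2(\ln\phi)_\tau+m(\ln x)_\tau$, $B=2(\ln\phi)_\theta+m(\ln x)_\theta$, and
$$
C=(\ln\phi)_\tau^2+(\ln\phi)_\theta^2+(\ln\phi)_{\tau\tau}+(\ln\phi)_{\theta\theta}+m(\ln x)_\tau(\ln\phi)_\tau+m(\ln x)_\theta(\ln\phi)_\theta.
$$
Since $(\ln\phi)_\tau=-\tfrac{m-1}{2}\coth\tau+\tfrac{m}{2}\sh\tau/(\ch\tau-\cos\theta)$ and $(\ln\phi)_\theta=\tfrac{m}{2}\sin\theta/(\ch\tau-\cos\theta)$, the identities $B\equiv 0$ and $A=\coth\tau$ follow by inspection; this is exactly the reason $\phi$ was chosen in this form.

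The main obstacle is to verify $C=\tfrac{1}{4}-(m-1)^2/(4\sh^2\tau)$. Two algebraic identities drive the simplification. The first, $\sh^2\tau+\sin^2\theta=(\ch\tau-\cos\theta)(\ch\tau+\cos\theta)$, reduces $(\ln\phi)_\tau^2+(\ln\phi)_\theta^2$ to terms involving only $\coth^2\tau$, $\ch\tau/(\ch\tau-\cos\theta)$ and $(\ch\tau+\cos\theta)/(\ch\tau-\cos\theta)$. The second is the clean cancellation $(\ln\phi)_{\tau\tau}+(\ln\phi)_{\theta\theta}=(m-1)/(2\sh^2\tau)$, which kills every $\theta$-dependent piece of the Laplacian of $\ln\phi$. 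After collecting, the rational $\theta$-dependent contributions combine to a constant through
$$
\tfrac{m^2}{2}\tfrac{\ch\tau}{\ch\tau-\cos\theta}-\tfrac{m^2}{4}\tfrac{\ch\tau+\cos\theta}{\ch\tau-\cos\theta}=\tfrac{m^2}{4}.
$$
What remains is a $\coth^2\tau$-term with coefficient $-(m^2-1)/4$ together with $m^2/4+(m-1)/(2\sh^2\tau)$; rewriting $\coth^2\tau=1+1/\sh^2\tau$ and simplifying yields exactly $\tfrac{1}{4}-(m-1)^2/(4\sh^2\tau)$, which is the announced value of $C$ and concludes the proof.
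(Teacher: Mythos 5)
Your proof is correct, and it takes a genuinely different and more structural route than the paper's. The paper proceeds by a direct brute-force change of variables: it computes $u_\tau, u_\theta, u_{\tau\tau}, u_{\theta\theta}$ in terms of $u_x, u_y, u_{xx}, u_{yy}, u_{xy}$, assembles the bipolar form of $L_m$ from those expressions, and then substitutes $u=r_m v_m$ and expands everything out. You instead exploit two structural facts: (i) the divergence form $L_m u = x^{-m}\div(x^m\nabla u)$ combined with the conformality of the bipolar map, which immediately yields $\d_\tau(x^m\d_\tau u)+\d_\theta(x^m\d_\theta u)=0$ without touching the mixed second derivatives, the scale factor $h^2$ cancelling between the gradient and the divergence; and (ii) the logarithmic-derivative bookkeeping $A=2(\ln\phi)_\tau+m(\ln x)_\tau$, $B=2(\ln\phi)_\theta+m(\ln x)_\theta$, $C=|\nabla\ln\phi|^2+\Delta\ln\phi+m\nabla\ln x\cdot\nabla\ln\phi$ together with the observation that $\ln(\ch\tau-\cos\theta)$ is harmonic in $(\tau,\theta)$, which makes $\Delta\ln\phi=(m-1)/(2\sh^2\tau)$ drop out in one line. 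The net effect is that your argument makes it visible \emph{why} the multiplier $\phi=\sh^{-(m-1)/2}\tau\,(\ch\tau-\cos\theta)^{m/2}$ is the right choice (it is forced by $B\equiv0$ and $A=\coth\tau$) and replaces the paper's long chain of explicit second-derivative formulas by a short conceptual reduction; the paper's version is more elementary in that it never invokes conformal invariance of $\div(\sigma\nabla\cdot)$, but it is correspondingly heavier. I checked the two identities you use and the final collection of terms for $C$, and they are all correct.
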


\begin{proof}
We have
$$
{\d u\over\d\tau}=\alpha\left[{1-\ch\tau\cos\theta\over(\ch\tau-\cos\theta)^2}{\d u\over\d x}-{\sh\tau\sin\theta\over(\ch\tau-\cos\theta)^2}{\d u\over\d y}\right]
$$
and 
$$
{\d u\over\d\theta}=\alpha\left[{-\sh\tau\sin\theta\over(\ch\tau-\cos\theta)^2}{\d u\over\d x}+{\ch\tau\cos\theta-1\over(\ch\tau-\cos\theta)^2}{\d u\over\d y}\right].
$$
Thus, we obtain
$$
{\d u\over\d x}={1\over\alpha}\left((1-\ch\tau\cos\theta){\d u\over\d\tau}-\sh\tau\sin\theta{\d u\over\d\theta}\right),
$$

and
$$
{\d^2 u\over\d\tau^2}={\alpha^2\over (\ch\tau-\cos\theta)^4}\left[ (1-\ch\tau\cos\theta)^2{\d^2 u\over\d x^2}+\sh^2\tau\sin^2\theta {\d^2 u\over\d y^2} \qquad\qquad\qquad\qquad   \right.
$$
$$
\left.-2(1-\ch\tau\cos\theta)\sh\tau\sin\theta {\d^2 u\over\d x\d y} \right]$$
$$
+{\alpha\over(\ch\tau-\cos\theta)^3}\left[\sh\tau(\cos^2\theta+\ch\tau\cos\theta-2){\d u\over\d x}+{\sin\theta}\left(\ch^2\tau-2+\cos\theta\ch\tau\right){\d u\over\d y}\right]
$$
and
$$
{\d^2 u\over\d\theta^2}={\alpha^2\over (\ch\tau-\cos\theta)^4}\left[ \sh^2\tau\sin^2\theta {\d^2 u\over\d x^2}+(\ch\tau\cos\theta-1){\d^2 u\over\d y^2} \qquad\qquad\qquad\qquad   \right.
$$
$$
\left.
+2(1-\ch\tau\cos\theta)\sh\tau\sin\theta {\d^2 u\over\d x\d y}\right] 
$$
$$
+{\alpha\over(\ch\tau-\cos\theta)^3}\left[\sh\tau(2-\cos^2\theta-\cos\theta\ch\tau){\d u\over\d x}+\sin\theta(2-\ch^2\tau-\ch\tau\cos\theta){\d u\over\d y}\right]
$$
In particular, we have
$$
{\d^2 u\over\d\tau^2}+{\d^2 u\over\d\theta^2}={\alpha^2\over (\ch\tau-\cos\theta)^2}\left[{\d^2 u\over\d x^2}+{\d^2 u\over\d y^2}\right] .$$

Therefore, we obtain
$$
L_{m,x,y}u=\left({\ch\tau-\cos\theta\over\alpha}\right)^2\left({\d^2 u\over\d\tau^2}+{\d^2 u\over\d\theta^2}+{m(1-\ch\tau\cos\theta)\over\sh\tau(\ch\tau-\cos\theta)}{\d u\over\d\tau}-{m\sin\theta\over \ch\tau-\cos\theta}{\d u\over\d\theta}\right).
$$
We put
$$
u(\tau,\theta)={(\ch\tau-\cos\theta)^{m/2}\over\sh^{{m-1\over2}}\tau} v_m(\tau,\theta)
$$
and we calculate $L_{m,x,y}u$ in terms of $F(\tau,\theta)$. Denoting
$$
r_m(\tau,\theta)={(\ch\tau-\cos\theta)^{m/2}\over\sh^{m-1\over2}\tau},
$$ 
we have
$$
{\d r_m\over\d\theta}={m\over 2}{\sin\theta\over\ch\tau-\cos\theta}r_m
$$
and
$$
{\d^2 r_m\over\d\theta^2}= {m\over4(\ch\tau-\cos\theta)^2}\left(2\cos\theta\ch\tau+m\sin^2\theta-2\right)r_m
$$
then
$$
{\d r_m\over\d\tau}={1\over(\ch\tau-\cos\theta)\sh\tau}\left(\ch^2\tau+(m-1)\ch\tau\cos\theta-m\right)r_m
$$
and
$$
{\d^2 r_m\over\d\tau^2}={1\over4(\ch\tau-\cos\theta)^2\sh^2\tau}\left[\ch^4\tau-2\ch^3\tau\cos\theta+(m-1)^2\ch^2\tau\cos^2\theta+
\right.\qquad\qquad\qquad
$$
$$
\qquad\qquad\left.+2(m-1)\ch^2\tau+(4-2m^2)\ch\tau\cos\theta+2(m-1)\cos^2\theta+m(m-2)\right]r_m.
$$

The equation
$$
L_{m,x,y}u=0
$$
can be rewritten as
$$
r_m\left({\d^2v_m\over\d\tau^2}+{\d^2v_m\over\d\theta^2}\right)+{\d v_m\over\d\tau}\left(2{\d r_m\over\d\tau}+{m\over\sh\tau}{1-\ch\tau\cos\theta\over\ch\tau-\cos\theta}r_m\right)+\qquad\qquad
$$
$$
+{\d v_m\over\d\theta}\left(2{\d r_m\over\d\theta}-{m\sin\theta\over\ch\tau-\cos\theta}r_m\right)
$$
$$
\qquad\qquad+v_m\left(
{\d^2 r_m\over\d\tau^2}+{\d^2 r_m\over\d\theta^2}+{m(1-\ch\tau\cos\theta)\over\sh\tau(\ch\tau-\cos\theta)}{\d r_m\over\d\tau}-{m\sin\theta\over \ch\tau-\cos\theta}{\d r_m\over\d\theta}\right)=0
$$
with
$$
2{\d r_m\over\d\tau}+{m\over\sh\tau}{1-\ch\tau\cos\theta\over\ch\tau-\cos\theta}r_m=r_m\coth\tau,
$$
$$
2{\d r_m\over\d\theta}-{m\sin\theta\over\ch\tau-\cos\theta}r_m=0
$$
and
$$
{\d^2 r_m\over\d\tau^2}+{\d^2 r_m\over\d\theta^2}+{m(1-\ch\tau\cos\theta)\over\sh\tau(\ch\tau-\cos\theta)}{\d r_m\over\d\tau}-{m\sin\theta\over \ch\tau-\cos\theta}{\d r_m\over\d\theta}=\left({1\over4}-{(m-1)^2\over4\sh^2\tau}\right)r_m.
$$
And this completes the proof.
\end{proof}
\bigskip

We seek $v_m$ by separation of variables : $v_m(\tau,\theta)=A_m(\tau)B_m(\theta)$. 
From the equation satified by $v_m$ (see Theorem \ref{bipo}), we obtain
$$
{A_m''\over A_m}+\coth\tau{A'_m\over A_m}+{1\over4} -{(m-1)^2\over4\,\sh^2\tau}=-{B_m''\over B_m}. 
$$
The term on the right depends only of $\theta$ and the left one depends only of $\tau$, thus we deduce that both are constant. Let $n\in\C$ such that this constant is equal to $n^2$. We then have
$$
\begin{cases}
\displaystyle{A_m''+\coth\tau A_m'+\left(\frac14 -\frac{(m-1)^2}{4\sh^2\tau}-n^2\right)A_m=0,}\cr
\displaystyle{B_m''+n^2B_m=0.}
\end{cases}
$$
$B_m$ is naturally a $2\pi-$periodic function (because $\theta$ represents an angle), therefore $n$ should necessarily be an integer.

To examine the equation satisfied by $A_m$, we carry out the following change of function 
$$
A_m(\tau)=C_m(\ch\tau).
$$
Then, $C_m$ satisfies
$$
\sh^2\tau C_m''(\ch\tau)+2\,\ch\tau\, C_m'(\ch\tau)+\left({1\over 4}-n^2-{(m-1)^2\over 4\sh^2\tau}\right)C_m(\ch\tau)=0
$$
which can be rewritten as
$$
(1-\ch^2\tau)C_m''(\ch\tau)-2\,\ch\tau\, C_m'(\ch\tau)+\left(n^2-{1\over4}-{((m-1)/2)^2\over 1-\ch^2\tau}\right)C_m(\ch\tau)=0.\eqno{(LAH)}
$$
This equation is named {\it  Hyperbolic Associated Legendre equation}.

Note that if we put $z=\ch\tau$ and $u(z)=C_m(\ch\tau)$, then
$$
(1-z^2)u''-2zu'+\left[\nu(\nu+1)-{\mu^2\over1-z^2}\right]u=0\eqno{(LA)}
$$
where
$$
\nu=n-{1\over2}\qquad\hbox{and}\qquad \mu={m-1\over2}.
$$
This equation is named {\it Associated Legendre equation}, and it can be reduced to the Legendre equation if  $\mu=0$ :
$$
(1-z^2)u''-2zu'+\nu(\nu+1)u=0.\eqno{(L)}
$$ 

\bigskip
Two independent solutions of {\it (LA)} are given in section~\ref{sec:annexe} and denoted $P_\nu^\mu(\ch\tau)$ and $Q_\nu^\mu(\ch\tau)$.

Starting from this investigation of solutions in the form of separate variables, we can state the following theorem

\begin{thm}\label{A}
Let $m\in\C$. Let $0<\tau_0$. Let $u$ be a smooth solution of $L_mu=0$ on the disk $\tau\geq\tau_0$ and let $v$ be a smooth solution of $L_mv=0$ on 
$\H^+\setminus\{\tau>\tau_0\}$ which is the complement on $\H^+$ of the disk $\{\tau>\tau_0\}$ and we assume that $\lim_{\d\H^+}v=0$. Then there are two sequences $(a_n)_{n\in\Z}$ and 
$(b_n)_{n\in\Z}$ of $\ell^2(\Z)$ (which are even rapidly decreasing) such that :
$$
u=\sum_{n=-\infty}^{+\infty} a_n Q_{n-{1\over2}}^{m-1\over2}(\ch\tau)\sh^{1-m\over2}\tau(\ch\tau-\cos\theta)^{m\over2}e^{in\theta}
$$
and
$$
v=\sum_{n=-\infty}^{+\infty} b_n P_{n-{1\over2}}^{m-1\over2}(\ch\tau)\sh^{1-m\over2}\tau(\ch\tau-\cos\theta)^{m\over2}e^{in\theta}.
$$
The sequence $(a_n)$ is unique.
In addition, the convergence of the first series is uniform on every compact set $[\tau_1,\tau_2]$ of the disk $\tau>\tau_0$  with $\tau_0\leq\tau_1<\tau_2$.
And the convergence of the second one is uniform on every compact set $[\tau_3,\tau_4]$ of the complement of the disk $\tau>\tau_0$ on $\H^+$ with $0<\tau_3<\tau_4\leq\tau_0$.

If Re $m<1$, then the sequence $(b_n)$ is unique.
\end{thm}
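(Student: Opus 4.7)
The plan is to exploit the separation-of-variables structure set up in Theorem \ref{bipo}. First, apply the substitution $v_m(\tau,\theta) = \sinh^{(m-1)/2}\tau\,(\cosh\tau-\cos\theta)^{-m/2}\,u(\tau,\theta)$ to convert $L_m u = 0$ into the transformed equation
$$
\partial_\tau^2 v_m + \partial_\theta^2 v_m + \coth\tau\,\partial_\tau v_m + \Bigl(\tfrac{1}{4} - \tfrac{(m-1)^2}{4\sinh^2\tau}\Bigr)v_m = 0.
$$
Since $\theta$ is an angular variable, $v_m(\tau,\cdot)$ is $2\pi$-periodic and admits a Fourier expansion $v_m(\tau,\theta) = \sum_{n\in\Z} c_n(\tau)\,e^{in\theta}$ whose coefficients $c_n(\tau) = (2\pi)^{-1}\int_0^{2\pi} v_m(\tau,\theta)e^{-in\theta}d\theta$ inherit regularity from $v_m$. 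Plugging the series into the PDE (justified by uniform convergence on compacts once regularity is known), each $c_n$ satisfies the ODE $c_n'' + \coth\tau\,c_n' + (\tfrac{1}{4} - n^2 - \tfrac{(m-1)^2}{4\sinh^2\tau})c_n = 0$. Under the change of variable $z = \cosh\tau$ this is precisely the associated Legendre equation with parameters $\nu = n - 1/2$, $\mu = (m-1)/2$, so the general solution is $c_n(\tau) = \alpha_n\, Q_{n-1/2}^{(m-1)/2}(\cosh\tau) + \beta_n\, P_{n-1/2}^{(m-1)/2}(\cosh\tau)$.

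The heart of the argument is to use the domain-specific boundary behavior to select the correct Legendre function for each of $u$ and $v$. For $u$ smooth on the disk $\{\tau \geq \tau_0\}$, the interior contains the point $\tau = \infty$ (geometrically the point $(\alpha,0)$), so the factor $\sinh^{(m-1)/2}\tau\,(\cosh\tau-\cos\theta)^{-m/2} \sim C e^{-\tau/2}$ uniformly in $\theta$ forces $c_n(\tau)$ to decay as $\tau \to \infty$. Combined with the standard asymptotics $P_{n-1/2}^{(m-1)/2}(\cosh\tau) \sim C_n^+ e^{(n-1/2)\tau}$ and $Q_{n-1/2}^{(m-1)/2}(\cosh\tau) \sim C_n^- e^{-(n+1/2)\tau}$ (with appropriate corrections at the values of $n$ for which the asymptotics collide), this decay condition kills the $P$-contribution, giving $c_n = a_n Q_{n-1/2}^{(m-1)/2}(\cosh\tau)$. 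Dually, for $v$ on the complement with $\lim_{\partial\H^+} v = 0$, the boundary of $\H^+$ corresponds to $\tau \to 0^+$; the behavior $P_{n-1/2}^{(m-1)/2}(\cosh\tau) \to $ finite and $Q_{n-1/2}^{(m-1)/2}(\cosh\tau)$ having a logarithmic or power-type singularity as $\tau \to 0$, combined with the vanishing condition on $v$ (taking into account the $(\sinh\tau)^{(1-m)/2}$ factor), forces $\alpha_n = 0$ and yields $c_n = b_n P_{n-1/2}^{(m-1)/2}(\cosh\tau)$.

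For convergence and the rapid decrease of the coefficients, use the standard fact that the Fourier coefficients of a smooth $2\pi$-periodic function are rapidly decreasing, uniformly on compact subsets of the $\tau$ variable: writing $a_n = c_n(\tau)/Q_{n-1/2}^{(m-1)/2}(\cosh\tau)$ at a convenient point and controlling the denominator via its asymptotic lower bounds in $n$ gives the rapid decay of $(a_n)$, and analogously for $(b_n)$. The uniform convergence of the series on compact subintervals $[\tau_1,\tau_2]$ then follows by the Weierstrass $M$-test. Uniqueness of $(a_n)$ is immediate from Fourier uniqueness and the linear independence of the $Q$-modes. For the uniqueness of $(b_n)$ when $\Re m < 1$: given two representations $v = \sum b_n^{(1)}(\cdots) = \sum b_n^{(2)}(\cdots)$, their difference $w$ is a solution of $L_m w = 0$ on all of $\H^+$ (the $P$-modes are regular across $\tau = \tau_0$) with $\lim_{\partial\H^+} w = 0$; Proposition \ref{Unicite} then forces $w \equiv 0$, whence $b_n^{(1)} = b_n^{(2)}$.

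The main obstacle is the asymptotic analysis of $P_{n-1/2}^{(m-1)/2}$ and $Q_{n-1/2}^{(m-1)/2}$ near $\tau = 0$ and $\tau = \infty$ for \emph{complex} parameters $\mu = (m-1)/2$, performed in a way that is uniform in the Fourier index $n$. In particular, one must ensure the selection argument (eliminating $\beta_n$ in one case and $\alpha_n$ in the other) remains valid for every $n \in \Z$, including the small values of $n$ where the leading-order asymptotics of $P$ and $Q$ coalesce or become resonant, and one must extract the rapid decay of $(a_n)$ and $(b_n)$ from the corresponding rapid decay of the Fourier coefficients after division by the Legendre function values.
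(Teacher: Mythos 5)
Your route is genuinely different from the paper's, which is worth noting. The paper Fourier-expands the boundary data $u(\tau_0,\cdot)$ and $v(\tau_0,\cdot)$ on the circle $\tau=\tau_0$, defines candidate extensions $\tilde u,\tilde v$ as the $Q$- and $P$-series with these coefficients (normalised by $Q_{n-\frac12}^{\frac{m-1}2}(\ch\tau_0)$, resp.\ $P_{n-\frac12}^{\frac{m-1}2}(\ch\tau_0)$), establishes normal convergence on compacts from the large-$|n|$ asymptotics of Proposition \ref{estimationPQ}, checks $\lim_{\partial\H^+}\tilde v=0$ for $\Re m<1$, and then identifies $\tilde u=u$ via uniqueness of the Dirichlet problem; uniqueness of $(b_n)$ is deferred to the Riesz-basis section. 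You instead Fourier-expand $u$ and $v$ themselves, show each mode $c_n(\tau)$ solves the associated Legendre ODE, and select $Q$ or $P$ mode by mode from the behaviour at the two ends $\tau\to\infty$, $\tau\to0^+$. Both routes are workable; yours shifts the burden onto a per-mode selection, which the paper avoids entirely, and that is precisely where you need to be more careful.

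Three concrete points. \textbf{(1)} Decay alone at $\tau\to\infty$ does not separate $P$ from $Q$ at $n=0$: for $\nu=-\frac12$ the indicial roots coalesce, and $P_{-\frac12}^{\mu}(\ch\tau)$ and $Q_{-\frac12}^{\mu}(\ch\tau)$ both decay like $e^{-\tau/2}$, one with an extra factor $\tau$. You flag this collision but do not resolve it; the fix is to use that $u$ is bounded at the centre $\tau=\infty$, so $v_m=O(e^{-\tau/2})$ with no $\tau$ factor, which forces $\beta_0=0$. \textbf{(2)} Your description of the $\tau\to0^+$ asymptotics is off for $\Re m<1$: there $P_{n-\frac12}^{\mu}(\ch\tau)\to0$ (like $\tau^{-\mu}$, see formula (\ref{P2})), not to a finite nonzero limit, while $Q_{n-\frac12}^{\mu}(\ch\tau)$ blows up; it is the cancellation of the $\tau^{-\mu}$ in $P$ against the $\sh^{(1-m)/2}\tau$ prefactor that makes the $P$-series vanish at $\partial\H^+$, and the failure of that cancellation in the $Q$-part that kills $\alpha_n$. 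The selection idea is sound but the rates must be stated precisely, as the paper does. \textbf{(3)} Your uniqueness argument for $(b_n)$ (extending the difference $w$ across $\tau=\tau_0$ into $\H^+$ and invoking Proposition \ref{Unicite}) is not justified: the $P$-series has no reason to converge for $\tau>\tau_0$, where the terms grow like $e^{|n|\tau}$. Uniqueness follows more simply by fixing a single $\tau_*<\tau_0$ at which $P_{n-\frac12}^{\mu}(\ch\tau_*)\ne 0$ and invoking uniqueness of Fourier coefficients in $\theta$.
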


\begin{proof}
Indeed, decomposing the function
$$
\theta\mapsto u(\tau_0,\theta)(\ch\tau_0-\cos\theta)^{-m/2}\sh^{m-1\over2}\tau_0
$$
by Fourier series with respect to the variable $\theta$, to yield the Fourier expansion for $u(\tau_0,\cdot)$
$$
u(\tau_0,\theta)=\sh^{1-m\over2}\tau_0(\ch\tau_0-\cos\theta)^{m\over2}\sum_{n=-\infty}^{+\infty}a_n e^{in\theta},
$$
where $a_n\in\ell^2(\Z)$ satisfies
$$
a_n={1\over 2\pi}\int_0^{2\pi}(\ch\tau_0-\cos \theta)^{-m/2}\sh^{m-1\over2}\tau_0\ u(\tau_0,s)e^{-ins}\,ds.
$$
This function is a smooth function of the variable $\theta$, therefore we deduce that the sequence $(a_n)_n$ is rapidly decreasing when $|n|\to+\infty$.
The function
$$\tilde{u}(\tau,\theta)=\sh^{1-m\over2}\tau(\ch\tau-\cos\theta)^{m\over2}\sum_{n=-\infty}^{+\infty}a_n {Q_{n-{1\over2}}^{m-1\over2}(\ch\tau)\over Q_{n-{1\over2}}^{m-1\over2}(\ch\tau_0)}e^{in\theta}$$
coincides  with $u$ on the circle $\tau=\tau_0$.

Moreover, thanks to the Proposition \ref{estimationPQ}, we have when $|n|\to+\infty$,
$$
{Q_{n-{1\over2}}^{m-1\over2}(\ch\tau)\over Q_{n-{1\over2}}^{m-1\over2}(\ch\tau_0)}\sim\sqrt{\sh\tau_0\over\sh\tau}e^{|n|(\tau_0-\tau)}
$$
and this equivalence is uniform in all compact set $[\tau_1,\tau_2]$ with $0<\tau_0\leq \tau_1<\tau_2$.

It follows that the series of functions which defines $\tilde u$ is normally converging on any compact sets $[\tau_1,\tau_2]$ of the disk $\tau\geq\tau_0$. So does same for derivatives with respect to $\tau$ and $\theta$ (which are expressed also with the Associated Legendre functions as mentioned in the section \ref{sec:annexe}).

Particularly, the function $\tilde u$ is well defined on the disk $\tau\geq\tau_0$ and coincides with $u$ on the circle $\tau=\tau_0$. 

Due to the fact that the solution of an elliptic equation is uniquely determined  by its boundary values ​​(this follows from the maximum principle), we deduce that $\tilde u$ the unique axisymmetric potential on the disk $\tau\geq\tau_0$ which coincides with $u$ on the circle $\tau=\tau_0$.

For $v$, the proof is completely similar.

Indeed, decomposing the function
$$
\theta\mapsto v(\tau_0,\theta)(\ch\tau_0-\cos\theta)^{-m/2}\sh^{m-1\over2}\tau_0
$$
by Fourier series with respect to the variable $\theta$, to yield the Fourier expansion for $v(\tau_0,\cdot)$
$$
v(\tau_0,\theta)=\sh^{1-m\over2}\tau_0(\ch\tau_0-\cos\theta)^{m\over2}\sum_{n=-\infty}^{+\infty}b_n e^{in\theta},
$$
where $b_n\in\ell^2(\Z)$ satisfies
$$
b_n={1\over 2\pi}\int_0^{2\pi}(\ch\tau_0-\cos \theta)^{-m/2}\sh^{m-1\over2}\tau_0\ v(\tau_0,s)e^{-ins}\,ds.
$$
This function is a smooth function of the variable $\theta$, therefore we deduce that the sequence $(b_n)_n$ is rapidly decreasing when $|n|\to+\infty$.
The function
$$\tilde{v}(\tau,\theta)=\sh^{1-m\over2}\tau(\ch\tau-\cos\theta)^{m\over2}\sum_{n=-\infty}^{+\infty}b_n {P_{n-{1\over2}}^{m-1\over2}(\ch\tau)\over P_{n-{1\over2}}^{m-1\over2}(\ch\tau_0)}e^{in\theta}$$
coincides with $v$ on the circle $\tau=\tau_0$.

Moreover, thanks to the Proposition \ref{estimationPQ}, we have when $|n|\to+\infty$,
$$
{P_{n-{1\over2}}^{m-1\over2}(\ch\tau)\over P_{n-{1\over2}}^{m-1\over2}(\ch\tau_0)}\sim\sqrt{\sh\tau_0\over\sh\tau}e^{|n|(\tau-\tau_0)}
$$
and this equivalence is uniform in all compact set $[\tau_1,\tau_2]$ with $0<\tau_1<\tau_2\leq\tau_0$.

It follows that the series of functions which defines $\tilde v$ is normally converging on any compact sets $[\tau_1,\tau_2]$ of the complementary of the disc $\tau>\tau_0$. So does same for derivatives with respect to $\tau$ and $\theta$.

Particularly, the function $\tilde v$ is well defined on the complementary of the disk $\tau>\tau_0$ and coincides with $v$ on the circle $\tau=\tau_0$. 

We will show that
$$
\lim_{\tau\to0+}\tilde v=0.
$$
If Re $m<1$, we have when $n\in\N$ and thanks to the formula (\ref{P2})
$$
P_{n-{1\over2}}^{m-1\over2}(\ch\tau)={2^{m-1\over2}\over\sqrt\pi\Gamma\left(1-{m\over2}\right)}\sh^{1-m\over2}\tau\int_0^\pi(\ch\tau+\sh\tau\,\cos\theta)^{n+{m\over2}-1}\sin^{1-m}\theta\,d\theta
$$
then
$$
\lim_{\tau\to0+}P_{n-{1\over2}}^{m-1\over2}(\ch\tau)=0
$$
and in addition, for $n>1-{{\rm Re\,}m\over2}$, we have
$$
\left|P_{n-{1\over2}}^{m-1\over2}(\ch\tau)\right|\leq
{2^{{\rm Re\,}m-1\over2}\sh^{1-{\rm Re\,}m\over2}\tau\over\sqrt\pi\left|\Gamma\left(1-{m\over2}\right)\right|}\int_0^\pi\left(\ch\tau+\sh\tau\cos\theta)\right)^{n+{{\rm Re\,m}\over2}-1}\sin^{1-{\rm Re\,}m}\theta\,d\theta
$$
$$
\leq{2^{{\rm Re\,}m-1\over2}\sh^{1-{\rm Re\,}m\over2}\tau\over\sqrt\pi\left|\Gamma\left(1-{m\over2}\right)\right|}\int_0^\pi\left(\ch\tau+\sh\tau)\right)^{n+{{\rm Re\,m}\over2}-1}\sin^{1-{\rm Re\,}m}\theta\,d\theta
\leq
C_m\sh^{1-{\rm Re\,}m\over2}\tau e^{\left(n+{{\rm Re}\,m\over2}\right)\tau}
$$
thus
$$
\sum_{n>1-{{\rm Re}\,m\over2}}\sup_{\tau\in\left[0,{\tau_0\over2}\right]}\left| b_n{P_{n-{1\over2}}^{m-1\over2}(\ch\tau)\over P_{n-{1\over2}}^{m-1\over2}(\ch\tau_0)}e^{in\theta}\right|<+\infty
$$
by the Proposition \ref{estimationPQ}, we obtain
$$
P_{n-{1\over2}}^{m-1\over2}(\ch\tau_0)\sim_{n\to+\infty}{n^{{m\over2}-1}\over\sqrt{2\pi\,\sh\tau_0}}e^{n\tau_0}.
$$
So, we can deduce that $\lim_{\tau\to0+}\tilde v=0$.

It remains to prove the uniqueness of the previous decomposition where Re$\,m<1$. This will result in the next paragraph which will establish the fact that the family
$$
\mathcal{A}:=\left(\frac{Q_{n-\frac12}^{m-1\over2}(\ch\tau)}{Q_{n-\frac12}^{m-1\over2}(\ch\tau_0)}{(\ch\tau-\cos\theta)^{m/2}\over\sh^{m-1\over2}\tau}e^{in\theta}\right)_{n\in\Z}:=(a_n)_{n\in\Z}
$$
$$
\mathcal{B}:=\left(\frac{P_{n-\frac12}^{m-1\over2}(\ch\tau)}{P_{n-\frac12}^{m-1\over2}(\ch\tau_1)}{(\ch\tau-\cos\theta)^{m/2}\over\sh^{m-1\over2}\tau}e^{in\theta}\right)_{n\in\Z}:=(b_n)_{n\in\Z}$$\\
is a Riesz basis.
\end{proof}

\begin{cor}
The solution of the Dirichlet problem for $L_mu=0$ on $D((a,0),R)$ where $u=\varphi$ on $\d D((a,0),R)$ is given by
$$
u(\tau,\theta)=\sh^{1-m\over2}\tau(\ch\tau-\cos\theta)^{m\over2}\sum_{n=-\infty}^{+\infty}c_n {Q_{n-{1\over2}}^{m-1\over2}(\ch\tau)\over Q_{n-{1\over2}}^{m-1\over2}(\ch\tau_0)}e^{in\theta}
$$
where $\{\tau=\tau_0\}$ corresponds to the circle of center $(a,0)$ and radius $R$ and where
$$
c_n={1\over 2\pi}\int_0^{2\pi}(\ch\tau_0-\cos \theta)^{-m/2}\sh^{m-1\over2}\tau_0\ \varphi(a+R\cos s, R\sin s)e^{-ins}\,ds.
$$
Similarly,
$$
v(\tau,\theta)=\sh^{1-m\over2}\tau(\ch\tau-\cos\theta)^{m\over2}\sum_{n=-\infty}^{+\infty}c_n {P_{n-{1\over2}}^{m-1\over2}(\ch\tau)\over P_{n-{1\over2}}^{m-1\over2}(\ch\tau_0)}e^{in\theta}
$$
is a solution of $L_mv=0$ on $\H^+\setminus D((a,0),R)$, which is equal to $\varphi$ on $\d D((a,0),R)$ where
$$
c_n={1\over 2\pi}\int_0^{2\pi}(\ch\tau_0-\cos \theta)^{-m/2}\sh^{m-1\over2}\tau_0\ \varphi(a+R\cos s, R\sin s)e^{-ins}\,ds.
$$
Moreover, if Re$\,m<1$, then $v$ satisfies $\lim_{\d\H^+}v=0$, and the function $v$ constructed above is the unique solution of the Dirichlet problem
$L_mv=0$ on $\H^+\setminus D((a,0),R)$ which vanishes on $\d\H^+$.
\end{cor}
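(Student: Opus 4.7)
The plan is to invoke Theorem \ref{A} directly and compute the coefficients by Fourier expansion of the boundary data; essentially the corollary is a direct transcription of Theorem \ref{A} to the concrete Dirichlet setting.

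For the interior problem, the disk $D((a,0),R)$ corresponds in bipolar coordinates to $\{\tau\geq\tau_0\}$ with $\tau_0={\rm argch}(a/R)$. Applying Theorem \ref{A} to a solution $u$ of $L_m u=0$ on this disk yields
$$
u(\tau,\theta)=\sh^{(1-m)/2}\tau\,(\ch\tau-\cos\theta)^{m/2}\sum_{n\in\Z}a_n Q_{n-1/2}^{(m-1)/2}(\ch\tau)e^{in\theta}
$$
with uniform convergence on compact subsets of the closed disk. Specializing to $\tau=\tau_0$ and dividing by the prefactor $\sh^{(1-m)/2}\tau_0(\ch\tau_0-\cos\theta)^{m/2}$ reduces the boundary condition $u|_{\tau=\tau_0}=\varphi$ to a standard Fourier-series problem on $[0,2\pi]$: setting $c_n=a_n Q_{n-1/2}^{(m-1)/2}(\ch\tau_0)$, Fourier inversion produces exactly the displayed formula for $c_n$, and dividing by $Q_{n-1/2}^{(m-1)/2}(\ch\tau_0)$ recovers $a_n$, hence the stated formula for $u$.

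For the exterior problem the argument is strictly parallel, using the second family $P_{n-1/2}^{(m-1)/2}$ from Theorem \ref{A}. The boundary-vanishing property $\lim_{\d\H^+}v=0$ is built into that theorem (via the explicit asymptotics of $P_{n-1/2}^{(m-1)/2}(\ch\tau)$ near $\tau=0$), and the same Fourier identification produces the coefficients $c_n$.

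Uniqueness when $\Re m<1$ follows from the uniqueness clause of Theorem \ref{A}: if $v_1,v_2$ both solve the exterior Dirichlet problem with $\lim_{\d\H^+}v_i=0$, then $w=v_1-v_2$ is an exterior $L_m$-solution whose trace on $\{\tau=\tau_0\}$ vanishes and with $\lim_{\d\H^+}w=0$, so its Fourier coefficients in the unique $P$-expansion are all zero, whence $w\equiv 0$. The only delicate point in the whole argument is to ensure that the smoothness of $\varphi$ produces rapidly decaying Fourier coefficients $c_n$ so that the $P$- and $Q$-series converge uniformly on compacta up to (but not beyond) the boundary circle; this is handled exactly as in the proof of Theorem \ref{A} via the asymptotics supplied by Proposition \ref{estimationPQ}. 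Beyond that bookkeeping there is no substantive obstacle.
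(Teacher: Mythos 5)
Your proof is correct and takes essentially the same route the paper intends: the Corollary is stated with no separate proof precisely because it is a direct transcription of Theorem~\ref{A} to concrete Dirichlet boundary data, with the coefficients $c_n$ extracted by Fourier inversion of the rescaled trace $\varphi(\ch\tau_0-\cos\theta)^{-m/2}\sh^{(m-1)/2}\tau_0$ on $\{\tau=\tau_0\}$, the exterior case handled by the $P$-family, and the boundary vanishing and uniqueness for $\Re m<1$ carried over from the corresponding clauses in Theorem~\ref{A}.
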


\section{Riesz basis}

We will prove that the half part of the following family

$$
\left(
{(\ch\tau-\cos\theta)^{m/2}\over\sh^{{m-1\over2}}\tau}
\left\{
\begin{array}{c}
\cos(n\theta)\cr
\sin(n\theta) 
\end{array}\right\}
\left\{
\begin{array}{c}
P_{n-\frac12}^{{m-1\over2}}(\ch\tau)\cr
Q_{n-\frac12}^{{m-1\over2}}(\ch\tau)
\end{array}\right\}\right)_{n\in\Z}
$$
is a basis of solutions on the disk $\tau\geq\tau_1$ and the other half part is a basis of solutions on $\tau\leq\tau_0$, which is the complement on $\H^+$ of a some disk, with $0<\tau_0<\tau_1$. This fact is known for $m=-1$, namely for $\mu=1$. We extend this result for complex values of $m$.

\bigskip

Let us recall the definition of a Riesz basis. $(x_n)_{n\in\N}$ is  a quasi-orthogonal or Riesz sequence of a Hilbert space $X$ if there are two constants $c,\,C>0$ such that for all sequences $(a_n)_{n\in\Z}$ in $\ell^2$, we have
$$
c^2\sum_n|a_n|^2\leq\left\|\sum_n a_n x_n\right\|^2 \leq C^2\sum_n|a_n|^2.
$$
If the family $(x_n)_{n\in\Z}$ is complete, it is a {\it Riesz basis}.
The matrix of scalar product $\{\langle x_i,x_j\rangle\}_{i,j}$ is named Gram matrix associated to $\{x_i\}_i$.
\bigskip

To prove that $\{x_i\}_i$ is a Riesz basis, a convenient characterization with the Gram matrix is the following property :

\bigskip

 {\bf Property (\cite[\text{p. }170]{nikolski}).} {\it A family $\{x_i\}_i$ is a Riesz basis for a some Hilbert space if $\{x_i\}_i$ is complete on this Hilbert space and if the Gram matrix associated to  $\{x_i\}_i$ defines a bounded and invertible operator on $\ell^2(\N)$.\\
}

Let $\mathcal{A}$ and $\mathcal{B}$ the two families of solutions of $L_m[u]=0$, respectively inside the disk $\tau>\tau_0$ and outside the other one $\tau>\tau_1$, with $0<\tau_0<\tau_1$
$$
\mathcal{A}:=\left(\frac{Q_{n-\frac12}^{m-1\over2}(\ch\tau)}{Q_{n-\frac12}^{m-1\over2}(\ch\tau_0)}{(\ch\tau-\cos\theta)^{m/2}\over\sh^{m-1\over2}\tau}e^{in\theta}\right)_{n\in\Z}:=(a_n)_{n\in\Z}
$$
$$
\mathcal{B}:=\left(\frac{P_{n-\frac12}^{m-1\over2}(\ch\tau)}{P_{n-\frac12}^{m-1\over2}(\ch\tau_1)}{(\ch\tau-\cos\theta)^{m/2}\over\sh^{m-1\over2}\tau}e^{in\theta}\right)_{n\in\Z}:=(b_n)_{n\in\Z}$$\\
Let $\mathcal{C}$ the union of the two previous families :
$$
\mathcal{C}:=\left(c_n\right)_{n\in\Z}:=\left(c_{2n}=a_{n}\text{ et }c_{2n+1}=b_{n}\right)_{n\in\Z}
$$
The annulus defined in terms of bipolar coordinates $\left\{0<\tau_0<\tau<\tau_1\right\}$ will be denoted $\mathbb{A}$. 
The space $L^2(\d\A)$ is equipped of the following inner product~: for $f,g\in L^2(\d\A)$,
$$
\langle f,g\rangle=\frac{1}{2\pi}\int_{0}^{2\pi}f(\tau_0,\theta)\overline{g(\tau_0,\theta)}\frac{\sh^{\Re m-1}\tau_0}{(\ch\tau_0-\cos\theta)^{\Re m}}\text{d}\theta
\qquad\qquad\qquad\qquad\qquad\qquad\qquad\qquad$$
$$
\qquad\qquad\qquad\qquad\qquad\qquad+\frac{1}{2\pi}\int_{0}^{2\pi}f(\tau_1,\theta)\overline{g(\tau_1,\theta)}\frac{\sh^{\Re m-1}\tau_1}{(\ch\tau_1-\cos\theta)^{\Re m}}\text{d}\theta.
$$

We have the following proposition :

\bigskip

\begin{prop}
$\mathcal{C}$ is a Riesz basis in the Hilbert space $L^2(\d\mathbb{A})$.
\end{prop}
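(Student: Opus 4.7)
My plan is to apply the Nikolski criterion recalled just above: it suffices to show that $\mathcal{C}$ is complete in $L^2(\d\A)$ and that its Gram operator is bounded and boundedly invertible on $\ell^2(\Z)$. The starting observation is that the weight $\sh^{\Re m-1}\tau_j/(\ch\tau_j-\cos\theta)^{\Re m}$ appearing in the inner product is precisely the modulus-squared reciprocal of the pre-factor $(\ch\tau_j-\cos\theta)^{m/2}/\sh^{(m-1)/2}\tau_j$ shared by every element of $\mathcal{C}$ on the circle $\tau=\tau_j$. Writing this pre-factor as the positive quantity $(\ch\tau_j-\cos\theta)^{\Re m/2}\sh^{(1-\Re m)/2}\tau_j$ multiplied by a unimodular factor $u_j(\theta)$, the system $\{u_j(\theta)e^{in\theta}\}_{n\in\Z}$ is orthonormal on each boundary circle.

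Next, I would compute the Gram matrix explicitly. Setting
$$
\alpha_n:=\frac{Q_{n-1/2}^{(m-1)/2}(\ch\tau_1)}{Q_{n-1/2}^{(m-1)/2}(\ch\tau_0)},\qquad \beta_n:=\frac{P_{n-1/2}^{(m-1)/2}(\ch\tau_0)}{P_{n-1/2}^{(m-1)/2}(\ch\tau_1)},
$$
a direct calculation yields $\langle a_n,a_k\rangle=(1+|\alpha_n|^2)\delta_{nk}$, $\langle b_n,b_k\rangle=(1+|\beta_n|^2)\delta_{nk}$ and $\langle a_n,b_k\rangle=(\alpha_n+\bar\beta_k)\delta_{nk}$, so the Gram matrix is block diagonal in the pairs $(a_n,b_n)$ with $2\times 2$ blocks
$$
G_n=\begin{pmatrix}1+|\alpha_n|^2 & \alpha_n+\bar\beta_n\\ \bar\alpha_n+\beta_n & 1+|\beta_n|^2\end{pmatrix},\qquad \det G_n=|1-\alpha_n\beta_n|^2.
$$
The asymptotics used in Theorem \ref{A} give $\alpha_n,\beta_n=O(e^{-|n|(\tau_1-\tau_0)})$ as $|n|\to\infty$, so $G_n\to I_2$; in particular $\|G_n\|\leq 2+|\alpha_n|^2+|\beta_n|^2$ is uniformly bounded above.

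The main obstacle is to secure a uniform lower bound $\inf_n|1-\alpha_n\beta_n|^2>0$, which simultaneously yields both halves of the Nikolski criterion. The exponential decay of $\alpha_n\beta_n$ handles all but finitely many $n$; for the remaining indices I would invoke the strict monotonicity of $P_{n-1/2}^{(m-1)/2}(\ch\tau)$ (exponentially increasing in $\tau$) and of $Q_{n-1/2}^{(m-1)/2}(\ch\tau)$ (exponentially decreasing), giving $|\alpha_n|<1$ and $|\beta_n|<1$, hence $|\alpha_n\beta_n|<1$. For completeness, if $f\in L^2(\d\A)$ is orthogonal to all $c_n$, expanding $f|_{\tau=\tau_j}$ in the orthonormal system $\{u_j(\theta)e^{in\theta}\}_n$ with coefficients $\gamma_n^{(j)}\in\ell^2$ converts the orthogonality relations into the $2\times 2$ homogeneous systems with matrix $\left(\begin{smallmatrix}1 & \bar\alpha_n\\ \bar\beta_n & 1\end{smallmatrix}\right)$ of nonzero determinant, forcing $\gamma_n^{(0)}=\gamma_n^{(1)}=0$ and thus $f\equiv 0$. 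Applying Nikolski's criterion then finishes the proof.
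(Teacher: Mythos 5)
Your plan and computations coincide with the paper's: both proofs invoke the Nikolski characterization, obtain the same block-diagonal Gram matrix with $2\times2$ blocks $G_n$ (your $G_n$ equals the paper's $M_n$, and your identity $\det G_n=|1-\alpha_n\beta_n|^2$ is exactly what the paper derives), and reduce the problem to the non-vanishing of $1-\alpha_n\beta_n$. You go a step further than the published argument in two places: you explicitly verify completeness in $L^2(\d\A)$ via the $2\times2$ transposed systems, and you explicitly discuss the uniform upper bound $\sup_n\|G_n\|<\infty$; the paper merely states the Nikolski property and then checks only that each $M_n$ is invertible.

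The one step that does not hold up under scrutiny is your uniform lower bound for finitely many small $|n|$. You invoke ``strict monotonicity of $P_{n-1/2}^{(m-1)/2}(\ch\tau)$ and $Q_{n-1/2}^{(m-1)/2}(\ch\tau)$'' to deduce $|\alpha_n|<1$, $|\beta_n|<1$, but for $m\in\C$ the order $\mu=(m-1)/2$ is complex, so these Legendre functions are complex-valued; the paper establishes only their asymptotics as $|n|\to\infty$ (Proposition \ref{estimationPQ}) and nowhere a monotonicity of moduli in $\tau$ valid for all integers $n$. Even for real $m$ this is not automatic when $\mu\geq1/2$ (e.g.\ $P_\nu^\mu$ can vanish). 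Without it your argument that $|\alpha_n\beta_n|<1$ for the remaining indices is unsupported. The paper instead deals with these blocks by supposing $\det M_n=0$, deducing a scalar $\lambda$ with $Q^{(m-1)/2}_{n-1/2}(\ch\tau_j)=\lambda P^{(m-1)/2}_{n-1/2}(\ch\tau_j)$ for $j=0,1$, and eliminating $\lambda$ via the asymptotic equivalences of Proposition \ref{estimationPQ}, forcing $\tau_0=\tau_1$. You should replace the monotonicity appeal with an argument of that type — or, more robustly, invoke the non-vanishing of the Wronskian of $P_\nu^\mu$ and $Q_\nu^\mu$, which directly shows $Q(\ch\tau_0)P(\ch\tau_1)\neq Q(\ch\tau_1)P(\ch\tau_0)$ and closes the gap for every fixed $n$ at once.
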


\begin{proof}
Indeed, in order to build the Gram matrix of $\mathcal{C}$, we first calculate all its scalar products. We obtain for all $n\in\Z$,
\begin{align*}
\langle c_{2n},c_{2n}\rangle&=1+\left|\frac{Q_{n-\frac12}^{m-1\over2}(\ch\tau_1)}{Q_{n-\frac12}^{m-1\over2}(\ch\tau_0)}\right|^2&\\
\langle c_{2n+1},c_{2n+1}\rangle&=1+\left|\frac{P_{n-\frac12}^{m-1\over2}(\ch\tau_0)}{P_{n-\frac12}^{m-1\over2}(\ch\tau_1)}\right|^2&\\
\langle c_{2n},c_{2n+1}\rangle&=\overline{\left(\frac{P_{n-\frac12}^{m-1\over2}(\ch\tau_0)}{P_{n-\frac12}^{m-1\over2}(\ch\tau_1)}\right)}+\frac{Q_{n-\frac12}^{m-1\over2}(\ch\tau_1)}{Q_{n-\frac12}^{m-1\over2}(\ch\tau_0)}&\\
\langle c_{2n+1},c_{2n}\rangle&=\overline{\langle c_{2n},c_{2n+1}\rangle}&
\end{align*}
In all other cases, the inner product is zero, the Gram matrix is diagonal by blocks  and each blocks is expressed as the $2\times 2$ matrix :
$$
\displaystyle
M_n=
\left(\begin{array}{clrr} 
\displaystyle1+\left|\frac{Q_{n-\frac12}^{m-1\over2}(\ch\tau_1)}{Q_{n-\frac12}^{m-1\over2}(\ch\tau_0)}\right|^2& & \displaystyle \overline{\left(\frac{P_{n-\frac12}^{m-1\over2}(\ch\tau_0)}{P_{n-\frac12}^{m-1\over2}(\ch\tau_1)}\right)}+\frac{Q_{n-\frac12}^{m-1\over2}(\ch\tau_1)}{Q_{n-\frac12}^{m-1\over2}(\ch\tau_0)}\cr
\cr
\displaystyle\frac{P_{n-\frac12}^{m-1\over2}(\ch\tau_0)}{P_{n-\frac12}^{m-1\over2}(\ch\tau_1)}+\overline{\left(\frac{Q_{n-\frac12}^{m-1\over2}(\ch\tau_1)}{Q_{n-\frac12}^{m-1\over2}(\ch\tau_0)}\right)}& & 1+\displaystyle\left|\frac{P_{n-\frac12}^{m-1\over2}(\ch\tau_0)}{P_{n-\frac12}^{m-1\over2}(\ch\tau_1)}\right|^2
\end{array}\right)
$$
The Gram matrix $G$ can be written as
$$
\displaystyle
G=
\left(\begin{array}{lllccrrr} 
M_0        & 0      & \cdots &\cdots &\cdots&\cdots&\cdots &\cdots \cr
0          & M_{-1} & 0      &\cdots &\cdots&\cdots&\cdots &\cdots \cr
\vdots     & 0      & M_1    & 0     &\cdots&\cdots&\cdots &\cdots \cr
\vdots     &\ddots  &  0     & M_{-2}& 0    &\cdots&\cdots &\cdots \cr
\vdots     &\ddots  & \ddots & 0     &\ddots&\ddots&\cdots &\cdots \cr
\vdots     &\ddots  & \ddots & \ddots&\ddots&M_{-n}&\ddots &\cdots \cr
\vdots     &\ddots  & \ddots & \ddots&\ddots&\ddots& M_n   &\ddots \cr
\vdots     &\ddots  & \ddots & \ddots&\ddots&\ddots&\ddots&\ddots 
\end{array}\right)
$$
and the determinant of $M_n$ is
$$
\det (M_n)=\left|1-\frac{Q_{n-\frac12}^{m-1\over2}(\ch\tau_1)}{Q_{n-\frac12}^{m-1\over2}(\ch\tau_0)}\frac{P_{n-\frac12}^{m-1\over2}(\ch\tau_0)}{P_{n-\frac12}^{m-1\over2}(\ch\tau_1)}\right|^2
$$

Let's show that  $M_n$ is invertible. Suppose the contrary, if $M_n$ is not invertible, then
$\det(M_n)=0$, which is equivalent to
$$
Q_{n-\frac12}^{m-1\over2}(\ch\tau_1)P_{n-\frac12}^{m-1\over2}(\ch\tau_0)=Q_{n-\frac12}^{m-1\over2}(\ch\tau_0)P_{n-\frac12}^{m-1\over2}(\ch\tau_1).
$$
The previous equality can be written as follows
$$
\left|\begin{array}{clrr}    
   Q_{n-\frac12}^{m-1\over2}(\ch\tau_0) & P_{n-\frac12}^{m-1\over2}(\ch\tau_0)  \\
   Q_{n-\frac12}^{m-1\over2}(\ch\tau_1) & P_{n-\frac12}^{m-1\over2}(\ch\tau_1)
   \end{array}\right|=0, \text{with   $P_{n-\frac12}^{m-1\over2}(\ch\tau_0)$, $Q_{n-\frac12}^{m-1\over2}(\ch\tau_1)\neq0$.}
$$  

Therefore, there is $\lambda\in\C\setminus\{0\}$ (which depends on $m$, $n$, $\tau_0$ and $\tau_1$) such that
$$
\left\{\begin{array}{c}
Q_{n-\frac12}^{m-1\over2}(\ch\tau_0)=\lambda P_{n-\frac12}^{m-1\over2}(\ch\tau_0)\cr
Q_{n-\frac12}^{m-1\over2}(\ch\tau_1)=\lambda P_{n-\frac12}^{m-1\over2}(\ch\tau_1)
\end{array}
\right.
$$
Then, by the asymptotic of Associated Legendre functions (see Proposition \ref{estimationPQ} in the Annex), on the one hand, we have both
$$
\lambda\mathop{\sim}_{n\to+\infty} \pi e^{i\pi {m-1\over2}}e^{-2n\tau_0}
$$
and on the other hand, we have
$$
\lambda\mathop{\sim}_{n\to+\infty}  \pi e^{i\pi {m-1\over2}}e^{-2n\tau_1},
$$
then it implies that $\tau_0=\tau_1$, it is not possible. 

We deduce that the matrix $M_n$ is invertible and this completes the proof.
\end{proof}

\section{Annex : Associated Legendre functions of first and second kind}
\label{sec:annexe}

In this section, we provide the main formulas of integral representation for the Associated Legendre function of the first and the second kind with $z=\ch\tau>1$ (see \cite{abramowitz,lebedev,virchenko}) :

$$
P_\nu^\mu(\ch\tau)={2^{-\nu}\sh^{-\mu}\tau\over\Gamma(-\mu-\nu)\Gamma(\nu+1)}\int_0^\infty(\ch\tau+\ch\theta)^{\mu-\nu-1}\sh^{2\nu+1}\theta\,d\theta
$$
with Re $\nu>-1$ and Re$(\mu+\nu)<0$.

\begin{equation}\label{P2}
P_\nu^\mu(\ch\tau)={2^\mu\sh^{-\mu}\tau\over\sqrt\pi\Gamma\left({1\over2}-\mu\right)}\int_0^\pi{(\ch\tau+\sh\tau\cos\theta)^{\mu+\nu}\over\sin^{2\mu}\theta}d\theta
\end{equation}
with Re $\mu<{1\over2}$.

$$
P_\nu^\mu(\ch\tau)=\sqrt{2\over\pi}{\sh^\mu\tau\over\Gamma\left({1\over2}-\mu\right)}\int_0^\tau{\ch\left[\left(\nu+{1\over2}\right)\theta\right]\over(\ch\tau-\ch\theta)^{\mu+1/2}}d\theta
$$
with Re $\mu<{1\over2}$.

$$
Q_\nu^\mu(\ch\tau)={e^{i\pi\mu}\sqrt\pi\over2^\mu}{\sh^\mu\tau\Gamma(\nu+\mu+1)\over\Gamma(\nu-\mu+1)\Gamma(\mu+1/2)}\int_0^\infty{\sh^{2\mu}\theta\over(\ch\tau+\sh\tau\ch\theta)^{\nu+\mu+1}}d\theta
$$
with Re $\mu>-{1\over2}$, Re$(\mu-\nu-1)<0$ and $\mu+\nu+1\notin\Z^-$.

$$
Q_\nu^\mu(\ch\tau)=\sqrt{\pi\over2}e^{i\pi\mu}{\sh^\mu\tau\over\Gamma\left({1\over2}-\mu\right)}\int_\tau^\infty{e^{-\left(\nu+{1\over2}\right)\theta}\over(\ch\theta-\ch\tau)^{\mu+1/2}}d\theta
$$
with Re $\mu<{1\over2}$ et Re$(\mu+\nu+1)>0$.

$$
Q_\nu^\mu(\ch\tau)=e^{i\pi\mu}2^{-\nu-1}{\Gamma(\nu+\mu+1)\over\Gamma(\nu+1)}\sh^{-\mu}\tau\int_0^\pi(\ch\tau+\cos\theta)^{\mu-\nu-1}\sin^{2\nu+1}\theta\,d\theta
$$
with Re $\nu>-1$ and $\mu+\nu+1\not\in\Z^-$ (see \cite{virchenko} pages 4, 5 and 6).

We have also the following relations satisfied by the Legendre functions  (see \cite{virchenko} page 6 and \cite{abramowitz}, formula 8.2.2)

$$
P_\nu^\mu=P_{-\nu-1}^\mu.
$$
$$
Q_{-\nu-1}^\mu(z)={-\pi e^{i\pi\mu}\cos(\pi\nu) P_\nu^\mu+\sin[\pi(\nu+\mu)]Q_\nu^\mu\over\sin[\pi(\nu-\mu)]}
$$
for $\nu-\mu\not\in\Z$.
(in particular for $\nu=n-{1\over2}$ with $n\in\Z$, we have
$$
Q_{-\nu-1}^\mu=Q_\nu^\mu
$$
for all $\mu\in\C$),
$$
e^{i\pi\mu}\Gamma(\nu+\mu+1)Q_\nu^{-\mu}=e^{-i\pi\mu}\Gamma(\nu-\mu+1)Q_\nu^\mu,
$$
$$
P_\nu^{-\mu}={\Gamma(\nu-\mu+1)\over\Gamma(\nu+\mu+1)}\left[P_\nu^\mu-{2\over\pi}e^{-i\pi\mu}\sin(\pi\mu)Q_\nu^\mu\right],
$$

In addition, we have the Whipple formulas connecting the associated Legendre functions of first and second kind (see \cite{virchenko} page 6)
$$
Q_\nu^\mu(\ch\tau)=e^{i\pi\mu}\sqrt{\pi\over2}{\Gamma(\mu+\nu+1)\over\sqrt{\sh\tau}}P_{-\mu-{1\over2}}^{-\nu-{1\over2}}(\coth\tau),
$$
$$
P_\nu^\mu(\ch\tau)={i e^{i\pi\nu}\over\Gamma(-\nu-\mu)}\sqrt{2\over\pi}{1\over\sqrt{\sh\tau}}Q_{-\mu-{1\over2}}^{-\nu-{1\over2}}(\coth\tau).
$$
We also have the recursion formulas  (see \cite{virchenko} pages 6 et 7)
$$
P_\nu^{\mu+1}(\ch\tau)={(\nu-\mu)\ch\tau\, P_\nu^\mu(\ch\tau)-(\nu+\mu)P_{\nu-1}^\mu(\ch\tau)\over\sh\tau}
$$
$$
(\nu-\mu+1)P_{\nu+1}^\mu(\ch\tau)=(2\nu+1)\ch\tau\, P_\nu^\mu(\ch\tau)-(\nu+\mu)P_{\nu-1}^\mu(\ch\tau).
$$
$$
(z^2-1){d P_\nu^\mu(z)\over dz}=(\nu+\mu)(\nu-\mu+1)(z^2-1)^{1/2}P_\nu^{\mu-1}(z)-\mu zP_\nu^\mu(z).
$$
$$
(z^2-1){d P_\nu^\mu(z)\over dz}=\nu z P_\nu^\mu(z)-(\nu+\mu) P_{\nu-1}^\mu(z).
$$
All of these formulas are used to explicitly calculate the values ​​of $P_\nu^\mu(\ch\tau)$ and $Q_\nu^\mu(\ch\tau)$ for all $\tau>0$ and $(\mu,\nu)\in\C^2$.

If $\mu$ and $\tau$ are fixed, the following proposition collects the behavior of Associated Legendre functions of the first and second kind when $\nu=n-{1\over2}$ with $n\in\Z$ and $|n|\to+\infty$.

\begin{prop}\label{estimationPQ}
We fix $\tau>0$ and $\mu\in\C$. Then if $\nu=n-{1\over2}$ with $n\in\Z$, we have :
$$
\hbox{when }\nu\to+\infty,\qquad P_\nu^\mu(\ch\tau)\sim {e^{\tau/2}\over\sqrt{2\pi\,\sh\tau}}\nu^{\mu-1/2}e^{\tau\nu}
$$
$$
\hbox{when }\nu\to-\infty,\qquad P_\nu^\mu(\ch\tau)\sim {e^{-\tau/2}\over\sqrt{2\pi\,\sh\tau}}(-\nu)^{\mu-1/2}e^{-\tau\nu}
$$
$$
\hbox{when }\nu\to+\infty,\qquad Q_\nu^\mu(\ch\tau)\sim e^{i\pi\mu}e^{-\tau/2}\sqrt{\pi\over2\,\sh\tau}\nu^{\mu-1/2} e^{-\tau\nu}
$$
$$
\hbox{when }\nu\to-\infty,\qquad Q_\nu^\mu(\ch\tau)\sim e^{i\pi\mu}e^{\tau/2}\sqrt{\pi\over2\,\sh\tau}(-\nu)^{\mu-1/2} e^{\tau\nu}.
$$
These equivalences are locally uniform with respect to the variable $\tau$, that is to say uniform on all interval $[\tau_0,\tau_1]$ with $0<\tau_0<\tau_1$.
\end{prop}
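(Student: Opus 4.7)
The plan is to apply Laplace's method to the integral representations collected in the appendix, then propagate the asymptotics via the symmetry relations in $\mu$ and $\nu$. I treat $\nu=n-\frac12\to+\infty$ first, with $\mathrm{Re}\,\mu<\frac12$ so that the relevant representations are valid.

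For $Q_\nu^\mu(\ch\tau)$, I would start from
$$
Q_\nu^\mu(\ch\tau)=\sqrt{\tfrac\pi2}\,e^{i\pi\mu}\,\frac{\sh^\mu\tau}{\Gamma(\tfrac12-\mu)}\int_\tau^\infty\frac{e^{-(\nu+\frac12)\theta}}{(\ch\theta-\ch\tau)^{\mu+1/2}}\,d\theta.
$$
The exponential weight concentrates the mass near $\theta=\tau$. Substituting $\theta=\tau+s$ and using $\ch(\tau+s)-\ch\tau=\sh\tau\cdot s+O(s^2)$, then rescaling $u=(\nu+\tfrac12)s$ and applying dominated convergence on a piece $s\in[0,\delta]$, I recover the leading term $e^{-(\nu+1/2)\tau}(\sh\tau)^{-\mu-1/2}\Gamma(\tfrac12-\mu)(\nu+\tfrac12)^{\mu-1/2}$; the tail $s\geq\delta$ is exponentially smaller. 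Collecting prefactors gives
$Q_\nu^\mu(\ch\tau)\sim e^{i\pi\mu}e^{-\tau/2}\sqrt{\pi/(2\sh\tau)}\,\nu^{\mu-1/2}e^{-\tau\nu}$, as claimed.

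For $P_\nu^\mu(\ch\tau)$ I would use
$$
P_\nu^\mu(\ch\tau)=\frac{2^\mu\sh^{-\mu}\tau}{\sqrt\pi\,\Gamma(\tfrac12-\mu)}\int_0^\pi\frac{(\ch\tau+\sh\tau\cos\theta)^{\mu+\nu}}{\sin^{2\mu}\theta}\,d\theta.
$$
The log-integrand $\log(\ch\tau+\sh\tau\cos\theta)$ attains its maximum $\tau$ at $\theta=0$ with expansion $\ch\tau+\sh\tau\cos\theta=e^\tau(1-a\theta^2+O(\theta^4))$, where $a=e^{-\tau}\sh\tau/2$. Thus $(\ch\tau+\sh\tau\cos\theta)^{\mu+\nu}\sim e^{\tau(\mu+\nu)}e^{-\nu a\theta^2}$ near the origin. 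After the change of variables $u=\nu a\theta^2$, the integral becomes a standard gamma integral $\frac12(a\nu)^{\mu-1/2}\Gamma(\tfrac12-\mu)$ up to exponentially small tail, and the prefactors combine (using $a^{\mu-1/2}=2^{1/2-\mu}e^{-\tau(\mu-1/2)}\sh^{\mu-1/2}\tau$) to give exactly $\dfrac{e^{\tau/2}}{\sqrt{2\pi\sh\tau}}\,\nu^{\mu-1/2}e^{\tau\nu}$.

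To cover all $\mu\in\C$, I would invoke the reflection identities from the appendix: the relation $e^{i\pi\mu}\Gamma(\nu+\mu+1)Q_\nu^{-\mu}=e^{-i\pi\mu}\Gamma(\nu-\mu+1)Q_\nu^\mu$ together with $\Gamma(\nu+\mu+1)/\Gamma(\nu-\mu+1)\sim\nu^{2\mu}$ extends the $Q$-asymptotic to $\mathrm{Re}\,\mu>-\tfrac12$, overlapping with the direct range; for $P$, the formula $P_\nu^{-\mu}=\frac{\Gamma(\nu-\mu+1)}{\Gamma(\nu+\mu+1)}\bigl[P_\nu^\mu-\frac{2}{\pi}e^{-i\pi\mu}\sin(\pi\mu)Q_\nu^\mu\bigr]$ works the same way, the $Q_\nu^\mu$ contribution being of order $e^{-\tau\nu}$ and hence negligible next to the $P_\nu^\mu$ term of order $e^{\tau\nu}$. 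Finally, the asymptotics as $\nu\to-\infty$ follow from the index-reflection identities $P_\nu^\mu=P_{-\nu-1}^\mu$ and $Q_{-\nu-1}^\mu=Q_\nu^\mu$, the latter holding precisely because $\cos\pi\nu=0$ when $\nu=n-\tfrac12$.

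The main obstacle will be making the Laplace estimates rigorous and \emph{uniform} in $\tau$ on a compact sub-interval $[\tau_0,\tau_1]\subset(0,+\infty)$: one checks that the coefficient $a=e^{-\tau}\sh\tau/2$ is bounded above and away from zero on $[\tau_0,\tau_1]$, that the Taylor remainders are controlled uniformly, and that $\sup_{\theta\in[\delta,\pi]}\log(\ch\tau+\sh\tau\cos\theta)<\tau-c(\delta)$ for some $c(\delta)>0$ uniform in $\tau\in[\tau_0,\tau_1]$, so the tail is exponentially dominated uniformly. This yields the locally uniform equivalences asserted in the proposition.
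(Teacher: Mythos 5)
Your proposal is correct but takes a genuinely different route from the paper. The paper leans on black-box results from the cited reference of Virchenko: it starts from the known uniform asymptotic formula
$$
P_\nu^\mu(\ch\tau)=\frac{\Gamma(\nu+1)}{\Gamma(\nu-\mu+1)}\frac{1}{\sqrt{2\pi(\nu+1)\sh\tau}}\Bigl[e^{(\nu+\frac12)\tau}+e^{-\pi i(\mu-\frac12)-(\nu+\frac12)\tau}\Bigr]\Bigl[1+\O\bigl(\tfrac1\nu\bigr)\Bigr],
$$
applies Stirling to $\Gamma(\nu+1)/\Gamma(\nu-\mu+1)\sim\nu^\mu$, quotes a companion formula for $Q_\nu^\mu$, and then deduces the $\nu\to-\infty$ cases from the same index-reflection identities $P_\nu^\mu=P_{-\nu-1}^\mu$ and $Q_{-\nu-1}^\mu=Q_\nu^\mu$ you use; the locally uniform character is likewise outsourced to cited estimates of hypergeometric functions. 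You instead derive the leading asymptotics directly by Laplace's method on the appendix integral representations, first for $\Re\mu<\tfrac12$, and then propagate to all $\mu$ via the $\mu\to-\mu$ reflection formulas combined with $\Gamma(\nu+\mu+1)/\Gamma(\nu-\mu+1)\sim\nu^{2\mu}$. Your route is more self-contained and, as a bonus, makes the mechanism behind local uniformity in $\tau$ visible (the Laplace coefficient $a=e^{-\tau}\sh\tau/2$ is bounded above and below on $[\tau_0,\tau_1]$, and the exponential gap of the tail is uniform there), whereas the paper's one-line appeal to references is shorter but less transparent. The price you pay is that the Laplace estimates, including the control of the $(1+O(s))^{-\mu-1/2}$ Jacobian factor and the integrable singularity at $s=0$ when $0<\Re(\mu+\tfrac12)<1$, and the exponentially small tails, need to be carried out carefully and uniformly — work you explicitly flag rather than perform. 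Both approaches are sound; yours is an acceptable and arguably more instructive alternative to the paper's citation-based proof.
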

{\begin{proof}
If $\nu=n-{1\over2}$ with $n\in\N$ (see \cite{virchenko} page 48), we have
$$
P_\nu^\mu(\ch\tau)={\Gamma(\nu+1)\over\Gamma(\nu-\mu+1)}{1\over\sqrt{2\pi(\nu+1)\sh\tau}}\left[e^{\left(\nu+{1\over2}\right)\tau}+e^{-\pi i\left(\mu-{1\over2}\right)-\left(\nu+{1\over2}\right)\tau}\right]\left[1+\O\left({1\over\nu}\right)\right].
$$
A straightforward application of the Stirling formula shows that when $\nu\to+\infty$
$$
{\Gamma(\nu+1)\over\Gamma(\nu-\mu+1)}\sim{\sqrt{2\pi}\nu^{\nu+1/2}e^{-\nu}\over\sqrt{2\pi}(\nu-\mu)^{\nu-\mu+1/2}e^{-\nu+\mu}}
=\left({\nu\over\nu-\mu}\right)^{\nu+1/2}(\nu-\mu)^\mu e^{-\mu}
$$
$$
=(\nu-\mu)^\mu e^{-\mu}\exp\left(-\left(\nu+{1\over2}\right)\ln\left(1-{\mu\over\nu}\right)\right)\sim\nu^\mu
$$
consequently,
$$
P_\nu^\mu(\ch\tau)\sim\nu^\mu{1\over\sqrt{2\pi\nu\sh\tau}}{e^{\tau\over2}}e^{\tau\nu}={e^{\tau/2}\over\sqrt{2\pi\sh\tau}}\nu^{\mu-1/2}e^{\tau\nu},
$$
which gives us the first estimate. 

The second one is obtained directly thanks to the relation $P_\nu^\mu=P_{-\nu-1}^\mu$.

The third estimate follows directly from the formula (8.3) of \cite{virchenko} :
$$
Q_\nu^\mu(\ch\tau)\sim\sqrt{\pi\over2\,\sh\tau}\nu^{\mu-1/2}e^{i\pi\mu}e^{-\tau(\nu+1/2)}
$$
and the last estimation arises from the fact that for $\nu=n-{1\over2}$ with $n\in\Z$, we have
$$
Q_{-\nu-1}^\mu=Q_\nu^\mu.
$$
The locally uniform character of these equivalences come from explicits expressions of $P_\nu^\mu$ and $Q_\nu^\mu$ in terms of hypergeometric  functions (\cite{erdelyi1953}, tables pages 124-138) and estimations of these special functions (always locally uniform with respect to their parameters (\cite{virchenko}, pages 178-182). 
\end{proof}

\bibliographystyle{plain}
\bibliography{biblio}

\end{document}